\newcommand{\ee}{\varepsilon}
\newcommand{\BUC}{{\mathrm{BUC}}}
\DeclareMathOperator{\N}{N}
\newcommand{\Rd}{{\mathbb R^d}}
\newcommand{\diver}{\nabla \cdot}
\DeclareMathOperator{\supp}{supp}
\newcommand{\vv}[1]{\mathbf{#1}}
\newcommand*\diff{\mathop{}\!\mathrm{d}}
\newtheorem{theorem}{Theorem}[section]
\newtheorem{proposition}[theorem]{Proposition}%
\newtheorem{corollary}[theorem]{Corollary}%
\newtheorem{lemma}[theorem]{Lemma}%
\theoremstyle{definition}
\newtheorem{definition}[theorem]{Definition}%
\newtheorem{remark}[theorem]{Remark}%
\numberwithin{equation}{section}
\renewcommand*{\@fnsymbol}[1]{\ensuremath{\ifcase#1\or \star \or \dagger\or \ddagger\or
		\mathsection\or \mathparagraph\or \|\or **\or \dagger\dagger
		\or \ddagger\ddagger \else\@ctrerr\fi}}
\begin{document}
\title{A fast regularisation of a Newtonian vortex equation}

\author{José A. Carrillo\thanks{Mathematical Institute, University of Oxford, Oxford OX2 6GG, UK. Email: \href{mailto:carrillo@maths.ox.ac.uk}{carrillo@maths.ox.ac.uk}} %
\and David Gómez-Castro\thanks{Mathematical Institute, University of Oxford, Oxford OX2 6GG, UK. Email: \href{mailto:gomezcastro@maths.ox.ac.uk}{gomezcastro@maths.ox.ac.uk}} \thanks{Instituto de Matemática Interdisciplinar, Universidad Complutense de Madrid, Spain.} \and %
Juan Luis Vázquez\thanks{Departamento de Matemáticas, Universidad Autónoma de Madrid, Spain. Email: \href{mailto:juanluis.vazquez@uam.es}{juanluis.vazquez@uam.es}}}
\maketitle

\begin{abstract}
We consider equations of the form $	u_t = \diver ( \gamma(u)  \nabla \N(u))$,
where $\N$ is the Newtonian potential (inverse of the Laplacian) posed in the whole space $\Rd$, and
$\gamma(u)$ is the mobility. For linear mobility, $\gamma(u)=u$, the equation and some variations have been proposed as a model for superconductivity or superfluidity. In that case the theory  leads to uniqueness of bounded weak solutions having the property of compact space support, and in particular there is a special solution in the form of a disk vortex of constant intensity in space $u=c_1t^{-1}$ supported in a ball that spreads in time like $c_2t^{1/d}$, thus showing a discontinuous leading front.

 In this paper we propose the model with sublinear mobility $\gamma(u)=u^\alpha$, with $0<\alpha<1$, and prove that nonnegative solutions recover positivity everywhere, and moreover display a fat tail at infinity. The model acts in many ways as a regularization of the previous one. In particular, we find that the equivalent of the previous vortex is an explicit self-similar solution decaying in time like $u=O(t^{-1/\alpha})$ with a space tail with size $u=O(|x|^{- d/(1-\alpha)})$. We restrict the analysis to radial solutions and construct solutions by the method of characteristics. We introduce the mass function, which solves an unusual variation of Burger's equation, and plays an important role in the analysis. We show well-posedness in the sense of viscosity solutions. We also construct numerical finite-difference convergent schemes.
\end{abstract}

\

\noindent\textbf{Keywords}: nonlinear mobility equations, conservation laws, viscosity solutions, shock conditions, regularisation.

\smallskip

\noindent \textbf{2010  Mathematics Subject Classification:} 35L65, 35D40, 65M25

\setcounter{tocdepth}{2}

\

\section{Introduction}
\label{sec.intro}

We will study equations on the form
\begin{equation}\label{oureq}
		u_t = \diver ( \gamma(u)  \nabla \N(u))
\end{equation}
where $\N$ is the Newtonian potential
\begin{equation*}
	\N(u(t,\cdot)) = \int_{\mathbb R^d} \mathbb G (x,y) u(t,y) \diff y
\end{equation*}
for $\mathbb G$ the Green kernel, and $\gamma(u)$ is called the mobility. For linear mobility $\gamma(u)=u$,
the equation has  been studied by a number of authors as a model for superconductivity or superfluidity, cf. Lin and Zhang \cite{Lin+Zhang2000}, Ambrosio, Mainini, and Serfaty \cite{Ambrosio+Mainini+Serfaty2011,Ambrosio+Serfaty2008}, Bertozzi, Laurent, and L\'eger \cite{BLL12}, Serfaty and Vazquez \cite{Serfaty+Vazquez2014}. The theory of the last paper leads to uniqueness of bounded weak solutions having the property of compact support, and in particular to a special solution in the form of a disk vortex of constant intensity in space  that decays in time like
$c_1\,t^{-1}$ and is supported in a ball that spreads with radius $R=c_2 t^{1/d}$, thus showing a discontinuous leading front, i.e.\ $u=c_1t^{-1} \chi_{B(0, c_2 t^{1 / d})}$.
This vortex solution is an asymptotic attractor for a large class of solutions. Moreover, in dimension 2, the equation is directly related to the Chapman-Rubinstein-Schatzman \cite{Chapman1996} mean field model of superconductivity and to E's model of superfluidity \cite{E1994}, which would correspond rather to the equation $u_t = \diver (|u|\nabla p).$

On the other hand, we can formally understand \eqref{oureq} as a gradient flow equation with the nonlinear mobility $\gamma(u)$ by rewriting it as
$$
u_t = \diver \left( \gamma(u)  \nabla \frac{\delta \mathcal{F}}{\delta u}\right) \qquad \mbox{with }  \frac{\delta \mathcal{F}}{\delta u}=\N(u),
$$
and the associated energy functional
$$
\mathcal{F}(u)=\frac12 \int_{\mathbb R^d} \N(u) u \diff x\, .
$$
The transport distance associated to this nonlinear continuity equation was shown in \cite{DNS09} to be well defined for nonlinear mobilities of the form $\gamma(u)=u^\alpha$, $0<\alpha <1$, and for general concave nonlinear mobilities, while transport distances associated with convex nonlinear mobilities are not well-defined in general. Gradient flows associated to homogeneous concave mobilities were studied subsequently in \cite{CLSS10}. This interesting line of research will not be pursued further in this paper.

\paragraph{Statement of the problem and outline of results.}
In this paper we study the problem with nonlinear mobility $\gamma(u)=u^\alpha$,  with $0<\alpha <1$.
The presence of the sublinear mobility leads to a number of results that strongly depart from the linear mobility case, and at the time implies the need for significant new tools to develop the theory. In particular, we show that the sublinear nonlinearity eliminates the compact support effect of the typical vortex solutions, and leads to profiles with fat tails at infinity (of the space variable). They can be interpreted as a diffused vortex. Moreover, the tails depend on a very precise way of the exponent $\alpha<1$.
The case $\alpha \ge 1$ leads to a completely different behaviour: compactly supported self-similar solutions (see \cite{Carrillo+GC+Vazquez2020}).
We write the problem
 \begin{equation}
	\tag{P}
	\label{eq:main equation}
	\begin{dcases}
		u_t = \diver ( u^\alpha   \nabla v) & (0,+\infty) \times \mathbb R^d \\
		-\Delta v = u & (0,+\infty) \times \mathbb R^d \\
		u(t,x), v(t,x) \to 0 & |x| \to +\infty \\
		u = u_0 & t = 0.
	\end{dcases}
\end{equation}
in all space dimensions $d\ge 1$.
We assume that $u_0 \ge 0$. We will show that this implies that $u \ge 0$.

In the first part of this paper, Sections \ref{sec:explicit solutions}-\ref{sec.gendata}, we will focus on constructing radial weak solutions by characteristics, introducing rarefaction fans and shocks as appropriate. This will sometimes lead to the existence of multiple weak solutions for certain initial data.
The second part, Sections \ref{sec.exist}-\ref{sec.fds}, deals with the selection of the stable solutions in the sense of vanishing viscosity and the notion of viscosity solution of the mass equation present below. This allows for a well-posedness theory of the equation \eqref{eq:main equation} for radial solutions. We now explain in detail the main results of each section.

We begin our study in Section \ref{sec:explicit solutions} by looking for relevant  explicit solutions. Notably, we find a selfsimilar solution with finite mass that will be the equivalent in our model of the vortex solution
mentioned above for linear mobility. This solution is explicit, radially symmetric, and it has power decay rate in space for every $t>0$ while it decays like $O(t^{-1/\alpha})$ in sup norm.
In particular, we will show that the self-similar solution of total mass $M$ is given by
\begin{equation}
	\label{eq:self-sim solution of mass M}
	U_M(t,x) = t^{-\frac 1 \alpha} \left( \alpha +
	\left( \frac{\omega_d |x|^d t^{-\frac 1 {\alpha}} } { \alpha M} \right)^{\frac {\alpha} {1-\alpha }}
	\right)^{-1/\alpha }.
\end{equation}
Letting $\alpha\to 1$ we get the compactly supported vortex created by the equation with linear mobility.

In the first part of the paper we are particularly interested in radial solutions for which a very detailed description can be obtained. For these solutions we can study the mass function, which is introduced in Section \ref{sec.mass} as
\begin{equation}
	m(t,r) = \int_{B_r} u(t,x) dx
\end{equation}
which is the solution of a Hamilton-Jacobi type equation when written in the volume variable $\rho = \omega_d r^d$:
\begin{equation}\label{eq.mrho}
	m_t + m (m_\rho)^{\alpha}  = 0.
\end{equation}
This equation has a reminiscence to Burger's equation. Indeed, it is a very unusual version of it that needs a careful development. To remark that our study is dimension independent. We recall that $m_\rho=u\ge0$.

Equation \eqref{eq.mrho} will be studied by the method of characteristics, following \cite{Evans1998}. This is done in Subsection \ref{ssect.gch} and we obtain solutions by gluing characteristic lines
(see \Cref{thm:characteristics})
.
In particular, we recover
 again
the selfsimilar solution
\eqref{eq:self-sim solution of mass M}.
We devote \Cref{sec.radial}
to show that the method of characteristics
works well when $u_0(r)$ is radially symmetric and decreasing.
First, in \Cref{ssec:characteristics decreasing continuous} we discuss the case where $u_0$ is non-increasing and continuous, and the characteristics fill the space. Then, in \Cref{ssec:characteristics decreasing discontinuous} we study the case in which $u_0$ is non-increasing and discontinuous, where characteristics leave gaps. One way to fill these gaps is the introduction of a rarefaction fan, which is presented in \Cref{ssect.rf}.
This important topic is treated in detail. Then we derive mass conservation
(\Cref{prop:characteristics mass conservation}),
comparison principle
(\Cref{thm:comparison principle characteristics}),
and asymptotic behaviour for such solutions
(as $t \to \infty$ in
\Cref{thm:asymptotics t non-increasing u_0,lem:relative error with self-similar power alpha} and for fixed $t$ as $|x| \to \infty$ in \Cref{thm:non-decreasing analysis of the tails}).

Next, we enlarge the class of initial data in Section \ref{sec.gendata}, still radially symmetric,
but only piece-wise decreasing.
Then shocks may appear, and we need  Rankine-Hugoniot conditions
(given by \eqref{eq:RH})
to select the correct shock solutions.
In fact, we give in \Cref{sec:spurious}
an example of non-uniqueness of weak solutions: the square functions.

We then address the issue of constructing solutions for a large class of initial data and selecting the physical ones. We devote two sections to construct viscosity approximations, as is customary to do for similar problems. In \Cref{sec.exist} we will consider a regularised problem with a viscous term $\ee \Delta u$:
\begin{equation}
	\tag{P$_\varepsilon$}
	\label{eq:PDE regularised}
	\begin{dcases}
		u_t = \diver ( (\ee + u_+)^\alpha \nabla v) + \ee \Delta u & (0,+\infty)  \times \mathbb R^d \\
		-\Delta v = u & (0,+\infty) \times \mathbb R^d \\
		u(t,x), v(t,x) \to 0 & |x| \to +\infty \\
		u = u_0 & t = 0.
	\end{dcases}
\end{equation}
The limit of this problem as $\ee \to 0$ is called the vanishing viscosity limit.
We prove that, for general (non-radial) initial data, \eqref{eq:PDE regularised} is well-posed (\Cref{thm:vanishing viscosity well-posedness}), has suitable $L^p$ estimates (\Cref{prop:PDE regularised estimates}), its mass satisfies  \eqref{eq:PDE regularised mass} similar to \eqref{eq.mrho} and it converges in the sense of weak solutions (\Cref{thm:vanishing viscosity ee to 0}).
Passing to the limit $\ee\to 0$ thanks to suitable a priori estimates we get weak solutions for quite general,
not necessarily radial data.

We still have the problem of uniqueness that we solve for radially symmetric data by passing to the limit in the above approximation, but
now in the mass variable.
In \Cref{sec.viscos} we obtain
 obtain a unique viscosity solution in the sense of Crandall-Lions, \cite{Crandall1983}.
We prove that bounded and uniformly continuous viscosity solutions of \eqref{eq.mrho} satisfy a comparison principle (\Cref{thm:comparison principle m}) and can be recovered as the limit of the solutions of \eqref{eq:PDE regularised mass} (\Cref{thm:converges mass PDE regularised}). This allows us to state the well-posedness in \Cref{thm:viscosity mass well-posedness}. We conclude the section by discussing the asymptotic behaviour of viscosity solutions in \Cref{thm:asymptotics mass}.

Finally, we devote Section \ref{sec.fds} to construct numerical finite-difference convergent schemes for the mass function
using viscosity-solution techniques.
Numerical calculations illustrate the main results of the paper at different stages. We close the paper with some comments on extensions and open problems in
\Cref{sec.extop}.

\section{Explicit solutions}
\label{sec:explicit solutions}

In this section we construct two families of explicit  solutions for \eqref{eq:main equation}.

\subsection{Constant in space solutions and Friendly Giant}

We look for ODE type solutions for \eqref{eq:main equation}. Indeed, for initial  constant data $u_0(x)$ we may look for supersolutions
$
	 u(t,x) = g(t)
$.
Writing the equation
\begin{equation*}
	u_t = \diver( u^\alpha \nabla v ) = \nabla u^\alpha \nabla v + u^\alpha \Delta v = \nabla u^\alpha \nabla v - u^{\alpha+1}.
\end{equation*}
Hence,
\begin{equation*}
	 g' = - g^{\alpha+1}.
\end{equation*}
Therefore, we have the friendly giant solution:
\begin{equation*}
	 g (t) = (u_0^{-\alpha} + \alpha t)^{-1/\alpha}.
\end{equation*}
Assuming that a comparison principle works, this solution will allow below to show that
\begin{equation}
	\overline u (t,x) = (\|u_0\|_{L^\infty}^{-\alpha} + \alpha t)^{-1/\alpha}
\end{equation}
is a supersolution.

\paragraph{Global supersolution}
Even, as $\| u_0 \|_{L^\infty} \to +\infty$ we have the so-called Friendly Giant \begin{equation}
	{\widetilde u} (t) = (\alpha t)^{-1/\alpha}.
\end{equation}
Even if these solutions are not in $L^1$, comparison works for any viscosity solution or for any limit of approximate classical solutions like the ones obtained by the vanishing viscosity method.

\subsection{Self-similar solutions}
\label{sec:selfsimilar via equation}
Next, we establish the existence of the important class of selfsimilar solutions, which take the form
\begin{equation}
	\label{eq:solution is self-similar form}
	U(t,x)= t^{-\gamma} F(|x|\,t^{-\beta}).
\end{equation}
In order to satisfy \eqref{eq:main equation} and conserve mass we must take
$$
\gamma=\frac1{\alpha }, \quad \beta=\frac1{\alpha d}.
$$

\paragraph{A PDE in self-similar variables.} Then the equation for the profile $U(t,x) = t^{-\gamma} F(|y|)$ where $y = x t^{-{\beta}}$ is
$$
-\frac 1{\alpha d} \nabla \cdot (y\,F )=\nabla\cdot(F ^\alpha \nabla \N(F )).
$$
Eliminating the nablas and rearranging, we get the fractional stationary equation
$$
yF ^{1-\alpha }=-\alpha d \nabla \N(F ).
$$
Applying the divergence operator to the latter equation, we get
\begin{equation}\label{eq.sss1}
\nabla\cdot (yF ^{1-\alpha })=-\alpha d \Delta \N(F )= \alpha d \,F
\end{equation}
since $\N$ is the inverse of $-\Delta$ in $\mathbb R^d$.

\paragraph{An ODE for $F$ in radial coordinates.} In order to solve this equation we put $w(r)=F(r)^{1-\alpha }$ so that $F=w^p$ with $p=1/(1-\alpha )>1$. Notice that $p-1=\alpha /(1-\alpha )$. Also, $p\to \infty $ as $\alpha \to 1^-$ and for $\alpha =1/2$ we get $p=2$. We also assume that $w$ is a radial function $w=w(r)$. We get
\begin{equation}\label{eq.ODE}
rw'(r)=\alpha d w(r)^p-dw(r)=dw(r)\left( \alpha w(r)^{p-1}-1\right).
\end{equation}
There is an equilibrium point $w_*=(1/\alpha )^{1/(p-1)}=\alpha ^{-(1-\alpha )/\alpha }$ (for $\alpha =1/2$ we get $w_*=2$). This gives rise to the constant solution that is also found  in the limit case of linear mobility. But in the case of linear mobility we have $\alpha =1$, $p=1$ and there is not preferred critical value for \eqref{eq.ODE}.

\medskip

 Actually, the existence of the critical value for $0<\alpha <1$ allows us to construct solutions in the following region $D=\{(r,v): r>0, 0<w<w_*\}$ of the ODE phase plane. It is clear that $D$ is an invariant region; it is  bounded by
the solutions $w=0$ and $w=w_*$ from below and above.

\paragraph{Quantitative analysis of \eqref{eq.ODE}.} An asymptotic analysis as $r\to \infty$ gives for all possible solutions
$rw'(r)\sim -dw(r)$ so that $w(r)\sim r^{-d}$ and the original profile $v$ behaves as
$$
F(r)\sim  r^{-dp} \qquad \mbox{as \ } r\to\infty.
$$
Since $dp=d/(1-\alpha )>d$ this tail is integrable. As for the limit $r\to 0$ the only admissible option is to enter the corner point so that
 $$
F(0^+)  = w_*^{p}=\alpha ^{-\alpha }.
$$
Hence, all the solutions in this region will have the same behaviour at $r=0$ to zero order. They are all decreasing and positive for $r>0$.

\paragraph{Explicit expression for $F$.} An explicit computation is possible as follows. Since we have by \eqref{eq.sss1} that
$$
r^{1-d}(r^{d}w)'= \alpha d w^p .
$$
If we  define $z=r^{d}w$, then we  get the ODE $z'= A r^{-a-1}z^p$ with
$$
A=\alpha d, \qquad a=dp-(d-1)-1=d(p-1)=d\alpha /(1-\alpha ).
$$
 Integration of $z^{-p}z'= Ar^{-a-1}$ gives
$$
-\frac1{p-1}(z^{-(p-1)}-C_1)=-\frac{A}{a}r^{-a},
$$
$$
z^{-(p-1)}=C_1+\frac{A(p-1)}{a}r^{-a}.
$$
We have $A(p-1)/a=\alpha $ so that
$$
w(r)=(C+ \alpha  r^{-d(p-1)})^{-1/(p-1)}r^{-d} =
\frac1{\left(\alpha + Cr^{d(p-1)}\right)^{1/(p-1)}},
$$
where $ 1/(p-1)=(1-\alpha )/\alpha  $ ranges in $(0,\infty)$. Finally, the profile $F=w^p$ is given by
\begin{equation}\label{eq:sss profile}
F(r)= F(0^+)\left(1 + Cr^{d\alpha /(1-\alpha )}\right)^{-1/\alpha }, \qquad F(0^+)=\alpha^{-1/\alpha},
\end{equation}
where the exponent $1/\alpha =p/(p-1)$ ranges in $(1,\infty)$ and $C$ is left to be determined.

\medskip

We will later show, by a different method, that, under the additional condition
\begin{equation*}
	\int_{\mathbb R^d} U(t,x) \diff x = \int_{\mathbb R^d} F(|y|) \diff y  = 1,
\end{equation*}
we deduce that the self-similar profile is
\begin{equation*}
	F(|y|) =  \left(  \left( \frac{ \omega_d |y|^d}{ \alpha  }   \right)^{\frac {\alpha} { 1- \alpha}} + \alpha  \right)^{-\frac 1 \alpha}.
\end{equation*}
Hence, the self-similar solution \eqref{eq:solution is self-similar form} of mass $1$ is given by
\begin{equation}
	\label{eq:sss}
	U(t,x) = t^{-\frac 1 \alpha} \left( \alpha +
	 \left( \frac{ \omega_d |x|^dt^{-\frac 1 {\alpha}} } { \alpha} \right)^{\frac {\alpha} {1-\alpha }}
	\right)^{-1/\alpha }.
\end{equation}

\begin{remark}
	\begin{enumerate}
	 \item Self-similar solutions of mass $M$ can be obtained by the rescaling
	\begin{equation*}
		U_M(t,x) = M U(M^\alpha t, x).
	\end{equation*}
	Going back to \eqref{eq:solution is self-similar form}, the profile of the solution of mass $M$ is given by
	\begin{equation*}
		F_M( |y| ) = F\left( \frac {|y|}{M^{\frac 1 d}}  \right)
	\end{equation*}
	which yields solutions of the form
	\eqref{eq:self-sim solution of mass M}
The initial datum of such solution is a Dirac delta. The whole class reminds us of the Barenblatt solutions of fast diffusion equations, cf. \cite{VazBk2006}.
Notice that for large $y$ we have
\begin{equation*}
	F_M (|y|) \sim  \left( \frac{\alpha M }{\omega_d |y|^d} \right) ^{\frac{1}{1-\alpha}} \qquad \text{for } |y| \gg 1
\end{equation*}
so the tail depends on the total mass, unlike in the Fast Diffusion Equation, where the constant for the tail is uniform (see \cite{Vazquez2007}). On the other hand,
$F_M (0) = F(0)$ for all $M$, i.e., near $y = 0$, the self-similar solution does not detect the mass. Notice that, as
\begin{equation*}
	F_M (y) \to F(0), \qquad \text{as } M \to +\infty.
\end{equation*}
In particular the constant value $F(0)$ is the self-similar profile of the global supersolution ${\widetilde u} (t) = (\alpha t)^{-1/\alpha}$, which has infinite mass.

\item  The formula for $\alpha =1/2$ is
$$
F(|y|)= \frac{1}{4\left(1 + C|y|^{d}\right)^{2}}
$$
and the self-similar solution in original variables is
$$
U(t,x)= \frac{t^{-2}}{4\left(1 + C|x|^{d} t^{-2}\right)^{2}}=
\frac{t^{2}}{4\left(t^{2} + C|x|^{d} \right)^{2}},
$$
which is the Cauchy distribution in $d=1$ and the stereographic projection to some sphere in dimension  $d=2$.

\item The selfsimilar solution is a $C^\infty$ solution in space and time. This regularity will not be achieved by the general class of solutions we will describe below, where Lipschitz continuity will be the rule.

\item In the limit $\alpha \to 1$ we obtain the expanding vortex solution described in \cite{Serfaty+Vazquez2014}, given by
$$
U(t,x)=\frac1t \chi_{B(0,R_1 t)} ,\quad \mbox{ with }  \omega_d R_1^{d}= M .
$$
This limit is illustrated in \Cref{fig:sss}.
\end{enumerate}
\end{remark}

\begin{figure}[ht]
		\centering
		\includegraphics[scale=.5]{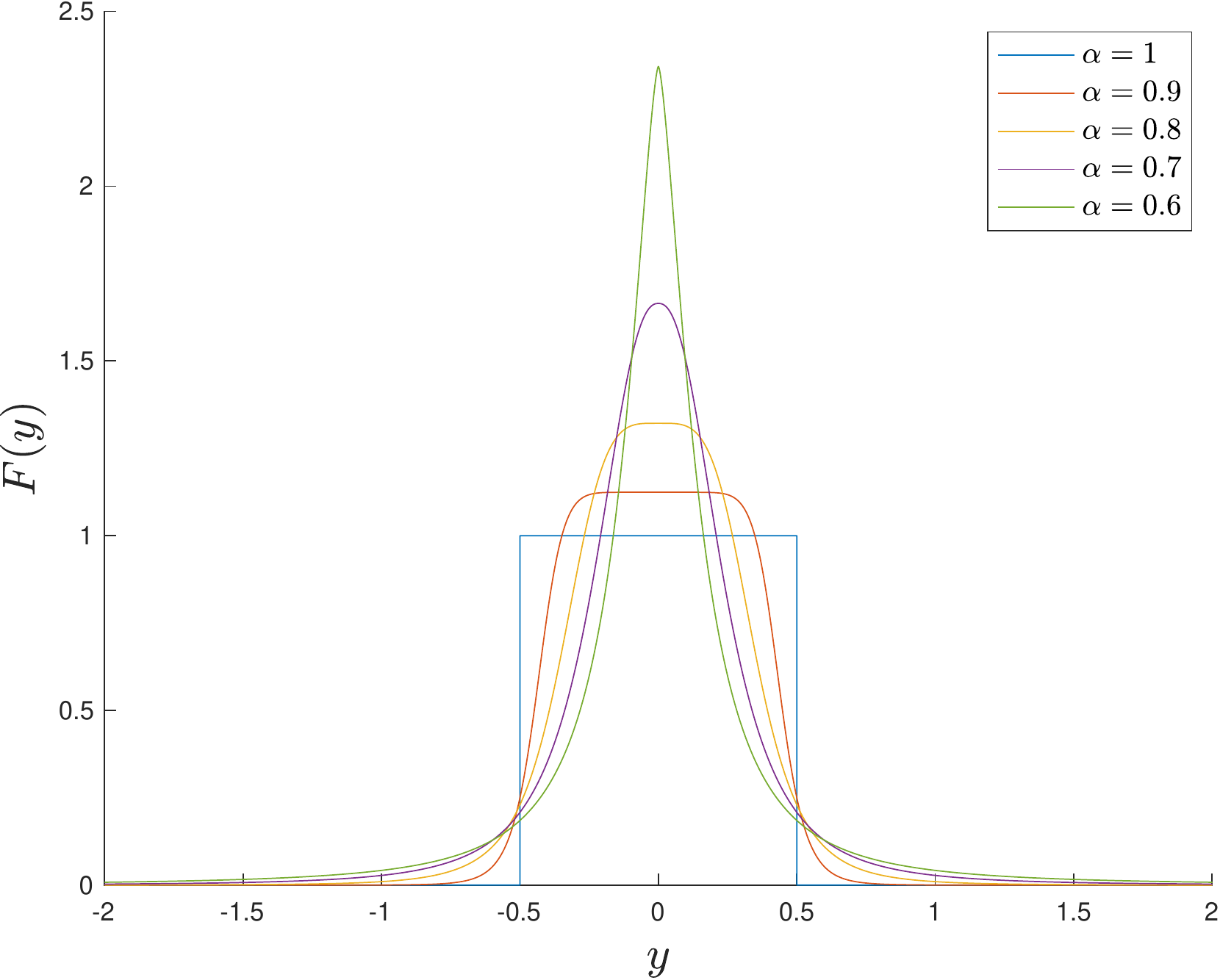}
		\caption{Selfsimilar profiles for  $d=1$ and different values of $\alpha$}
		\label{fig:sss}
	\end{figure}

\section{Mass function of radial solutions}
\label{sec.mass}

In order to proceed with our mathematical analysis, we restrict consideration to radially symmetric solutions and introduce an important tool, the mass function.

\subsection{A PDE for the mass}

\label{sec:PDE mass}

\subsubsection{Radial coordinates}
Let us consider $u = u(t,|x|)$ a radial function and let us define its mass in radial coordinates as
\begin{equation*}
	m (t,r) =d \omega_d \overline m (t, r), \qquad \text{with }\overline m(t,r) = \int_0^r  u(t,\tau) \tau^{d-1} d\tau.
\end{equation*}
We have that
$
	\overline m_r (t,r) = r^{d-1} u(t,r)
$.
Taking derivative in t
\begin{align*}
	\overline m_t &= \int_0^r \tau^{d-1} u_t \diff \tau = \int_0^r \tau^{d-1} \frac{1}{\tau^{d-1}}\frac{\partial }{\partial  r} \left( \tau^{d-1} u^\alpha \frac{\partial v}{\partial r} \right) \diff \tau\\
	&= r^{d-1} (\overline m_r r^{-(d-1)})^\alpha \frac{\partial v}{\partial  r}
	= r^{-\alpha(d-1)} \overline m_r^\alpha r^{d-1} \frac{\diff v}{\diff r}.
\end{align*}
Since $u$ is radial, then $v$ is also radial and its equation can be written
\begin{equation*}
	-\frac{1}{r^{d-1}}\frac{\partial }{\partial r}\left(  r^{d-1} \frac{\partial v}{\partial r}   \right) = u.
\end{equation*}
Hence
\begin{equation*}
	- r^{d-1} \frac{\partial v}{\partial r} = \int_0^r \tau^{d-1} u = \overline m.
\end{equation*}
Therefore, we can write a first order equation for $m$ of the form:
\begin{equation}
	\label{eq:mass Burgers r}
	\overline m_t + r^{-\alpha(d-1)} \overline m \, \overline m_r^\alpha = 0,
\end{equation}
which looks like a difficult variation of the classical Burger's equation.

\subsubsection{Volume coordinates}
Equation \eqref{eq:mass Burgers rho} above includes an unwelcome $r^{-\alpha(d-1)}$. However, by choosing the volume-scaling coordinates
\begin{equation}
	\rho = \omega_d r^{d}.		
\end{equation}
We can write $\overline m_r = d \omega_d r^{d-1} \overline m_\rho $ and hence
\begin{equation*}
	\overline m_t + r^{-\alpha(d-1)}\overline m \, ( d \omega_d r^{d-1} \overline m_\rho )^\alpha = 0.
\end{equation*}
In particular, multiplying by $d \omega_d$ we have
 \begin{equation*}
 		(d \omega_d \overline m)_t +  (d \omega_d \overline m )\, ( d \omega_d \overline m_\rho )^\alpha = 0.
 \end{equation*}
 Changing back to the $m$ variable
\begin{equation}
	\label{eq:mass Burgers rho}
	m_t + m m_\rho^\alpha = 0.
\end{equation}
In this variable, the equation for $m$ does not depend on $d$ anymore. This is a surprising new version of Burgers' equation, which is not in divergence form. For $\alpha\ne 1$, to our knowledge there is no reference in the mathematical literature to this equation.

\subsection{Method of generalised characteristics}
\label{ssect.gch}

The method of generalised characteristics (see \cite[Section 3.2]{Evans1998}) for a generic first order equation
\begin{equation*}
	G(Dw, w, \vv y) = 0,
\end{equation*}
where $\vv y = (t,x)$ consists on constructing parametric characteristic $\vv y(s)$ that can be solved independently. Applying this theory to \eqref{eq:mass Burgers rho}, with the notation $\vv y = (t,r)$, $w = m$, $p_1=m_t$, $p_2=m_\rho$, so that our equation becomes
$G=0$ with
\begin{equation}
	G(p,z,\vv y) = p_1 + z p_2^\alpha,
\end{equation}
we deduce
\begin{theorem}
	\label{thm:characteristics}
	Let $m$ be a classical solution of \eqref{eq:mass Burgers rho} with initial data $m_0$, and let the derivative be called $u_0 = (m_0)_\rho \ge 0$. As long as the characteristics
	\begin{align}
		\label{eq:characteristic}
		\rho(t) &= \rho_0 + \alpha m_0(\rho_0) u_0(\rho_0)^{\alpha - 1} t
	\end{align}
	do not cross,
	the solution is given by
	\begin{equation}
		\label{eq:mass characteristics}
		m(t, \rho(t)) = m_0(\rho_0) (1 + \alpha u_0( \rho_0 )^\alpha t)^{1- \frac 1 \alpha }
	\end{equation}
	and its derivative $u = m_\rho$ by
	\begin{equation}
		u(t,\rho(t)) = (u_0(\rho_0)^{-\alpha} + \alpha t)^{-\frac 1 \alpha}.
	\end{equation}
\end{theorem}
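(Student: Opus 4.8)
The plan is to apply the method of generalised characteristics for the fully nonlinear first-order equation $G(p,z,\vv y)=0$ exactly as in \cite[Section 3.2]{Evans1998}, and to solve the resulting characteristic system explicitly. Writing $\vv y=(t,\rho)$, $z=m$, $p=(p_1,p_2)=(m_t,m_\rho)$ and $G(p,z,\vv y)=p_1+z p_2^\alpha$, the characteristic ODEs read
\begin{equation*}
	\dot{\vv y}=D_pG,\qquad \dot z = D_pG\cdot p,\qquad \dot p = -D_{\vv y}G - (D_zG)\,p.
\end{equation*}
The crucial structural observation is that $G$ has no explicit dependence on $\vv y=(t,\rho)$, so $D_{\vv y}G=0$; this is what will ultimately force the characteristics to be straight lines. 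Computing $D_pG=(1,\alpha z p_2^{\alpha-1})$ and $D_zG=p_2^\alpha$, and using the relation $p_1=-zp_2^\alpha$ coming from $G=0$, the system becomes
\begin{equation*}
	\dot t = 1,\qquad \dot\rho = \alpha z p_2^{\alpha-1},\qquad \dot z = (\alpha-1)zp_2^\alpha,\qquad \dot p_2 = -p_2^{\alpha+1},
\end{equation*}
together with $\dot p_1=-p_2^\alpha p_1$. The equation $\dot t=1$ lets me identify the parameter $s$ with $t$, and I impose the initial data $\rho(0)=\rho_0$, $z(0)=m_0(\rho_0)$, $p_2(0)=u_0(\rho_0)$.

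Next I would solve the decoupled equations in the right order. The equation for $p_2$ is autonomous and separable: integrating $\frac{\diff}{\diff t}(p_2^{-\alpha})=\alpha$ gives $p_2^{-\alpha}=u_0(\rho_0)^{-\alpha}+\alpha t$, which is exactly the claimed formula $u(t,\rho(t))=(u_0(\rho_0)^{-\alpha}+\alpha t)^{-1/\alpha}$. Substituting $p_2^\alpha=(u_0(\rho_0)^{-\alpha}+\alpha t)^{-1}$ into the linear equation $\dot z/z=(\alpha-1)p_2^\alpha$ and integrating yields $z=m_0(\rho_0)\,(1+\alpha u_0(\rho_0)^\alpha t)^{1-1/\alpha}$, the second displayed identity.

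Finally, for the characteristic itself I would substitute the expressions for $z$ and $p_2$ into $\dot\rho=\alpha z p_2^{\alpha-1}$. The two powers of $(1+\alpha u_0(\rho_0)^\alpha t)$ carry exponents $1-\tfrac1\alpha$ and $\tfrac{1-\alpha}{\alpha}$, which sum to zero; hence $\dot\rho=\alpha m_0(\rho_0)u_0(\rho_0)^{\alpha-1}$ is constant in $t$, and integration gives the straight line \eqref{eq:characteristic}. To close the argument I would invoke the general theory: as long as the characteristics do not cross, the map $\rho_0\mapsto\rho(t)$ is a diffeomorphism, so it can be inverted to express $\rho_0$, and hence $m$ and $u$, as genuine functions of $(t,\rho)$; the standard verification that $p=Dm$ holds along characteristics (which I can check directly, since the relation $p_1=-zp_2^\alpha$ is preserved) then shows this reconstructs the classical solution.

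The main obstacle is not the computation but the regularity of $G$: since $0<\alpha<1$, the term $p_2^\alpha$ is not smooth at $p_2=0$, i.e.\ where $u=0$, so $D_pG$ and the characteristic field degenerate (indeed $u_0^{\alpha-1}\to\infty$ as $u_0\to0^+$). The clean solution of the ODE system therefore requires $u_0(\rho_0)>0$ along the characteristic, and care is needed near the free boundary where $u_0$ vanishes; this degeneracy is precisely what will force the later analysis via rarefaction fans and the viscosity-solution framework rather than classical characteristics alone.
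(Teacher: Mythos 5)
Your proposal is correct and takes essentially the same approach as the paper: Evans' generalised characteristics applied to $G(p,z,\vv y)=p_1+z p_2^\alpha$, explicit integration of the ODE system with the same initial data, and the cancellation of exponents of $(1+\alpha u_0(\rho_0)^\alpha t)$ that makes $\dot\rho$ constant and the characteristics straight lines. The only difference is cosmetic: you use $G=0$ to reduce $\dot z$ to the separable equation $\dot z=(\alpha-1)z p_2^\alpha$, whereas the paper first determines $p_1$ from the constancy of $p_1/p_2$ and then solves a linear inhomogeneous equation for $z$ with an integrating factor — both yield the same formulas.
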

\begin{remark}
	\label{rem:comments on characteristics}
\begin{enumerate}
	\item Notice that characteristics are always straight lines. Recall that $\rho$ is a volume variable.
	
	\item Due to our choice of coordinates $ \omega_d r^{d-1} u = m_r = d \omega_d r^{d-1} m_\rho$ so $m(\rho) = \int_0^\rho u(s) \diff s$.
	
	\item The equation for the mass \eqref{eq:mass Burgers rho} has infinite speed of propagation for the derivative $u$.
	
	\noindent \label{it:u0 triangle} When $u_0$ is the triangle
	$
		u_0(\rho) = (1-\rho)_+
	$
	then the mass is given by
	\begin{equation*}
		m_0(\rho) = \frac 1 2 (1 - (1 - \rho_0)^2 ) \qquad \text{for } 0 \le  \rho_0 \le  1.
	\end{equation*}
	Hence the characteristics from $\rho_0 \in [0,1]$ are written
	\begin{equation*}
		\rho = \rho_0 + \frac \alpha 2  \left( 1 - (1 - \rho_0)^2 \right) (1-\rho_0)^{\alpha-1} t \qquad \text{for } 0 \le  \rho_0 \le  1.
	\end{equation*}
	For any $ t > 0$, these characteristics cover all $\rho \ge 0$, as shown in \Cref{fig:characteristics triangle}.
	\begin{figure}[H]
		\centering
		\includegraphics[]{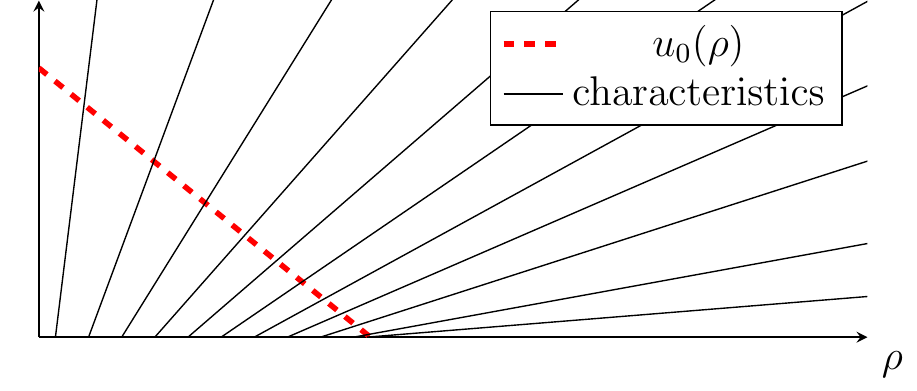}
		\caption{Characteristics corresponding to $u_0(\rho) = (1-\rho)_+$ for $\rho_0 \in [0,1]$}
		\label{fig:characteristics triangle}
	\end{figure}
	Notice that characteristics from $ \rho_0 \in (1,+\infty)$ are constant $\rho = \rho_0$, since $u_0(\rho_0)$ and $m(\rho_0) = 1/2$.

	\item \label{it:u0 two triangle} A remarkable difference of \eqref{eq:mass Burgers rho} with respect to Burger's equation is the fact that, even for Lipschitz initial data $m_0$, characteristics may cross for all $t > 0$ (see \Cref{fig:characteristics double triangle}).
	These intersections will lead to a shock, governed by a variant of the classical Rankine-Hugoniot conditions \cite{GR,Sm}, as we will see below.
	\begin{figure}[H]
		\centering
		\includegraphics[]{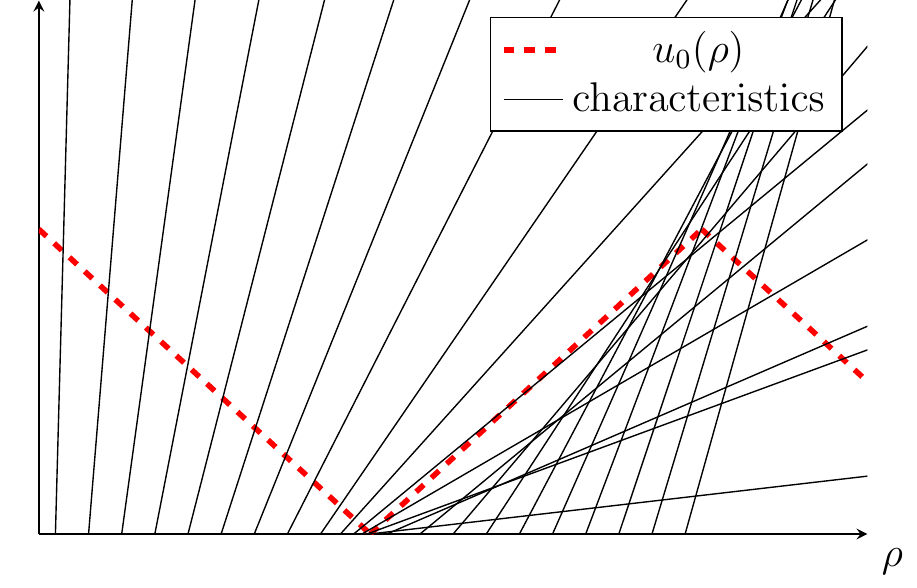}
		\caption{Characteristics corresponding to $u_0(\rho) = (1-\rho)_+ + (  1 - |\rho-2|  )_+$ for $\rho_0 \in [0,2]$.}
		\label{fig:characteristics double triangle}
	\end{figure}

	\item Notice that by a happy coincidence, $u = m_\rho = p_2$. Hence, by solving the system of ODEs, we already obtain the value of the original function $u$ along the characteristic.
	
	\end{enumerate}
\end{remark}

\begin{proof}
As usual, we form a two-parametric family $\vv y(s,\rho)$ of characteristics and then identifying the surface they build as the solution.
Following \cite[Section 3.2]{Evans1998} we next construct the characteristics. Using the notations
\begin{equation*}
	z(s) = w(\vv y(s)), \qquad \vv p(s) = Dw (\vv y(s)).
\end{equation*}
The equations for the characteristics are
\begin{subequations}
	\label{eq:characteristics Evans}
	\begin{align}
		\dot {\vv p}(s) &= - D_y G (\vv p(s), z(s), \vv y(s)) - D_z G (\vv p(s), z(s) , \vv y(s)) \vv p (s) \\
		\dot z(s) &= D_p G (\vv p(s), z(s) , \vv y(s))  \cdot \vv p (s) \\
		\dot{ \vv y }(s)  &= D_p G (\vv p(s), z(s) , \vv y(s))
	\end{align}
\end{subequations}
We write system \eqref{eq:characteristics Evans}
as
\begin{subequations}
	\label{eq:characteristics Evans2}
	\begin{align}
		\dot {p_1}(s) &= - p_2^{\alpha} p_1\\
		\dot p_2 (s) &=  - p_2^{\alpha + 1} \\
		\dot z(s) &= (1, \alpha z p_2^{\alpha - 1})\cdot (p_1, p_2)  = p_1 + \alpha z p_2^\alpha \\
		\dot t (s) &= 1\\
		\dot \rho (s) &= \alpha z p_2^{\alpha - 1}
	\end{align}
\end{subequations}
For the initial data we take $t(0) = 0 $ and
$\rho(0)=\rho_0>0$. We have the following initial conditions:
\begin{subequations}
	\label{eq:initial conditions z and p2}
	\begin{align}
	z(0) &= z_0 = m (0,\rho_0)  \int_0^{\rho_0} u_0 (s) ds. \\
	p_2 (0) &= p_{2,0} = m_r (0,\rho_0) = u (0,\rho_0) = u_0 (\rho_0).
\end{align}
\end{subequations}
The equation relates the values of  $p_1(0), p_{2,0}$ and $z_0$:
$
	p_1(0) = - p_{2,0}^\alpha z_0.
$

We first notice that $t(s) = s$. If $u_0 (\rho_0) = 0$ then $p_1 = p_2 = 0$, and hence $z \equiv z_0$ and $\rho(s) \equiv \rho_0$. In other words, points outside the support of $u_0$ do not propagate in any direction. Furthermore, if $u_0(r) = 0$ then $m(r,t) = m(r,0)$. However, if $u_0 (\rho_0) = p_{2,0} > 0$ then $p_2(t) > 0$ then $\dot \rho > 0$ and hence it is increasing.

Observe that the equation for $p_2$ is autonomous, then it can be solved explicitly to get
\begin{equation}
	p_2(t) =  \left(p_{2,0}^{-\alpha }+ \alpha t\right)^{-1/\alpha },
\end{equation}
if $p_{2,0} > 0$.
Notice that $p_1 \dot p_2 - \dot p_1 p_2 = 0$,
therefore $p_1/ p_2$ is constant and
\begin{equation*}
	p_1 (t) = \frac{p_1(0)}{p_{2,0}} \left(p_{2,0}^{-\alpha }+ \alpha t\right)^{-1/\alpha }.
\end{equation*}
Using the condition on the initial data we finally obtain
\begin{equation}
	p_1 (t) = -\frac{z_0}{p_{2,0}} \left(1+\alpha p_{2,0}  t\right)^{-1/\alpha
   }.
 \end{equation}
Once $p_1$ and $p_2$ are known, we can solve for $z$ as a linear equation with variable coefficients. We have
$$
\dot z(t) -\frac{\alpha}{p_{2,0}^{-\alpha }+ \alpha t}z(t)=p_1(t).
$$
Since, for any two functions $f' g - f g' = (f/g)' g^2$, we deduce that
$$
\frac{d}{dt} \left( \frac{z(t)}{(p_{2,0}^{-\alpha }+ \alpha t)^{-1}}\right)=-\frac{z_0}{p_{2,0}} \left(1+\alpha p_{2,0}^\alpha  t\right)^{-1-1/\alpha}.
$$
Integrating on $[0,t]$ and solving for $z$ we deduce that
\begin{align}
		z(t) &= {z_0 }\left(1+ \alpha p_{2,0}^\alpha  t\right)^{1-1/\alpha }.
	\end{align}
Thus we deduce
\begin{equation*}
	\dot \rho(t) = z(t) p_2 (t)^{\alpha-1} =  z_0 p_{2,0}^{\alpha - 1}.
\end{equation*}
Hence
\begin{align}
	\label{eq:characteristic r explicit}
	\rho(t)
	&=\rho_0 + \alpha z_0 p_{2,0}^{\alpha - 1} t.
\end{align}
To deduce \eqref{eq:characteristic} we substitute the values from the initial data in \eqref{eq:initial conditions z and p2}.
\end{proof}

\begin{remark}
Notice that the argument works for any $\alpha > 0$.
The characteristics' formula \eqref{eq:characteristic} shows us that the cases $\alpha \in (0,1), \{1\}, (1,+\infty)$ behave quite differently. The case $\alpha = 1$ is the Burgers equation.
In the case $0 < \alpha < 1$ solutions with small positive initial value will disperse almost instantaneously (as in the Fast Diffusion Equation, see \cite{Vazquez2007}). Oppositely, for $\alpha > 1$ the larger the initial data the slower it will diffuse (as in the Porous Medium Equation, see \cite{Vazquez2007}).
\end{remark}

\begin{remark}
	Notice that for points in the support of $u_0$, characteristics are increasing straight lines. If the support of $u_0$ is bounded, characteristics coming from the support of $u_0$ (with positive values of $u$), will intersect characteristics from outside the support. We will see later how solutions overcome this difficulty.
\end{remark}

\section{Radial non-increasing data \texorpdfstring{$u_0$}{u0}}
\label{sec.radial}

In this section we will consider with non-increasing radial data for \eqref{eq:main equation}, by the method of characteristics.

\subsection{Continuous \texorpdfstring{$u_0$}{u0}}
\label{ssec:characteristics decreasing continuous}

In this case we will show that the characteristics do not cross, and hence we can construct classical solutions of \eqref{eq:mass Burgers rho} using Theorem \ref{thm:characteristics}. Then $u$ is determined in an implicit way by
\begin{align}
	\label{eq:solution radial nonincreasing continuous data}
	u\left(t, x \right) &= \left(u_0 \left(\rho_0\right)^{-\alpha} + \alpha t \right)^{-\frac {1} \alpha}  \quad \textrm{ where } \omega_d {|x|^d} = \rho_0 + \alpha m_0\left( \rho_0 \right) u_0\left( \rho_0 \right)^{\alpha - 1} t .
\end{align}
We introduce the following function
\begin{equation*}
	P_t(\rho_0) = \rho_0 + \alpha m_0\left( \rho_0 \right) u_0 \left( \rho_0 \right)^{\alpha - 1} t.
\end{equation*}
Let us distinguish two cases.

\paragraph{Positive $u_0$.}
Since $u$ is positive, $m$ is strictly increasing and, since $u_0$ is strictly decreasing, then $u_0 (\rho_0)^{\alpha-1}$ is non-decreasing. Hence, for every $t > 0$, $P_t$ is a strictly increasing function of $\rho_0$, and therefore invertible. It is null at zero and unbounded at infinity. Hence, $P_t : [0,+\infty) \to [0,+\infty)$ is invertible. Therefore, for every $t > 0$ and $x \in \mathbb R^d$ there exists a unique $\rho_0$ such $\omega_d |x|^d = P_t (\rho_0)$.

\paragraph{Compactly supported $u_0$.}

If the initial datum reaches zero, then $\supp u_0 = \overline{B_R}$ for some $R > 0$. Then $P_t$ is still a strictly increasing function for $\rho < R$. Clearly $P_t(R^-) = +\infty$. Hence, for every $t > 0$ we have $P_t : [0,R) \to [0,+\infty)$ is invertible.

\subsection{Discontinuous data: rarefaction fan solutions}
\label{ssec:characteristics decreasing discontinuous}

\subsubsection{Rarefaction fan solution for $u_0 (\rho) = \chi_{[0,L]} (\rho)$}
	\label{sec:rarefaction fan u_0 square}
	If one considers a regularised version of the square functions
	\begin{equation}
		u_0^{(\ee)} (\rho)= \begin{dcases}
			c_0 & \rho \le L  , \\
			\frac{c_0}{\ee} (L + \ee  - \rho) & L   \le \rho < L + \ee \\
			0 & \rho \generic L + \ee.
		\end{dcases}
	\end{equation}
	The initial mass becomes
	\begin{equation*}
		m_0^{(\ee)} (\rho) = \begin{dcases}
			c_0 \rho & \rho \le L, \\
			c_0 L + \frac{c_0}{\ee} \frac{(L + \ee - \rho)^2} 2& L < \rho \le L + \ee \\
			c_0 L + c_0 \ee & \rho \ge L + \ee.
		\end{dcases}
	\end{equation*}
	We write the characteristics
	\begin{equation*}
		\rho = \rho_0 + \alpha m_0^{(\ee)} (\rho_0) u_0^{(\ee)}(\rho_0)^{\alpha - 1}  t , \qquad \text{for } 0 < \rho < L+\ee.
	\end{equation*}
	Since $0<\alpha<1$, these characteristics cover the whole space $(t,\rho) \in (0,+\infty)^2$. There is a function $\rho_0^{(\ee)}(t,\rho)$, with no simple explicit formula.	
	Then
	\begin{equation}
		u^{(\ee)}(t,\rho) = ( u_0^{(\ee)}(\rho_0^{(\ee)}(t,r))^{-\alpha} + \alpha t)^{-\frac 1 \alpha}.
	\end{equation}
	The characteristic emanating from the end of the flat part is still
	\begin{equation*}
		\rho = L + \alpha c_0L (c_0)^{\alpha - 1} t = L ( 1 + \alpha c_0^\alpha t).
	\end{equation*}
	For $t>0$ and $\rho > L ( 1 + \alpha c_0^\alpha t)$, as $\ee \to 0$ then
	$
		\rho_0^{(\ee)}(\rho) \to L,
	$
	whereas $u_0^{(\ee)}(\rho_0^{(\ee)}(t,\rho))$ is a bounded sequence, so let $p_{2,0}(t,\rho) \in [0, c_0]$ be a pointwise limit. Then $p_{2,0}(t,\rho)$ is a solution of
	\begin{equation*}
		 \rho = L + \alpha m_0 (L) p_{2,0}(t,\rho)^{\alpha - 1} t.
	\end{equation*}	
	Since $m_0 (L) = c_0 L$ we have that
	\begin{equation*}
		\rho = L + \alpha c_0 L p_{2,0}(t,\rho)^{\alpha - 1} t.
	\end{equation*}
	Therefore the limit is unique and
	\begin{equation*}
		u_0^{(\ee)}(\rho_0^{(\ee)}(t,\rho)) \longrightarrow p_{2,0}(t,\rho) = \left( \frac{\rho-L}{\alpha c_0 L t }  \right)^{\frac 1 {\alpha - 1}}.
	\end{equation*}
	In particular, the pointwise limit is unique. Hence, the whole sequence converges pointwisely to this limit
	\begin{equation*}
		u^{(\ee)}(t,\rho)  \longrightarrow \left(  \left( \frac{\rho-L}{\alpha c_0 L t}   \right)^{\frac \alpha { 1- \alpha}} + \alpha t \right)^{-\frac 1 \alpha}, \qquad \text{for } t>0 \textrm{ and }\rho > L ( 1 + \alpha c_0^\alpha t).
	\end{equation*}
	Since the pointwise limit elsewhere is given by $u^{(\ee)}(t,\rho) \longrightarrow u(t,\rho)$, where
	\begin{equation}
		\label{eq:u square data rarefaction fan}
		 u(t,\rho) = \begin{dcases}
		 \left(   c_0 ^{- \alpha } + \alpha t \right)^{-\frac 1 \alpha} & \text{if } \rho \le L ( 1 + \alpha c_0^\alpha t) \\
			\left(  \left( \frac{ \rho -L}{\alpha c_0 L t}   \right)^{\frac \alpha { 1- \alpha}} + \alpha t \right)^{-\frac 1 \alpha} &  \text{if } \rho > L ( 1 + \alpha c_0^\alpha t).
	\end{dcases}
	\end{equation}
	We have only proved pointwise convergence. Since the sequence $u^{(\ee)} \in L^1 \cap L^\infty$ is bounded, the limit is $L^p$ for $1 < p < +\infty$ and weak-$\star$ in $L^\infty$.

	\begin{remark}
	This last function is continuous, and therefore its mass is a classical of \eqref{eq:mass Burgers rho} .
	\end{remark}
	\begin{figure}[H]
	\centering
		\includegraphics[width=.49\textwidth]{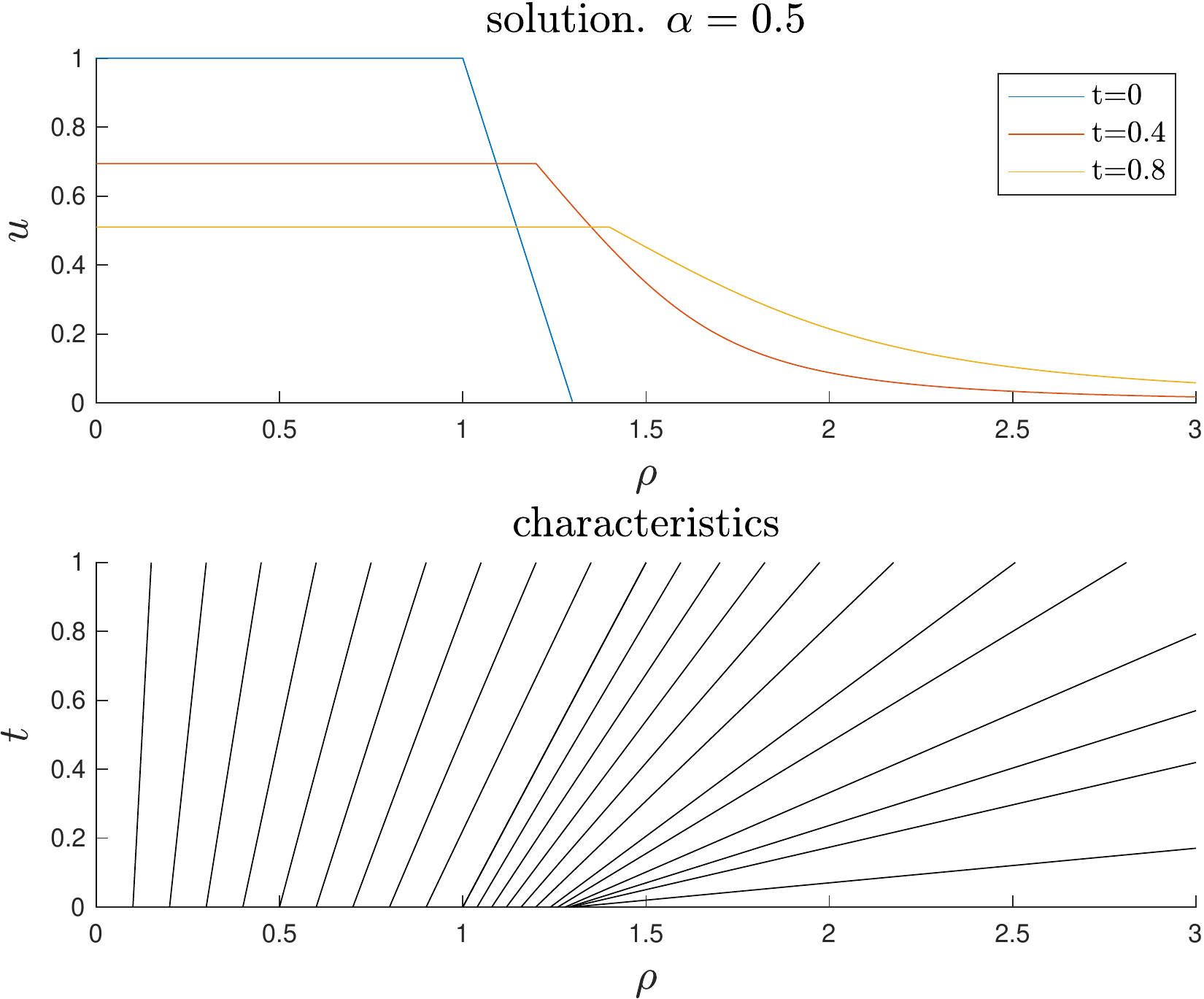}
		\includegraphics[width=.49\textwidth]{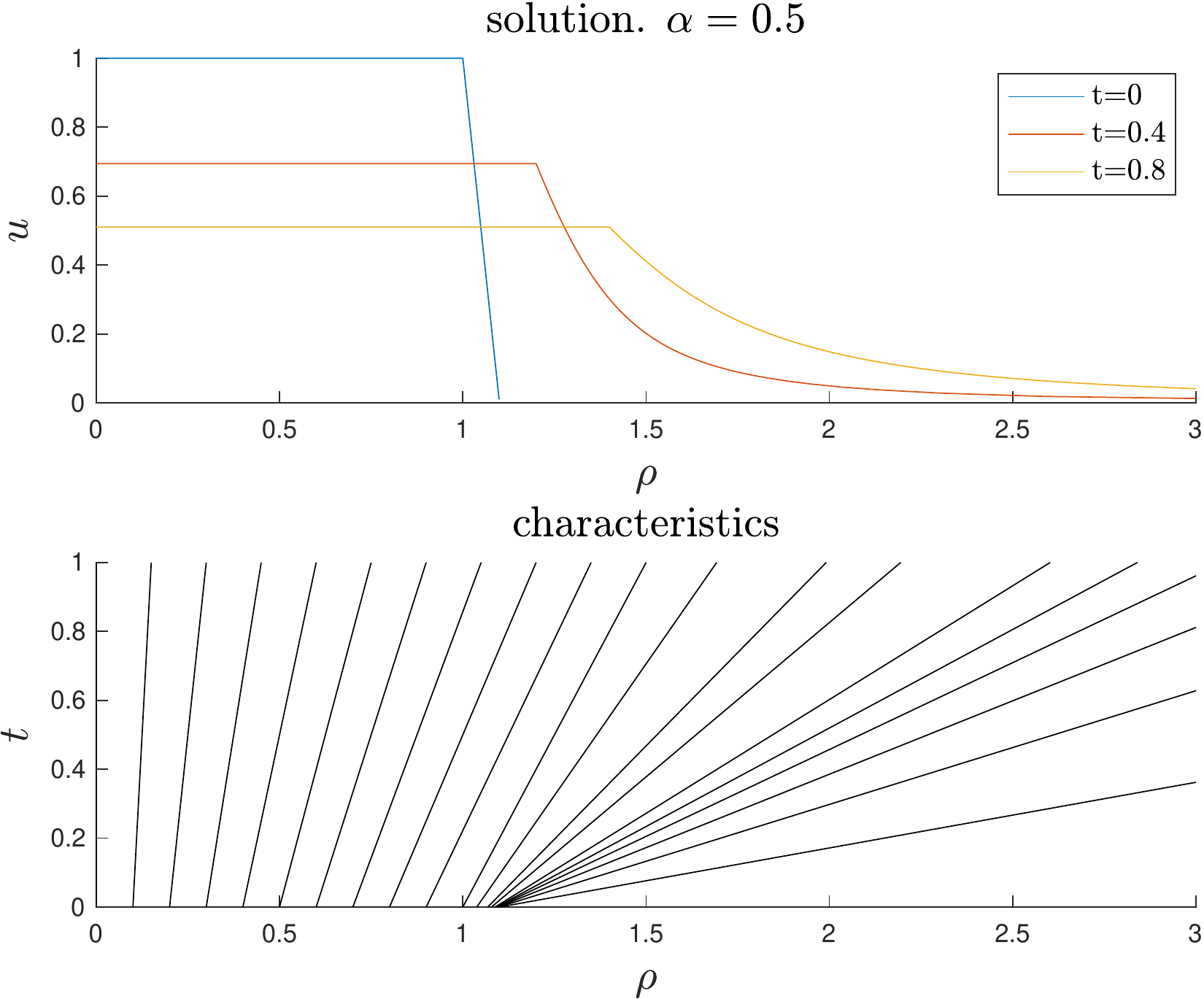}
		\caption{Solution by characteristics for the case $\alpha = .5$ and initial data  $u_0^{(\ee)}$ ($c_0 = 1$, $L =1$ and $\ee=0.3$ left and $\ee = 0.1$ right) a continuous version of the characteristic function, still with compact support. The characteristics guarantee that flat zones are preserved. For $t>0$ there is no longer compact support.}
		\label{fig:alpha-0.5-quasisquare}
	\end{figure}
	
	We point out that an analogy to fast diffusion equation is limited since Figure \ref{fig:alpha-0.5-quasisquare} shows that solutions are Lipschitz continuous and no more even if fat tails are produced.

\subsubsection{Recovering the self-similar solution}
	\label{sec:from square to self-similar}
	
	Let us consider \eqref{eq:u square data rarefaction fan} and fix the total mass $M = c_0 L$.  We get
	\begin{equation*}
		 u_L(t,x) = \begin{dcases}
		 \left(   M ^{- \alpha } L^\alpha + \alpha t \right)^{-\frac 1 \alpha} & \omega_d{|x|^d} \le L + \alpha M^\alpha L^{1 - \alpha} t \\
			\left(  \left( \frac{\omega_d |x|^d-L}{\alpha M t}   \right)^{\frac \alpha { 1- \alpha}} + \alpha t \right)^{-\frac 1 \alpha} & \omega_d {|x|^d} > L + \alpha M^\alpha L^{1 - \alpha} t.
	\end{dcases}
	\end{equation*}
	As $L \to 0$ we recover the self-similar solution of mass $M$
	\begin{align*}
		u_L(t,x) &\longrightarrow \left(  \left( \frac{\omega_d |x|^d}{ \alpha M t}   \right)^{\frac \alpha { 1- \alpha}} + \alpha t \right)^{-\frac 1 \alpha}
		= t^{-\frac 1 \alpha}\left(  \left( \frac{t^{-\frac 1 \alpha} \omega_d |x|^d}{\alpha M}   \right)^{\frac \alpha { 1- \alpha}} + \alpha  \right)^{-\frac 1 \alpha}\\
		&= t^{-\frac 1 \alpha} F \left(  \frac{t^{-\frac 1 {d\alpha}}|x|}{M^{\frac 1 d}}  \right) = U_M (t,x).
	\end{align*}
	
	\subsubsection{Rarefaction fan solution for discontinuous non-increasing data}
\label{ssect.rf}
	
	Combining the formula of solutions by characteristics \eqref{eq:solution radial nonincreasing continuous data} with the rarefaction fan idea we construct
	\begin{subequations}
		\label{eq:solution radial nonincreasing discontinuous data}
	\begin{equation}
	u\left(t, \rho  \right) = \left(\eta_0^{-\alpha} + \alpha t \right)^{-\frac {1} \alpha}  \quad \textrm{where } \rho = \rho_0 + \alpha m_0\left( \rho_0 \right) \eta_0^{\alpha - 1} t,
	\end{equation}
	where $\eta_0$ is some value
	\begin{equation}
		\eta_0 \in \left[{u_0} \left(\rho_0^+ \right), {u_0} \left(\rho_0^- \right) \right] .
	\end{equation}
	\end{subequations}
	Let us now show that these solutions are well-defined.
	
	\begin{proposition}
		\label{eq:rarefaction fan solution discontinuous}
		Let $u_0 \in L^\infty_+ ([0,+\infty)) \cap L^\infty_+ ([0,+\infty)) $, $u_0 \not \equiv 0$ and radially non-increasing.
		For every $t > 0$, the map
		\begin{eqnarray}
			P_t :  \bigcup_{\rho_0 : u_0 (\rho_0) > 0 } \{\rho_0\} \times \left[{u_0} \left(\rho_0^+ \right), {u_0} \left(\rho_0^- \right) \right]  & \longrightarrow & \mathbb R_+ \\
			(\rho_0, \eta_0, t) & \longmapsto & \rho_0 + \alpha m_0 (\rho_0) \eta_0^{\alpha - 1} t
		\end{eqnarray}
		is bijective. Therefore, the map \eqref{eq:solution radial nonincreasing discontinuous data} is well-defined. Furthermore, it defines a function $u \in \mathcal C([0,+\infty)^2)$ such that
		\begin{equation}
			u(t, \cdot) \to u_0 \textrm{ in } L^1 (\mathbb R^n).
		\end{equation}
		The function $m$ given by
		\begin{equation}
		\label{eq:mass rarefaction fan decreasing data}
			m(t,\rho) = m_0 (\rho_0) (1 + \alpha \eta_0^\alpha t)^{1- \frac 1 \alpha} , \qquad \text{where } (\rho_0, \eta_0) = P_t^{-1} (\rho).
		\end{equation}
		is a classical solutions of \eqref{eq:mass Burgers rho} by characteristics.
	\end{proposition}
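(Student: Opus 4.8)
The plan is to reduce the whole statement to the monotonicity of $P_t$ along the \emph{filled graph} of $u_0$. Since $u_0$ is non-increasing, its graph, with every jump completed by the vertical segment $\{\rho_0\}\times[u_0(\rho_0^+),u_0(\rho_0^-)]$, is a connected curve $\Gamma$ that I would parametrise monotonically: moving with $\rho_0$ increasing and, inside each fan (fixed $\rho_0$), with $\eta_0$ decreasing from $u_0(\rho_0^-)$ down to $u_0(\rho_0^+)$. Under this parametrisation $\rho_0$ is non-decreasing, $m_0(\rho_0)$ is non-decreasing (because $m_0'=u_0\ge0$), and $\eta_0$ is non-increasing, so $\eta_0^{\alpha-1}$ is non-decreasing as $\alpha-1<0$. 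Injectivity then follows from showing that $P_t$ is \emph{strictly} increasing along $\Gamma$ for each fixed $t>0$: away from fans $\rho_0$ increases strictly while $m_0(\rho_0)\eta_0^{\alpha-1}$ does not decrease; inside a fan $\rho_0$ and $m_0(\rho_0)$ are frozen (with $m_0(\rho_0)>0$ whenever $\rho_0>0$, since $u_0>0$ on $[0,\rho_0]$), while $\eta_0^{\alpha-1}$ increases strictly. The only degeneracy is at $\rho_0=0$, where $m_0(0)=0$ forces $P_t\equiv0$; this single endpoint maps to $\rho=0$ and causes no loss of bijectivity onto $\mathbb R_+$.

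For surjectivity I would check that $P_t$ is continuous along $\Gamma$ and sweeps all of $(0,+\infty)$. Continuity is immediate from the explicit formula together with the absolute continuity of $m_0$: at a jump $\rho_*$ the left limit $\rho_*+\alpha t\,m_0(\rho_*)u_0(\rho_*^-)^{\alpha-1}$ matches the top of the fan, and the right limit matches its bottom, so no value is skipped. At the lower end $P_t\to0$, and at the upper end $\eta_0\to0$—either because the support is unbounded with $u_0\to0$, or because $u_0$ jumps to $0$ at the edge of a bounded support—whence $\eta_0^{\alpha-1}\to+\infty$ and $P_t\to+\infty$. A continuous strictly increasing map of an interval onto $[0,+\infty)$ is a bijection, which proves the bijectivity and makes $P_t^{-1}$ well defined.

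It remains to upgrade to the regularity statements. The strict monotonicity is uniform in $t$ on compact subsets of $(0,\infty)$, so $(t,\rho)\mapsto(\rho_0,\eta_0)=P_t^{-1}(\rho)$ is jointly continuous for $t>0$, and hence so is $u(t,\rho)=(\eta_0^{-\alpha}+\alpha t)^{-1/\alpha}$; the fans are precisely what removes the discontinuities of $u_0$ once $t>0$. For the initial trace, at each continuity point $u(t,P_t(\rho_0))=(u_0(\rho_0)^{-\alpha}+\alpha t)^{-1/\alpha}\to u_0(\rho_0)$ with $P_t(\rho_0)\to\rho_0$, giving convergence a.e.; since $u(t,\cdot)\ge0$ and the total mass is conserved—from $m(t,\rho)=m_0(\rho_0)(1+\alpha\eta_0^\alpha t)^{1-1/\alpha}\to M$ as $\rho\to\infty$, because $\eta_0\to0$—Scheffé's lemma yields $u(t,\cdot)\to u_0$ in $L^1$. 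That $m$ is a classical solution follows by gluing: in the characteristic regions it solves \eqref{eq:mass Burgers rho} by \Cref{thm:characteristics}, while in each fan region one verifies directly, exactly as for the square datum of \Cref{sec:rarefaction fan u_0 square}, that the rarefaction profile solves the equation, and the matching across the fan boundaries (where the two formulas share the boundary characteristic) assembles them into a global solution.

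I expect the main obstacle to be this last step: establishing that the glued function is genuinely $C^1$ across the interfaces between characteristic and fan regions, so that the term \emph{classical solution} is justified, together with the joint continuity of $P_t^{-1}$ near the tail where $\eta_0\to0$. The monotonicity bookkeeping for bijectivity is routine once the ordering of $\Gamma$ is fixed; the delicate points are the $C^1$ regularity at the fan edges and the uniform control needed for the $L^1$ initial trace.
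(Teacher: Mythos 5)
Your core bijectivity argument is the same as the paper's: your monotone parametrisation of the filled graph (increasing $\rho_0$, decreasing $\eta_0$ within each fan) is precisely the strict total order $\prec$ that the paper defines on the domain of $P_t$, with injectivity coming from strict monotonicity of $P_t$ along that order and surjectivity from continuity plus the endpoint limits $0$ and $+\infty$. Two genuine differences are worth recording. First, your treatment of the upper endpoint is more careful than the paper's: the paper asserts that $u_0(\rho_0^+)\to 0$ as $\rho_0\nearrow R$, which is literally false when $u_0$ jumps to zero at the edge of a bounded support (e.g.\ $u_0=\chi_{[0,L]}$, where $u_0(\rho_0^+)\to 1$); surjectivity in that case really comes from letting $\eta_0\to 0$ inside the terminal fan, exactly your case distinction. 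Second, the paper's proof stops once bijectivity is established, so the continuity of $u$, the $L^1$ initial trace, and the classicality of $m$ are asserted there without proof; your Scheff\'e argument (a.e.\ convergence at continuity points of $u_0$ plus conservation of total mass) and your region-gluing argument are additions, not parallels. The $C^1$ matching that you flag as the main remaining obstacle does go through: in each characteristic or fan region $m_t=-m\,u^\alpha$ holds classically, and since $m$ and $u=m_\rho$ are continuous across the interfaces, both partial derivatives of $m$ extend continuously and the glued function is $C^1$; but you are right that this verification appears nowhere in the paper. Finally, the degeneracy you note at $\rho_0=0$ (the whole fan collapses to $\rho=0$ when $u_0(0)>u_0(0^+)$, so strict injectivity only holds over $(0,+\infty)$) is shared verbatim by the paper's order argument and is harmless.
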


	\begin{proof}
		We define an order over the domain of $P_t$
		\begin{equation*}
			(\rho_{0,1}, \eta_{0,1}) \prec (\rho_{0,2}, \eta_{0,2}) \equiv
			\begin{dcases}
				\rho_{0,1} < \rho_{0,2}  \\[2mm]
				\text{or} \\
				\rho_{0,1} = \rho_{0,2} \text{ and } \eta_{0,1} > \eta_{0,2}.
			\end{dcases}
		\end{equation*}
		This defines a strict total order in the domain of $P_t$. Furthermore, notice that
		\begin{equation*}
			(\rho_{0,1}, \eta_{0,1}) \prec (\rho_{0,2}, \eta_{0,2}) \implies P_t(\rho_{0,1}, \eta_{0,1}) < P_t(\rho_{0,2}, \eta_{0,2}).
		\end{equation*}
		Hence, $P_t$ is injective. Furthermore, it is continuous with the topology induced in the domain of $P_t$. It is immediate to check that
		\begin{equation*}
			P_t (0,u(0^+)) = 0.
		\end{equation*}
		Notice that $\{ \rho_0 : u_0(\rho_0) > 0\} = (0,R)$ where $R \le +\infty$. As $\rho_0 \nearrow R$ we have that $u_0(\rho_0^+) \to 0$ and $m_0(\rho_0) \nearrow M $, hence
		\begin{equation*}
			P_t (\rho_0,u(\rho_0^+)) \nearrow +\infty.
		\end{equation*}
		Hence $P_t$ is surjective. This completes the proof.
	\end{proof}

\subsubsection{Data with an initial gap}
	\label{sec:data with initial gap}
		Notice that if $u$ is given by \eqref{eq:solution radial nonincreasing discontinuous data}, then
		\begin{equation*}
		\widetilde u (t,\rho) =
		\begin{dcases} u(t,\rho-b) & \rho \ge b, \\
		0 & 0 \le \rho < b	
		\end{dcases}
		\end{equation*}
		is also a solution, and it corresponds to the initial datum
		\begin{equation*}
			\widetilde u_0 (\rho) =
			\begin{dcases} u_0(\rho-b) & \rho \ge b, \\
			0 & 0 \le \rho < b	
			\end{dcases}
		\end{equation*}
		Furthermore, this solution can be obtained by approximation by continuous initial data given by characteristics. Therefore $\supp \widehat u(t,\cdot) = [b,+\infty)$. The conclusion is that this kind of gap is preserved in time.
		\begin{figure}[H]
			\centering
			\includegraphics[width=.49\textwidth]{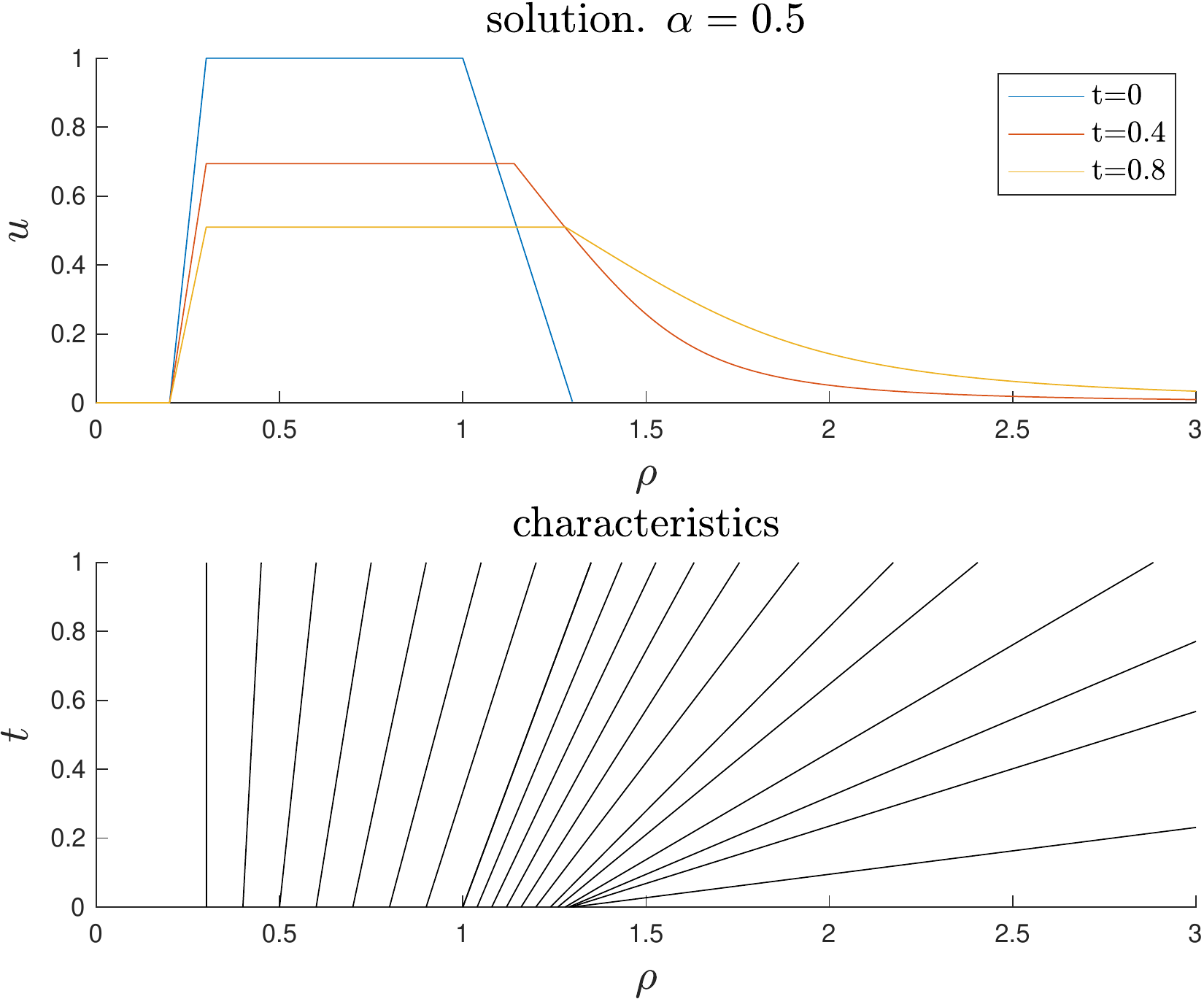}
			\includegraphics[width=.49\textwidth]{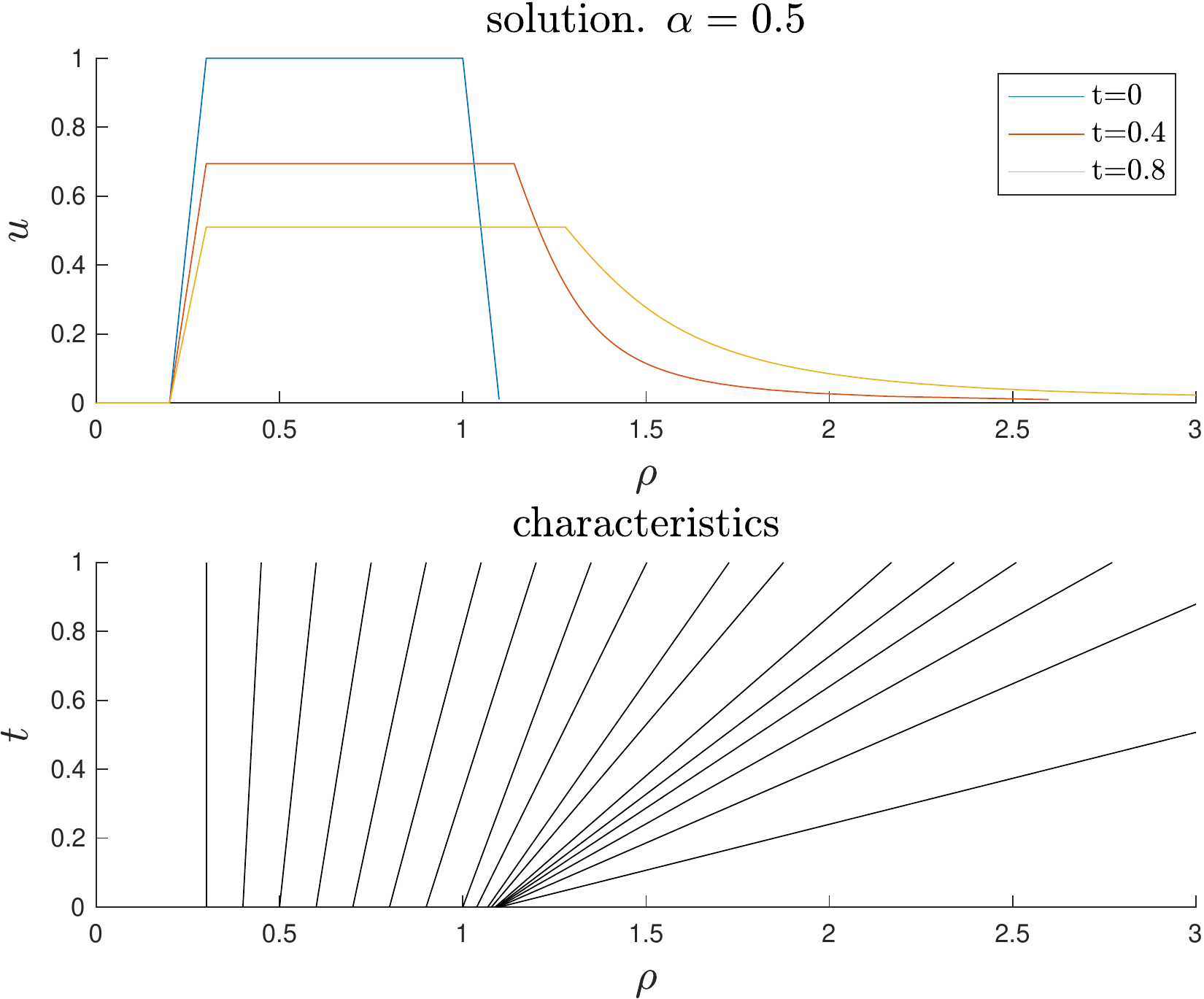}
			\caption{Solution by characteristics for the case $\alpha = .5$ where initial data has an initial gap.}
			\label{fig:alpha-0.5-quasisquare-gap}
		\end{figure}

	\begin{remark}
		\label{rem:P_t as rho to infinity}
		Notice that, if $\supp u_0 = [b,L]$, then for any $t > 0$, $\lim_{\rho \to +\infty} P_t^{-1} (\rho) = (L,0)$. If $\supp u_0 = [0,+\infty)$ then $\lim_{\rho \to +\infty} P_t^{-1} (\rho) \to +\infty$.
	\end{remark}
	
\subsection{Qualitative properties}
\label{ssec:characteristics decreasing qualitative}

\subsubsection{Mass conservation}

\begin{proposition}
	\label{prop:characteristics mass conservation}
	For the classical solution $m$ of \eqref{eq:mass Burgers rho} given by \eqref{eq:solution radial  nonincreasing discontinuous data} under the hypothesis of \Cref{eq:rarefaction fan solution discontinuous}. Then, for every $t \ge 0$, we have that
	\begin{equation}
		\lim_{\rho \to + \infty} m(t,\rho) = \lim_{\rho \to + \infty} m_0(\rho).
	\end{equation}
	\begin{proof}
		Combining \eqref{eq:mass rarefaction fan decreasing data} and \Cref{rem:P_t as rho to infinity} we conclude the result.
	\end{proof}
\end{proposition}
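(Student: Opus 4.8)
The plan is to show that the total mass is conserved along the evolution, i.e. that $\lim_{\rho\to+\infty} m(t,\rho)$ equals $\lim_{\rho\to+\infty} m_0(\rho) = M$, by tracking what happens to the characteristic parametrisation as $\rho\to+\infty$. The key structural fact, already established, is that for each $t>0$ the map $P_t$ is a bijection from its domain onto $\mathbb{R}_+$ (this is \Cref{eq:rarefaction fan solution discontinuous}), so that every large $\rho$ is reached by a unique characteristic with foot data $(\rho_0,\eta_0)=P_t^{-1}(\rho)$, and the mass is given explicitly by \eqref{eq:mass rarefaction fan decreasing data}, namely $m(t,\rho)=m_0(\rho_0)\,(1+\alpha\eta_0^\alpha t)^{1-\frac1\alpha}$.

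First I would identify the limiting foot data as $\rho\to+\infty$. By \Cref{rem:P_t as rho to infinity}, as $\rho\to+\infty$ we have $P_t^{-1}(\rho)\to$ the right endpoint of the support with $\eta_0\to 0$: concretely, if $\supp u_0=[b,L]$ then $P_t^{-1}(\rho)\to(L,0)$, while if $\supp u_0=[0,+\infty)$ then $\rho_0\to+\infty$ along $P_t^{-1}(\rho)$ with $\eta_0=u_0(\rho_0^+)\to 0$. In either case the foot mass $m_0(\rho_0)$ tends to the total mass $M=\lim_{\rho\to+\infty}m_0(\rho)$ (in the compact-support case $m_0$ is already equal to $M$ for $\rho_0\ge L$, and in the infinite-support case $m_0(\rho_0)\nearrow M$ by monotone convergence, as noted in the proof of the bijectivity).

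Second I would pass to the limit in the explicit formula \eqref{eq:mass rarefaction fan decreasing data}. The only subtlety is the correction factor $(1+\alpha\eta_0^\alpha t)^{1-\frac1\alpha}$. Since $0<\alpha<1$ the exponent $1-\frac1\alpha$ is negative, so as $\eta_0\to 0$ we have $\alpha\eta_0^\alpha t\to 0$ and hence $(1+\alpha\eta_0^\alpha t)^{1-\frac1\alpha}\to 1$. Therefore
\begin{equation*}
	\lim_{\rho\to+\infty} m(t,\rho)
	= \lim_{\rho\to+\infty} m_0(\rho_0)\,(1+\alpha\eta_0^\alpha t)^{1-\frac1\alpha}
	= M\cdot 1 = M = \lim_{\rho\to+\infty} m_0(\rho),
\end{equation*}
which is exactly the claimed conservation of mass. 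This is essentially the one-line argument the paper indicates, combining \eqref{eq:mass rarefaction fan decreasing data} with \Cref{rem:P_t as rho to infinity}.

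I expect the only genuine obstacle to be making the limit of the foot data rigorous, i.e. justifying that $\eta_0\to 0$ and $m_0(\rho_0)\to M$ simultaneously along $P_t^{-1}(\rho)$ as $\rho\to+\infty$. This rests on the monotonicity of $P_t$ with respect to the total order $\prec$ established in \Cref{eq:rarefaction fan solution discontinuous} together with the surjectivity statement $P_t(\rho_0,u_0(\rho_0^+))\nearrow+\infty$; once that monotone behaviour of the inverse is invoked, the vanishing of $\eta_0$ and the convergence $m_0(\rho_0)\to M$ follow, and the remaining limit is the elementary computation above. Everything else is a direct substitution into the closed-form expression for $m$.
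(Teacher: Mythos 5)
Your proposal is correct and is exactly the paper's argument: the paper's one-line proof ``combine \eqref{eq:mass rarefaction fan decreasing data} with \Cref{rem:P_t as rho to infinity}'' unpacks precisely into your two steps, namely that the foot data $(\rho_0,\eta_0)=P_t^{-1}(\rho)$ satisfies $\eta_0\to 0$ and $m_0(\rho_0)\to M$ as $\rho\to+\infty$, and that the correction factor $(1+\alpha\eta_0^\alpha t)^{1-\frac 1\alpha}\to 1$ since $\alpha\eta_0^\alpha t\to 0$. Nothing is missing; you have simply made explicit the details the paper leaves implicit.
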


\subsubsection{Comparison principle for $u$ by characteristics}
 Since we have classical solutions by characteristics, we prove a comparison principle for $u = m_\rho$ by direct computation. This immediately implies there exists some kind of comparison principle for $m$.
\begin{theorem}
	\label{thm:comparison principle characteristics}
	Assume that $u_{0,1}, u_{0,2} \in L^\infty (\mathbb R^d)$ radially non-increasing and such that such that $u_{0,1} \ge u_{0,2}$ in $\mathbb R^d$. Let $u_1, u_2$ be given by \eqref{eq:solution radial nonincreasing discontinuous data}. Then $u_1 \ge  u_2$ in $[0,+\infty)\times \mathbb R^d$.
\end{theorem}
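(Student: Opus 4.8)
The plan is to reduce the pointwise inequality $u_1(t,\rho) \ge u_2(t,\rho)$ to a comparison of the initial values carried by the characteristics arriving at $(t,\rho)$, and then to phrase that comparison through super-level sets so that the monotonicity of the data can be exploited directly. Fix $t>0$. By \Cref{eq:rarefaction fan solution discontinuous} each solution is given by $u_i(t,\rho) = (\eta_{0,i}^{-\alpha} + \alpha t)^{-1/\alpha}$, where $(\rho_{0,i},\eta_{0,i}) = P_{t}^{-1}(\rho)$ is computed from $u_{0,i}$. Since $s \mapsto (s^{-\alpha}+\alpha t)^{-1/\alpha}$ is strictly increasing on $(0,\infty)$, the claim is equivalent to $\eta_{0,1}(t,\rho) \ge \eta_{0,2}(t,\rho)$ for every $\rho$. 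Moreover, the total order $\prec$ used in the proof of \Cref{eq:rarefaction fan solution discontinuous} shows that $\eta_{0,i}(t,\cdot)$, and hence $u_i(t,\cdot)$, is non-increasing in $\rho$.

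Since both $u_i(t,\cdot)$ are non-increasing, it suffices to show that their super-level sets nest: for each $c>0$, $\{\rho : u_1(t,\rho) \ge c\} \supseteq \{\rho : u_2(t,\rho)\ge c\}$. Now $u_i(t,\rho) \ge c$ iff $\eta_{0,i}(t,\rho) \ge \bar c$, where $\bar c \defeq (c^{-\alpha} - \alpha t)^{-1/\alpha}$ is the initial level that flows to $c$ under the friendly-giant ODE $g' = -g^{\alpha+1}$. As $\eta_{0,i}(t,\cdot)$ is non-increasing, each super-level set is an interval $[0,\rho_i^\ast]$ whose right endpoint is the image under $P_t$ (built from $u_{0,i}$) of the transition point $\sigma_i \defeq \sup\{\rho_0 : u_{0,i}(\rho_0) \ge \bar c\}$, namely
\begin{equation*}
	\rho_i^\ast = \sigma_i + \alpha\, m_{0,i}(\sigma_i)\, \bar c^{\,\alpha-1} t.
\end{equation*}
(When $c \ge (\alpha t)^{-1/\alpha}$ the value $\bar c$ is undefined and both level sets are empty by the friendly-giant bound; when $\bar c > u_{0,i}(0^+)$ the corresponding set is empty. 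These degenerate cases are trivial.)

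It then remains to prove $\rho_1^\ast \ge \rho_2^\ast$. From $u_{0,1} \ge u_{0,2}$ the initial super-level sets nest, giving $\sigma_1 \ge \sigma_2$; and integrating $u_{0,1} \ge u_{0,2}$ gives $m_{0,1}(\rho) \ge m_{0,2}(\rho)$ for all $\rho$. Combining these with the monotonicity of $\rho \mapsto m_{0,1}(\rho)$ yields $m_{0,1}(\sigma_1) \ge m_{0,1}(\sigma_2) \ge m_{0,2}(\sigma_2)$, so each term of $\rho_1^\ast$ dominates the corresponding term of $\rho_2^\ast$ and hence $\rho_1^\ast \ge \rho_2^\ast$. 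This nesting, valid for every $c$, gives $u_1(t,\cdot) \ge u_2(t,\cdot)$, as required.

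I expect the main obstacle to be the careful bookkeeping at jumps of $u_{0,i}$: verifying that the transition point $\sigma_i$ really lands in the domain of $P_t$ (that is, that $\bar c \in [u_{0,i}(\sigma_i^+), u_{0,i}(\sigma_i^-)]$ with $u_{0,i}(\sigma_i^-)>0$), that the right endpoint of the super-level set is exactly $P_t(\sigma_i,\bar c)$ and is not shifted by a neighbouring flat piece or by an initial gap as in \Cref{sec:data with initial gap}, and that the degenerate cases (compact support, $\bar c$ lying at a plateau value, or $c$ above the friendly-giant threshold) remain consistent with the front-position formula. Once this level-set/front-position correspondence is pinned down, the monotonicity comparison itself is immediate from $u_{0,1}\ge u_{0,2}$.
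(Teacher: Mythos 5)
Your proof is correct, and it takes a genuinely different route from the paper's. The paper argues by contradiction at a fixed point: assuming $u_1(t,\rho) < u_2(t,\rho)$, it deduces $\eta_{0,1} < \eta_{0,2}$ from the strict monotonicity of $s \mapsto (s^{-\alpha}+\alpha t)^{-1/\alpha}$, then $\rho_{0,1} \ge \rho_{0,2}$ and $m_{0,1}(\rho_{0,1}) \ge m_{0,2}(\rho_{0,2})$ from the ordering of the data, and finally reads the characteristic formula for the same $\rho$ through both data, obtaining a strict inequality between two quantities that are both equal to $\rho$ --- a contradiction; the degenerate case $m_{0,2}(\rho_{0,2})=0$ must be treated separately (there $\rho=0$ and one compares the data at the origin). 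You instead fix a level rather than a point, and compare the front positions $\rho_i^\ast = P_t(\sigma_i,\bar c)$ of the super-level sets at the common pulled-back level $\bar c$. This is dual to the paper's argument and uses the same ingredients (formula \eqref{eq:solution radial nonincreasing discontinuous data}, the strictly increasing bijection $P_t$ of \Cref{eq:rarefaction fan solution discontinuous}, and the orderings $\sigma_1 \ge \sigma_2$, $m_{0,1} \ge m_{0,2}$), but it buys two things: the argument is direct rather than by contradiction, and, since $\bar c^{\,\alpha-1}$ appears as a common factor on both sides, the inequality-reversing negative exponent $\alpha-1$ --- which the paper must exploit via $\eta_{0,1}<\eta_{0,2} \Rightarrow \eta_{0,1}^{\alpha-1}>\eta_{0,2}^{\alpha-1}$, and which forces its separate zero-mass case --- never enters: your comparison is just a sum of two monotone comparisons. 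What the paper's version buys in exchange is brevity, since it never needs to identify level sets as intervals with explicit endpoints. The bookkeeping you flag as the main obstacle does go through routinely: because $u_{0,i}$ is non-increasing and $\sigma_i$ is the supremum of $\{\rho_0 : u_{0,i}(\rho_0)\ge\bar c\}$, one automatically has $u_{0,i}(\sigma_i^+)\le\bar c\le u_{0,i}(\sigma_i^-)$, so $(\sigma_i,\bar c)$ lies in the domain of $P_t$ and is the $\prec$-maximum of the down-set $\{(\rho_0,\eta_0):\eta_0\ge\bar c\}$; as $P_t$ is a strictly increasing bijection onto $[0,+\infty)$, the image of this down-set is exactly $[0,P_t(\sigma_i,\bar c)]=[0,\rho_i^\ast]$. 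Finally, the initial-gap scenario you worry about cannot occur under the theorem's hypothesis that the data are radially non-increasing.
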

\begin{proof}
	Assume, towards a contradiction, that $u_1(t,\rho) < u_2(t,\rho)$ for some $t > 0 , \rho$.
	Since the functions are given by \eqref{eq:solution radial nonincreasing discontinuous data} we write this inequality as
	\begin{equation}
		(\eta_{0,1}^{-\alpha} + \alpha t)^{-\frac 1 \alpha} <(\eta_{0,2}^{-\alpha} + \alpha t)^{-\frac 1 \alpha},
	\end{equation}
	where $\eta_{0,i} \in [u_{0,i} (\rho_{0,i}^+),u_{0,i}(\rho_{0,i}^-)]$.
	Therefore, we have that $\eta_{0,1} < \eta_{0,2}$.
	Since $u_{0,1} \ge  u_{0,2}$ we have that $\rho_{0,2} \le \rho_{0,1}$. Since $m_{0,2} \le m_{0,1}$ we have that $m_{0,2}(\rho_{0,2} ) \le m_{0,1}(\rho_{0,1} )$.
	
	If $m_{0,2}(\rho_{0,2} ) =0$, then $\rho = \rho_{0,2} = 0$, but at $\rho = 0$ we have $u_2(t,\rho) = u_2(t,0) = u_{0,2} (0) \ge u_{0,1} (0) = u_1(t,0)$ and we reach a contradiction.
	
	If $m_{0,2}(\rho_{0,2} ) > 0$, we have that
	\begin{equation}
		\rho = \rho_{0,1} + \alpha m_{0,1} (\rho_{0,1}) \eta_{0,1} ^{\alpha - 1} t  < \rho_{0,2} + \alpha m_{0,2} (\rho_{0,2}) \eta_{0,2} ^{\alpha - 1} t = \rho.
	\end{equation}
	This is a contradiction.
	\end{proof}

\subsection{Asymptotic behaviour}
\label{ssec:characteristics decreasing asymptotic}

For the study of the asymptotic behaviour we consider a rescaled version of the solution $u$ by considering the scaling the self-similar solution
\begin{equation}
	\label{eq:profile of u}
	w(t,y) = t^{\frac 1 \alpha} u( t , t ^{\frac 1 {d \alpha}} y ).
\end{equation}
This is the natural candidate to converge to a stationary non-trivial profile. We will prove stabilisation of the rescaled flow in the strong form of uniform convergence in relative error to the self-similar profile of the solution of same total mass $M$, i.e., $F_M$.

\subsubsection{Asymptotic behaviour as $t \to \infty$ for non-increasing data $u_0$}
\label{sec:asymp in t non-increasing u_0}
We  tackle the general case for solutions given by characteristics and state the convergence in relative error
\begin{theorem}
	\label{thm:asymptotics t non-increasing u_0}
	Let $u_0 \in L^\infty_c (\mathbb R^d)$ radially non-increasing, $M = \| u_0 \|_{L^1}$, and let $u$ be given by \eqref{eq:solution radial nonincreasing discontinuous data}. Then, we have that
	\begin{equation}
		\label{eq:asymp uniform convergence in relative error}
		\sup_{y \in \mathbb R^d} \left| \frac{  w(t,y)   -  F_M \left (  {|y|}  \right ) }{F_M \left ( {|y|}  \right )}\right|  \longrightarrow 0 ,  \qquad \textrm{as } t \to +\infty.
	\end{equation}	
	where $w$ is given by \eqref{eq:profile of u}.
\end{theorem}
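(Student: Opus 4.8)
The plan is to use that, by \Cref{eq:rarefaction fan solution discontinuous}, the rescaled solution is known \emph{explicitly} along characteristics, and to reduce the comparison with $F_M$ to a single scalar. Introduce the rescaled volume variable $\xi \defeq \omega_d|y|^d$, so that the point $t^{1/(d\alpha)}y$ appearing in \eqref{eq:profile of u} has volume coordinate $\rho = t^{1/\alpha}\xi$; let $(\rho_0,\eta_0) = P_t^{-1}(\rho)$ be the characteristic data attached to it. Writing $\theta \defeq (1-\alpha)/\alpha > 0$ and $q \defeq t^{-1}\eta_0^{-\alpha}$, formula \eqref{eq:solution radial nonincreasing discontinuous data} gives $w(t,y) = (\alpha + q)^{-1/\alpha}$, while by construction $F_M(|y|) = (\alpha + q_\infty)^{-1/\alpha}$ with $q_\infty \defeq (\xi/(\alpha M))^{1/\theta}$. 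Consequently
\[
\frac{w(t,y)}{F_M(|y|)} = \left(\frac{\alpha + q_\infty}{\alpha + q}\right)^{1/\alpha},
\]
so \eqref{eq:asymp uniform convergence in relative error} amounts to showing that $q$ and $q_\infty$ become uniformly close on the right scale.

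Next I would extract an exact identity between $q$ and $q_\infty$ from the characteristic \eqref{eq:characteristic}. Since $q^\theta = t^{-\theta}\eta_0^{\alpha-1}$, dividing $\rho = \rho_0 + \alpha m_0(\rho_0)\eta_0^{\alpha-1}t$ by $t^{1/\alpha}$ and recalling $\xi = \alpha M q_\infty^\theta$ gives
\[
q_\infty^\theta = \frac{m_0(\rho_0)}{M}\,q^\theta + \frac{\rho_0}{\alpha M}\,t^{-1/\alpha}.
\]
As $m_0(\rho_0)\le M$ and $\rho_0 \ge 0$, this controls $|q^\theta - q_\infty^\theta|$ by the two defects on the right-hand side, namely the mass deficit $M - m_0(\rho_0)$ and the vanishing term $\rho_0\, t^{-1/\alpha}$.

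The third step bounds these defects uniformly, and this is where the hypotheses enter. Since $u_0\in L^\infty_c$ is radially non-increasing, its support has finite volume $R<\infty$, i.e.\ $u_0 = 0$ on $\{\rho > R\}$, and monotonicity gives $M - m_0(\rho_0) = \int_{\rho_0}^{R} u_0 \le \eta_0(R-\rho_0) \le \eta_0 R$. Together with $q^\theta = t^{-\theta}\eta_0^{\alpha-1}$ and $\eta_0 \le \|u_0\|_{L^\infty}$ this yields
\[
\sup_{y\in\mathbb{R}^d} \bigl| q^\theta - q_\infty^\theta \bigr| \le \frac{R\,\|u_0\|_{L^\infty}^\alpha}{M}\,t^{-\theta} + \frac{R}{\alpha M}\,t^{-1/\alpha} \eqqcolon \delta(t) \xrightarrow{\ t\to\infty\ } 0,
\]
the supremum being taken over all $y$, equivalently over all admissible $(\rho_0,\eta_0)$ via the bijectivity of $P_t$ from \Cref{eq:rarefaction fan solution discontinuous}. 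It is crucial here that $R < \infty$ and that monotonicity lets the tail mass be dominated by $\eta_0$.

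Finally, the uniform smallness of $q^\theta - q_\infty^\theta$ must be upgraded to uniform smallness of the relative error of $w/F_M$, which is precisely the role of \Cref{lem:relative error with self-similar power alpha}: for $a,b\ge0$ with $|a-b|\le\delta$ one has $|(\alpha+b^{1/\theta})/(\alpha+a^{1/\theta}) - 1|$ bounded by a modulus $\omega(\delta)\to0$, uniformly in $a,b$. Applying it with $a = q^\theta$, $b = q_\infty^\theta$ and raising to the power $1/\alpha$ yields \eqref{eq:asymp uniform convergence in relative error}. I expect this last uniformity across all scales of $\xi$ to be the main obstacle: because $F_M(|y|)\to0$ as $|y|\to\infty$, the comparison cannot be absolute and must be relative, and for $\alpha > \tfrac12$ (so that $1/\theta > 1$ and $x\mapsto x^{1/\theta}$ is convex) the bound in \Cref{lem:relative error with self-similar power alpha} requires splitting the range of $a$ according to whether $a^{1/\theta}$ is small or large relative to $\alpha$. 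All other ingredients — the explicit characteristic formula, the invertibility of $P_t$, and mass conservation (\Cref{prop:characteristics mass conservation}) — are already available.
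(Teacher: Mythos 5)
Your argument is correct, and it takes a genuinely different route from the paper's. The paper splits $\mathbb R^d$ into $\{|y|\le \delta\}$ and $\{|y|\ge\delta\}$: away from the origin it combines the exact formula of \Cref{lem:relative error with self-similar power alpha} with \Cref{lem:rho_0 in rescaled variable} (which gives $\rho_0(t,y)\to\rho_*$ uniformly only for $|y|\ge\delta$), near the origin it runs a separate sandwich argument using radial monotonicity of $w$ and the limit $w(t,0)\to F_M(0)$, and the two power scales ($w^\alpha$ versus $w$) are reconciled by the intermediate value theorem. You dispense with the region splitting entirely: your identity $q_\infty^\theta=\frac{m_0(\rho_0)}{M}q^\theta+\frac{\rho_0}{\alpha M}t^{-1/\alpha}$ reduces everything to bounding $(M-m_0(\rho_0))\,q^\theta$, and the key cancellation $(M-m_0(\rho_0))\,q^\theta\le R\eta_0\cdot t^{-\theta}\eta_0^{\alpha-1}=R\,\eta_0^{\alpha}\,t^{-\theta}\le R\|u_0\|_{L^\infty}^{\alpha}\,t^{-\theta}$ is uniform over all characteristics, including those landing near the origin, where the mass deficit is of order one but $\eta_0^{\alpha}$ stays bounded --- precisely the case the paper must treat separately. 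This buys you a shorter proof with an explicit rate $\delta(t)=O(t^{-\theta})$; what the paper's structure buys is reusability, since its two lemmas serve again in \Cref{thm:non-decreasing analysis of the tails} and in the gap case of \Cref{sec:asymp in t non-increasing u_0 with gap}, which your self-contained computation would not. Two caveats, neither fatal: (i) the final conversion step is \emph{not} the role of \Cref{lem:relative error with self-similar power alpha} --- that lemma is the paper's exact formula for the $\alpha$-power relative error, not the two-variable estimate you state; what you need is the elementary fact that $\sup\bigl\{\bigl|\frac{\alpha+b^{1/\theta}}{\alpha+a^{1/\theta}}-1\bigr| : a,b\ge0,\ |a-b|\le\delta\bigr\}\to0$ as $\delta\to0$, which is true (Hölder continuity of $x\mapsto x^{1/\theta}$ when $\alpha\le\frac12$; for $\alpha>\frac12$ the mean value theorem gives $|b^{1/\theta}-a^{1/\theta}|\le\frac1\theta\max(a,b)^{1/\theta-1}\delta$ and the factor $\max(a,b)^{1/\theta-1}$ is absorbed by the denominator $\alpha+a^{1/\theta}$ whether $a\le1$ or $a\ge1$), and your sketch is essentially this, so it should be stated and proved as a standalone lemma rather than attributed to the paper; (ii) \Cref{prop:characteristics mass conservation} is not actually needed --- all you use is $m_0(\rho_0)\le M$ and $\rho_0\le R$, which are immediate from monotonicity and compact support.
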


As as direct consequence of this theorem, we have the $L^\infty$ convergence with sharp rate
\begin{corollary} Under the same assumptions we have
\begin{equation}
\lim_{t\to\infty}t^{1/\alpha}|u(t,x)-U_M(t,x)|=0
\end{equation}
uniformly in $x\in \mathbb R^d$.
\end{corollary}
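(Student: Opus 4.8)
The plan is to deduce the $L^\infty$ statement directly from the relative-error convergence of \Cref{thm:asymptotics t non-increasing u_0} by undoing the self-similar rescaling and using that the limit profile $F_M$ is bounded. First I would record the self-similar form of the target: combining \eqref{eq:solution is self-similar form} (with $\gamma = 1/\alpha$, $\beta = 1/(d\alpha)$) and the mass rescaling that produced \eqref{eq:self-sim solution of mass M}, one has $U_M(t,x) = t^{-1/\alpha} F_M(|x|\,t^{-1/(d\alpha)})$, while by definition $w(t,y) = t^{1/\alpha} u(t, t^{1/(d\alpha)} y)$. Performing the change of variables $x = t^{1/(d\alpha)} y$, i.e.\ $y = x\,t^{-1/(d\alpha)}$, I would obtain the two identities
\begin{equation*}
t^{1/\alpha} u(t,x) = w(t,y), \qquad t^{1/\alpha} U_M(t,x) = F_M(|y|),
\end{equation*}
so that $t^{1/\alpha} |u(t,x) - U_M(t,x)| = |w(t,y) - F_M(|y|)|$. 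Since $x \mapsto y = x\,t^{-1/(d\alpha)}$ is a bijection of $\Rd$ for each fixed $t > 0$, taking suprema preserves the equality: $\sup_{x} t^{1/\alpha}|u - U_M| = \sup_{y} |w - F_M|$.

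Next I would reintroduce the relative error by factoring out $F_M$. Using that $F_M$ is positive and radially non-increasing, so that $F_M(|y|) \le F_M(0) = \alpha^{-1/\alpha} < \infty$ for all $y$ (the value recorded for the profile in \Cref{sec:explicit solutions}), I would estimate
\begin{equation*}
|w(t,y) - F_M(|y|)| = F_M(|y|)\left| \frac{w(t,y) - F_M(|y|)}{F_M(|y|)} \right| \le \alpha^{-1/\alpha}\, \sup_{y \in \Rd} \left| \frac{w(t,y) - F_M(|y|)}{F_M(|y|)} \right|.
\end{equation*}
The supremum on the right is exactly the quantity shown to vanish as $t \to \infty$ in \eqref{eq:asymp uniform convergence in relative error}, and the prefactor $\alpha^{-1/\alpha}$ is a fixed finite constant independent of $t$. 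Chaining this with the previous identity yields
\begin{equation*}
\sup_{x \in \Rd} t^{1/\alpha} |u(t,x) - U_M(t,x)| \le \alpha^{-1/\alpha} \sup_{y \in \Rd} \left| \frac{w(t,y) - F_M(|y|)}{F_M(|y|)} \right| \longrightarrow 0,
\end{equation*}
which is the asserted uniform convergence with rate $t^{1/\alpha}$.

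There is essentially no analytic obstacle here, since the corollary is a purely algebraic consequence of the stronger relative-error theorem. The only points requiring care are checking that the self-similar change of variables is carried out consistently on both $u$ and $U_M$, and that $F_M$ attains its maximum at the origin so that the relative error can be upgraded to an absolute one by a single constant factor. I would emphasise that the weight $t^{1/\alpha}$ is sharp precisely because it is the exponent that cancels the temporal decay of the self-similar solution, turning uniform control of the relative error into uniform control of the weighted $L^\infty$ difference; no decay in space of $F_M$ is even needed, only its boundedness.
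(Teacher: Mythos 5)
Your proof is correct and is essentially the paper's own argument: the paper states the corollary as a direct consequence of \Cref{thm:asymptotics t non-increasing u_0}, precisely because $t^{1/\alpha}|u(t,x)-U_M(t,x)| = F_M(|y|)\,\bigl|\tfrac{w(t,y)-F_M(|y|)}{F_M(|y|)}\bigr|$ under the self-similar change of variables and $F_M \le F_M(0)=\alpha^{-1/\alpha}$ is bounded. Your write-up simply makes these (intentionally omitted) steps explicit, including the correct observation that only boundedness of $F_M$, not its spatial decay, is needed.
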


We split the proof of Theorem \ref{thm:asymptotics t non-increasing u_0} into several lemmas

\begin{lemma}
	\label{lem:relative error with self-similar power alpha}
	Let $u_0$ be radially nonincreasing and let $ \omega_d |x|^d= \rho = \rho_0 + \alpha m_0 (\rho_0) \eta_0^{\alpha - 1} t $. Then,
	\begin{equation}\label{formula}
		\frac{u(t,x) ^{\alpha} - U_M( t, x  ) ^{\alpha} }{U_M(t,x) ^{\alpha}} =  \left[1 - \left( \frac{ \rho / ( \rho - \rho_0)  }  {  M / m_0(\rho_0) } \right)^{\frac {\alpha} {1-\alpha }}\right] \left[1+\alpha \left( \frac{ \rho-\rho_0  } { \alpha m_0(\rho_0) t ^{\frac 1 \alpha} } \right)^{-\frac {\alpha} {1-\alpha }}  \right]^{-1}.
	\end{equation}
\end{lemma}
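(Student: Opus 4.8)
The plan is to substitute the two explicit expressions and collapse the relative error into a single quotient. Raising the characteristic formula \eqref{eq:solution radial nonincreasing discontinuous data} to the power $\alpha$ gives $u(t,x)^\alpha = (\eta_0^{-\alpha} + \alpha t)^{-1}$, and raising the self-similar profile \eqref{eq:self-sim solution of mass M} gives, with $\rho = \omega_d|x|^d$, an expression of the form $U_M(t,x)^\alpha = (\alpha t + C)^{-1}$ where $C$ is the space-dependent term $t\big(\tfrac{\rho\, t^{-1/\alpha}}{\alpha M}\big)^{\alpha/(1-\alpha)}$. Abbreviating also $B \defeq \eta_0^{-\alpha}$, both profiles have the shape $(\,\cdot\, + \alpha t)^{-1}$, so the relative error telescopes:
\begin{equation*}
	\frac{u^\alpha - U_M^\alpha}{U_M^\alpha} = \frac{u^\alpha}{U_M^\alpha} - 1 = \frac{C + \alpha t}{B + \alpha t} - 1 = \frac{C - B}{B + \alpha t}.
\end{equation*}

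The second step is to rewrite $B$ and $C$ purely in terms of $\rho$, $\rho_0$, $m_0(\rho_0)$ and $t$. From the characteristic relation $\rho - \rho_0 = \alpha m_0(\rho_0)\,\eta_0^{\alpha-1} t$ I solve for $\eta_0^{1-\alpha}$ and raise to the exponent $\tfrac{\alpha}{1-\alpha}$, which yields $B = \eta_0^{-\alpha} = \big(\tfrac{\rho-\rho_0}{\alpha m_0(\rho_0) t}\big)^{\alpha/(1-\alpha)}$. Collecting the powers of $t$ inside $C$ (using $1 - \tfrac{1}{1-\alpha} = -\tfrac{\alpha}{1-\alpha}$) gives $C = \big(\tfrac{\rho}{\alpha M t}\big)^{\alpha/(1-\alpha)}$. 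Both space terms now carry the same exponent $\tfrac{\alpha}{1-\alpha}$, which is what makes the final factorisation possible.

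Third, I factor $B$ out of both numerator and denominator of $(C-B)/(B+\alpha t)$. The numerator $C - B = B(\tfrac{C}{B} - 1)$ produces, up to the overall sign, the first bracket of \eqref{formula}, because the ratio simplifies cleanly to $\tfrac{C}{B} = \big(\tfrac{\rho\, m_0(\rho_0)}{(\rho-\rho_0)M}\big)^{\alpha/(1-\alpha)} = \big(\tfrac{\rho/(\rho-\rho_0)}{M/m_0(\rho_0)}\big)^{\alpha/(1-\alpha)}$ (the powers of $t$ and of $\alpha$ cancel identically). The denominator $B + \alpha t = B(1 + \alpha t/B)$ produces the reciprocal factor $(1 + \alpha t/B)^{-1}$, and it remains to recognise $\alpha t/B$ as the term $\alpha\big(\tfrac{\rho-\rho_0}{\alpha m_0(\rho_0) t^{1/\alpha}}\big)^{-\alpha/(1-\alpha)}$ appearing in the second bracket.

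The only delicate part, and the step I expect to be the main obstacle, is this last exponent bookkeeping: $B$ contains a genuine factor $t$ while the self-similar normalisation in the second bracket is written with $t^{1/\alpha}$, so I must verify the identity $t\,B^{-1} = \big(\tfrac{\rho-\rho_0}{\alpha m_0(\rho_0) t^{1/\alpha}}\big)^{-\alpha/(1-\alpha)}$. Concretely, writing the right-hand side as $\big(\tfrac{\alpha m_0(\rho_0)}{\rho-\rho_0}\big)^{\alpha/(1-\alpha)} t^{\frac{1}{\alpha}\cdot\frac{\alpha}{1-\alpha}}$ and comparing with $t\,B^{-1} = \big(\tfrac{\alpha m_0(\rho_0)}{\rho-\rho_0}\big)^{\alpha/(1-\alpha)} t^{\,1 + \frac{\alpha}{1-\alpha}}$, one checks that the two $t$-exponents agree, since $\tfrac{1}{1-\alpha} = 1 + \tfrac{\alpha}{1-\alpha}$. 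Getting these fractional powers of $t$ (and the overall sign of the first bracket) to line up exactly is where a slip is most likely, so I would confirm this single scalar identity carefully and then read off \eqref{formula} directly.
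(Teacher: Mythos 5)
Your proposal is correct and follows essentially the same route as the paper: substitute the two explicit formulas, pass to the power $-\alpha$ so that the $\alpha t$ terms cancel, and factor out $B=\eta_0^{-\alpha}$; the $t$-exponent identity you single out at the end is precisely the bookkeeping the paper performs (it writes the denominator term as $\left(\frac{\rho-\rho_0}{\alpha m_0(\rho_0)}\right)^{-\frac{\alpha}{1-\alpha}}t^{\frac{1}{1-\alpha}}$, which is the same quantity).

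On the one point you left open --- the overall sign of the first bracket --- your algebra is right and the slip is in the paper, not in your computation. The paper's proof passes through the identity $\frac{u^{\alpha}-U_M^{\alpha}}{U_M^{\alpha}}=\frac{u^{-\alpha}-U_M^{-\alpha}}{u^{-\alpha}}$, which is false as written: the two sides are negatives of each other. The correct version is the one you used,
\begin{equation*}
	\frac{u^{\alpha}-U_M^{\alpha}}{U_M^{\alpha}}=\frac{U_M^{-\alpha}-u^{-\alpha}}{u^{-\alpha}}=\frac{C-B}{B+\alpha t},
\end{equation*}
where $C$ denotes the space-dependent term of $U_M^{-\alpha}$. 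Hence the first bracket of \eqref{formula} should read $\left(\frac{\rho/(\rho-\rho_0)}{M/m_0(\rho_0)}\right)^{\frac{\alpha}{1-\alpha}}-1$, i.e.\ the negative of what is printed; equivalently, \eqref{formula} as printed computes $(U_M^{\alpha}-u^{\alpha})/U_M^{\alpha}$. This discrepancy is immaterial for the sequel, since the lemma is only ever invoked through absolute values, as in \eqref{eq:relative comparison v and F power alpha}, so both your derivation and the paper's later conclusions survive intact.
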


\begin{proof}
We have that
	\begin{equation}
		 u(x  , t) = \left(  \eta_0 ^{-\alpha} + \alpha t  \right)^{-\frac 1 \alpha}
	\end{equation}
	where $\eta_0$ is given by \eqref{eq:characteristic}. Going back to \eqref{eq:self-sim solution of mass M} we have
	\begin{equation}
		 u(t,x) ^{-\alpha} - U_M(t, x) ^{-\alpha}
		 = \left[\left( \frac{ \rho-\rho_0  } { \alpha m_0(\rho_0) } \right)^{\frac {\alpha} {1-\alpha }} - \left( \frac{ \rho} { \alpha M} \right)^{\frac {\alpha} {1-\alpha }} \right] t^{-\frac \alpha {1-\alpha}}
	\end{equation}
	and hence
	\begin{align*}
		\frac{u(t,x) ^{\alpha} - U_M(t,x) ^{\alpha} }{U_M(t,x) ^{\alpha}} & =  \frac{ u( x  , t ) ^{-\alpha} - U_M(t,x) ^{-\alpha}}{u(t,x) ^{-\alpha}} \\
		&= \left[1 - \left( \frac{ \rho / ( \rho - \rho_0)  }  {  M / m_0(\rho_0) } \right)^{\frac {\alpha} {1-\alpha }}\right] \left[1+\alpha \left( \frac{ \rho-\rho_0  } { \alpha m_0(\rho_0) } \right)^{-\frac {\alpha} {1-\alpha }} t^{\frac 1 {1-\alpha}} \right]^{-1} . \qedhere
	\end{align*}
\end{proof}
Equation \eqref{formula} looks better in rescaled variables
\begin{align}
	\nonumber
	\left| \frac{w(t,y)^\alpha - F_M(|y|)^\alpha}{F_M(|y| )^\alpha} \right|
	&=   \left| \left[ 1 - \left( \frac{ |y|^d } { |y|^d - \rho_0(t,y) t^{-\frac 1 \alpha}   }  \frac { m_0(\rho_0(t,y)) }{M}  \right)^{\frac {\alpha} {1-\alpha }}\right]  \right| \\
	\nonumber
	&\quad \times  \left[1+\alpha \left( \frac{ |y|^d -\rho_0 (t,y) t^{-\frac 1 \alpha}  } { \alpha m_0(\rho_0 (t,y) ) } \right)^{-\frac {\alpha} {1-\alpha }}  \right]^{-1} \\
	\label{eq:relative comparison v and F power alpha}
	&\le  \left| 1 - \left( \frac{ |y|^d } { |y|^d - \rho_0(t,y) t^{-\frac 1 \alpha}   }  \frac { m_0(\rho_0(t,y)) }{M}  \right)^{\frac {\alpha} {1-\alpha }}\right|
\end{align}
where $\rho_0$ represents the foot of the rarefaction fan solution, i.e. in the notation of \Cref{ssect.rf}
\begin{equation*}
	\rho_0(t,y) = P_t^{-1} (t^{\frac 1 \alpha} y).
\end{equation*}
For compactly supported $u_0$, we have that $\rho_0 (t,y)$ is bounded and the first fraction tends to $1$ uniform in $t$. For the second fraction we need to now whether $m_0(\rho_0(t,y)) \to M$ when $t \to + \infty$.

\begin{lemma}
	\label{lem:rho_0 in rescaled variable}
	Let $u_0$ be radially non-increasing bounded and compactly supported. Let $\supp u_0=  [0,\rho_*]$. Then, for every $t \ge 0$, we have that
	\begin{equation*}
		\rho_0 (t,y) \to \rho_* \qquad \text{as } t \to +\infty \text{ uniformly for } |y| \ge \delta .
	\end{equation*}
\end{lemma}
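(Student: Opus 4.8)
The plan is to read off the behaviour of the foot $\rho_0(t,y)$ directly from the characteristic identity that defines it. Writing $\sigma = \omega_d |y|^d$, the rescaling \eqref{eq:profile of u} means the point under consideration corresponds to volume $\rho = t^{1/\alpha}\sigma$, so that the pair $(\rho_0,\eta_0) = P_t^{-1}(t^{1/\alpha}\sigma)$ satisfies
\[
	t^{\frac1\alpha}\sigma = \rho_0 + \alpha\, m_0(\rho_0)\, \eta_0^{\alpha-1}\, t .
\]
For $|y|\ge\delta$ we have $\sigma \ge \omega_d\delta^d =: \sigma_\delta > 0$, so the left-hand side grows like $t^{1/\alpha}$, which for $0<\alpha<1$ is strictly faster than the linear-in-$t$ growth that a characteristic with bounded mass and bounded $\eta_0$ could produce. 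The only term that can absorb the discrepancy is $\eta_0^{\alpha-1}$, and since $\alpha-1<0$ this forces $\eta_0\to 0$. The strategy is therefore: (i) quantify $\eta_0(t,y)\to 0$ uniformly in $|y|\ge\delta$, and (ii) transfer this to $\rho_0(t,y)\to\rho_*$ using only the monotonicity of $u_0$.

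For step (i) I would combine $\sigma\ge\sigma_\delta$ on the left with $\rho_0\le\rho_*$ and $m_0(\rho_0)\le M$ on the right, obtaining $t^{1/\alpha}\sigma_\delta \le \rho_* + \alpha M\eta_0^{\alpha-1}t$. Once $t$ is large enough that $t^{1/\alpha}\sigma_\delta \ge 2\rho_*$, this rearranges to $\alpha M\eta_0^{\alpha-1}t \ge \tfrac12 t^{1/\alpha}\sigma_\delta$, and raising to the negative power $-1/(1-\alpha)$ gives
\[
	\eta_0(t,y) \le \Big(\tfrac{2\alpha M}{\sigma_\delta}\Big)^{\frac{1}{1-\alpha}}\, t^{-\frac1\alpha}.
\]
The constant depends only on $\delta$, $\alpha$, $M$ and $\rho_*$, so $\eta_0\to 0$ uniformly for $|y|\ge\delta$. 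For step (ii), fix $\ee>0$. Because $u_0$ is non-increasing with $\supp u_0 = [0,\rho_*]$ it is strictly positive on $[0,\rho_*)$, so $c_\ee := u_0(\rho_*-\tfrac\ee2) > 0$, and monotonicity yields $u_0(\rho_0^+)\ge c_\ee$ whenever $\rho_0\le\rho_*-\ee$. Since $\eta_0\ge u_0(\rho_0^+)$ by construction of the rarefaction fan, the bound $\eta_0 < c_\ee$ forces $\rho_0 > \rho_*-\ee$; together with $\rho_0<\rho_*$ (the foot lies in the support) this gives $|\rho_0(t,y)-\rho_*|<\ee$ for all $t$ large and all $|y|\ge\delta$.

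The point requiring care is the uniformity in $y$: the threshold time beyond which $\eta_0 < c_\ee$ must be chosen independently of $y$ over the whole range $|y|\ge\delta$. This is exactly what the lower bound $\sigma\ge\sigma_\delta$ secures, since every estimate above depends on $y$ only through $\sigma_\delta$. The remaining ingredient is that $\rho_0(t,y)$ is genuinely single-valued, which is guaranteed by the bijectivity of $P_t$ proved in \Cref{eq:rarefaction fan solution discontinuous}; this removes any ambiguity in selecting the foot inside the rarefaction-fan region and makes the two steps above unconditional.
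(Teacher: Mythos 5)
Your proposal is correct and follows essentially the same route as the paper's proof: you bound the characteristic identity from below by $\omega_d\delta^d$ and from above using $\rho_0\le\rho_*$, $m_0(\rho_0)\le M$, solve for $\eta_0$ to obtain a uniform bound $\eta_0\le C(\delta,\alpha,M)\,t^{-1/\alpha}$, and then use the monotonicity of $u_0$ to push $\rho_0$ toward $\rho_*$. The only difference is cosmetic: the paper keeps the term $\omega_d\delta^d - t^{-1/\alpha}\rho_*$ in the denominator where you absorb it into a factor $2$, and your final step (the $c_\varepsilon = u_0(\rho_*-\tfrac\varepsilon2)>0$ argument converting $\eta_0\to 0$ into $\rho_0\to\rho_*$) is a careful spelling-out of what the paper compresses into the single sentence ``since $\eta_0\to 0$ uniformly and $u_0$ is radially decreasing.''
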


\begin{proof}
	We write
	\begin{equation*}
		\rho = \rho_0 + \alpha \eta_0^{\alpha - 1} m_0 (\rho_0) t, \qquad \eta_0 \in [ u_0(\rho_0^+), u_0(\rho_0^-)].
	\end{equation*}
	In order to recover the scaling factor, we multiply by $t^{-\frac 1 \alpha}$ and bound some terms from below and some from above
	\begin{equation*}
		\omega_d \delta^d \le \omega_d |y|^d =   t^{-\frac 1 \alpha} \rho_0 + \alpha \eta_0^{\alpha - 1} m_0 (\rho_0) t^{-\frac{1 - \alpha}{\alpha}}
		\le t^{-\frac 1 \alpha} \rho_* + \alpha \eta_0^{\alpha - 1} M t^{-\frac{1 - \alpha}{\alpha}} .
	\end{equation*}
	Hence,
	\begin{equation*}
		\eta_0 \le t^{-\frac 1 \alpha} \left(  \frac {\alpha M  }  { \omega_d \delta ^d - t^{-\frac 1 \alpha} \rho_* }  \right) ^{ \frac 1 {1 - \alpha}}
	\end{equation*}
	so long as $t$ is large enough that $\omega_d y_0^d > t^{-\frac 1 \alpha} \rho_*  $. Since $\eta_0 \to 0$ uniformly in $|y_0|$ and $u_0$ is radially decreasing, then $\rho_0(t,y) \to \rho_*$ uniformly in $\delta$.
\end{proof}

We can now prove the main theorem
\begin{proof}[Proof of \Cref{thm:asymptotics t non-increasing u_0}]
	Let $\ee > 0$.
	Since $F_M$ is continuous and $F_M > 0$, let us take $\delta > 0 $ such that, if
	\begin{align}
		\label{eq:asymp delta from F_M}
		\left| \frac{F_M(|y|) - F_M(0)}{F_M(0)} \right| \le\ee  \qquad \forall |y| \le \delta .
	\end{align}
	
	\noindent \textbf{Step 1. Close to $y = 0$. One sided bound.} We assume first that $|y| < \delta$. Since $v$ is non-increasing in $y$, we have that
	\begin{equation*}
		w(t, y_0) \le w(t,y) \le w(t,0), \qquad \forall |y| \le |y_0| = \delta .
	\end{equation*}
	On the one hand we notice that
	\begin{equation*}
		w(t,0) = t^{\frac 1 \alpha} u(t,0) = ( u_0 t^{-1} + \alpha )^{-\frac 1 \alpha} \to   \alpha ^{-\frac 1 \alpha} = F_M(0)
	\end{equation*}
	as $t \to +\infty$. Hence, there exists $t_1 > 0$ such that
	\begin{equation*}
		w(t,0) \le F_M(0) (1 + \ee)  \qquad \forall t \ge t_1 .
	\end{equation*}
Hence,
	\begin{equation*}
		w(t,y) \le w(t,0) \le F_M(0) (1+\ee) \le F_M(|y|) (1 + \ee)^2,
	\end{equation*}
	Therefore,
	 \begin{equation}
	 	\label{eq:asymp t upper bound close 0}
	 	\frac{w(t,y) - F_M(|y|)}{F_M(|y|)} \le 2 \ee + \ee ^2  \qquad \forall t \ge t_1, |y| \le \delta.
	 \end{equation}
	
		\noindent \textbf{Step 2. Away from $0$. $|y| \ge \delta$.}
	Through \Cref{lem:relative error with self-similar power alpha} in version \eqref{eq:relative comparison v and F power alpha} and \Cref{lem:rho_0 in rescaled variable} we have that
	\begin{equation*}
		\sup_{|y| \ge \delta } \left| \frac{w(t,y)^\alpha - F_M(|y|)^\alpha}{F_M(|y| )^\alpha} \right|  \to 0, \qquad \text{as } t \to +\infty.
	\end{equation*}
	Therefore, there exists $t_2 > 0$ dependent on $\delta$ such that
	\begin{equation*}
		\frac{1}{2} F_M (|y|)^\alpha  \le w(t,y)^\alpha \le \frac 3 2 F_M(|y|)^\alpha , \qquad \forall t \ge t_2 , |y| \ge \delta.
	\end{equation*}
	Taking roots
	\begin{equation*}
		\frac{1}{2^{ \frac 1 \alpha  }} F_M (|y|)  \le w(t,y) \le   \left( \frac { 3 } { 2 } \right)^{{ \alpha}} F_M(|y|)
	\end{equation*}
	Since we want to compare $v$ and $F_M$ rather than their power, we use the intermediate value theorem that gives
	\begin{equation*}
		w(t,y)^\alpha - F_M(|y|)^\alpha = \alpha \nu (t,y) ^{\alpha - 1} (w(t,y) - F_M(|y|))
	\end{equation*}
	where $\nu(t,y)$ is between $w(t,y)$ and $F_M(|y|)$. Therefore
	\begin{align}
	\nonumber
	\left| \frac{ w(t,y)  - F_M(|y|)}{F_M(|y|)} \right|  &= \frac{1}{\alpha \nu(t,y)^{\alpha -1} } \left| \frac{ w(t,y) ^{\alpha} - F_M(|y|) ^{\alpha}}{F_M(|y|)} \right| \\
	\nonumber
	&=  \frac{1}{\alpha}\frac{ F_M(|y|) ^{\alpha - 1} }{ \nu(t,y)^{\alpha -1} } \left| \frac{ w(t,y) ^{\alpha} - F_M(|y|) ^{\alpha}}{F_M(|y|)^\alpha } \right| \\
	\nonumber
	&=  \frac{1}{\alpha} \left( \frac { \nu(t,y) } { F_M(|y|) } \right)^{1-\alpha} \left| \frac{ w(t,y) ^{\alpha} - F_M(|y|) ^{\alpha}}{F_M(|y|)^\alpha } \right| \\
	\label{eq:thm asymptotics t non-increasing u_0 comparison u^alpha and u}
	&\le \frac{1}{\alpha} \left( \frac { 3 } { 2 } \right)^{ \frac{ \alpha}{1-\alpha}} \left| \frac{ w(t,y) ^{\alpha} - F_M(|y|) ^{\alpha}}{F_M(|y|)^\alpha } \right|.
	\end{align}	
	Hence, exists $t_3 \ge t_2$ such that
	\begin{equation}
		\label{eq:asymptotics bound away}
		\left| \frac{ w(t,y)  - F_M(|y|)}{F_M(|y|)} \right| \le \ee, \qquad \forall t \ge t_3, |y|  \ge \delta.
	\end{equation}
	
	\noindent \textbf{Step 3. Close to $y = 0$. Other bound.} We write \eqref{eq:asymptotics bound away} as
	\begin{equation*}
		w(t,y) \ge (1 - \varepsilon) F_M(|y|), \qquad \forall t \ge t_3, |y| \ge \delta.
	\end{equation*}
	Therefore, taking some $|y_0| = \delta$ and using thtat $v$ in non-increasing in $y$ we have
	\begin{equation*}
		w(t,y) \ge w(t, y_0) \ge (1 - \varepsilon ) F_M(\delta), \qquad \forall t \ge t_3, |y| \le |y_0|.
	\end{equation*}
	Going back to \eqref{eq:asymp delta from F_M} we have that
	\begin{equation*}
		F_M(\delta) \ge (1-\varepsilon) F_M(0) \ge (1-\ee )^2 F_M(|y|).
	\end{equation*}
	so
	\begin{equation*}
		w(t,y) \ge F_M(|y|) (1 - \varepsilon)^3, \qquad \forall t \ge t_3, |y| \le \delta.
	\end{equation*}
	Therefore
	\begin{equation}
	\label{eq:asymp t lower bound close 0}
		\frac{w(t,y) - F_M(|y|)}{F_M(|y|)} \ge -3 \ee + 3 \ee^2 - \ee^3.
	\end{equation}
	Joining the information from \eqref{eq:asymp t upper bound close 0} and \eqref{eq:asymp t lower bound close 0} we have that
	\begin{equation*}
		\left| \frac{w(t,y) - F_M(|y|)}{F_M(|y|)} \right| \le 4 \ee + 4 \ee^2 + \ee^3, \qquad \forall t \ge t_2, t_3,\, |y| \le \delta.
	\end{equation*}
	Together with \eqref{eq:asymptotics bound away}, this completes the proof.
\end{proof}

\subsubsection{Asymptotic behaviour as $t \to +\infty$ for non-increasing data $u_0$ with an initial gap}
\label{sec:asymp in t non-increasing u_0 with gap}
Going back to what was said in \Cref{sec:data with initial gap}, if we have an initial datum
\begin{equation*}
u_0 (\rho) = \begin{dcases}
0 & \rho \le b , \\
\widetilde u_0(\rho - b) & \rho > b
\end{dcases}
\end{equation*}
where $u_0$ is non-increasing, then a solution of \eqref{eq:main equation} by characteristics is given by
\begin{equation*}
u (t,\rho) = \begin{dcases}
0 & \rho \le b , \\
\widetilde  u(t, \rho - b) & \rho > b
\end{dcases}
\end{equation*}
where $u$ is the solution by characteristics with datum $u_0$. In particular, $ u(t,0) = 0$ and, therefore, $ v (t,0) = 0$. Hence, \eqref{eq:asymp uniform convergence in relative error} cannot hold. Nevertheless, due to \Cref{thm:asymptotics t non-increasing u_0} we have the weaker form
\begin{equation}
\sup_{\omega_d |x|^d \ge    b } \left| \frac{  u(t,x)   -  U_M (t,x)  }{U_M \left ( t,x  \right )}\right|  \longrightarrow 0 ,  \qquad \textrm{as } t \to +\infty.
\end{equation}
In rescaled variable this reads
\begin{equation*}
\sup_{\omega_d |y|^d \ge  bt^{-\frac 1 \alpha} } \left| \frac{  w(t,y)   -  F_M(|y|)  }{ F_M(|y|) }\right|  \longrightarrow 0 ,  \qquad \textrm{as } t \to +\infty.
\end{equation*}
Asymptotically, this covers every $|y| > 0$. In particular, it guarantees that
\begin{equation}
	\sup_{|y|\ge \delta } \left| \frac{  w(t,y)   -  F_M \left (  {|y|}  \right ) }{F_M \left ( {|y|}  \right )}\right|  \longrightarrow 0 ,  \qquad \textrm{as } t \to +\infty, \quad \forall \delta > 0.
\end{equation}
This result is the most that can be expected in general.

\subsubsection{Asymptotic behaviour as $|x| \to + \infty$ for $t > 0$ fixed and general non-increasing data $u_0$}

When can repeat the argument to check that the tails of the self-similar solution are maintained as $|x| \to +\infty$ for any $t > 0$, if $u_0$ is compactly supported
\begin{theorem}
	\label{thm:non-decreasing analysis of the tails}
	Let $u_0 \in L^\infty_c (\mathbb R^d)$ radially non-increasing, $M = \| u_0 \|_{L^1}$, and let $u$ be given by \eqref{eq:solution radial nonincreasing discontinuous data}. Then, for every $t > 0$ fixed
	\begin{equation}
	\frac{u(t,x) - U_M(t,x)}{U_M(t,x)} \longrightarrow 0 , \qquad |x| \to +\infty.
	\end{equation}
\end{theorem}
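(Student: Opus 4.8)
The plan is to reuse verbatim the factorisation of the relative error established in \Cref{lem:relative error with self-similar power alpha}, only now letting $\rho = \omega_d |x|^d \to +\infty$ with $t > 0$ held fixed, rather than sending $t \to +\infty$ as in \Cref{thm:asymptotics t non-increasing u_0}. Writing $(\rho_0, \eta_0) = P_t^{-1}(\rho)$ for the foot of the characteristic through $(t,x)$ and recalling that $\rho - \rho_0 = \alpha m_0(\rho_0)\eta_0^{\alpha-1} t$, formula \eqref{formula} expresses $\frac{u^\alpha - U_M^\alpha}{U_M^\alpha}$ as a product of two brackets, and I would analyse the limit of each separately.

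First I would pin down the behaviour of the foot $(\rho_0, \eta_0)$ as $\rho \to +\infty$. Since $u_0 \in L^\infty_c$ is radially non-increasing, $\supp u_0 = [0,\rho_*]$ with $\rho_* < +\infty$, and \Cref{rem:P_t as rho to infinity} gives $P_t^{-1}(\rho) \to (\rho_*, 0)$; that is, $\rho_0 \to \rho_*$ stays bounded, $\eta_0 \to 0$, and hence $m_0(\rho_0) \to M$. This is exactly where compact support enters: the boundedness of $\rho_0$ forces $\rho/(\rho - \rho_0) \to 1$ and $M/m_0(\rho_0) \to 1$, so the first bracket $1 - \big(\cdots\big)^{\alpha/(1-\alpha)}$ tends to $0$. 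The second bracket is of the form $[1 + \alpha(\cdots)^{-\alpha/(1-\alpha)}]^{-1}$ with a nonnegative argument, hence always bounded in $(0,1]$ (in fact it tends to $1$, since $\eta_0 \to 0$ makes the inner quantity blow up, but only boundedness is needed). Multiplying, I conclude
$$\frac{u(t,x)^\alpha - U_M(t,x)^\alpha}{U_M(t,x)^\alpha} \longrightarrow 0 \qquad \text{as } |x| \to +\infty.$$

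It then remains to pass from $\alpha$-th powers to the functions themselves, exactly as in Step 2 of the proof of \Cref{thm:asymptotics t non-increasing u_0}. Taking $\alpha$-th roots gives $u/U_M \to 1$, so for $|x|$ large the ratio lies in a fixed compact subinterval of $(0,+\infty)$. Applying the mean value theorem, $u^\alpha - U_M^\alpha = \alpha\,\nu^{\alpha-1}(u - U_M)$ with $\nu$ between $u$ and $U_M$, one obtains
$$\frac{u - U_M}{U_M} = \frac{1}{\alpha}\left(\frac{\nu}{U_M}\right)^{1-\alpha}\frac{u^\alpha - U_M^\alpha}{U_M^\alpha},$$
and since $\nu/U_M$ stays bounded, the prefactor is controlled and the left-hand side tends to $0$, which is the claim.

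The argument is short because the substantive analytic work already resides in \Cref{lem:relative error with self-similar power alpha} and \Cref{rem:P_t as rho to infinity}; the one point requiring care — and the real content of the hypothesis — is the boundedness of $\rho_0$ as $\rho \to +\infty$. Were $u_0$ allowed to have a fat tail (noncompact support), $\rho_0$ would itself diverge, the ratio $\rho/(\rho - \rho_0)$ need no longer tend to $1$, and the first bracket could fail to vanish; this is precisely why the statement restricts to $u_0 \in L^\infty_c$.
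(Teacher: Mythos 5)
Your proposal is correct and follows essentially the same route as the paper: it applies the factorisation from \Cref{lem:relative error with self-similar power alpha} together with $P_t^{-1}(\rho)\to(\rho_*,0)$ (\Cref{rem:P_t as rho to infinity}) to kill the first bracket while the second stays bounded, and then passes from $\alpha$-th powers to the functions via the mean value theorem as in \eqref{eq:thm asymptotics t non-increasing u_0 comparison u^alpha and u}. In fact you spell out the limit of the foot $(\rho_0,\eta_0)$ and the behaviour of each bracket more explicitly than the paper's terse proof does.
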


\begin{proof}
	We repeat the same argument as before. First, from \Cref{lem:relative error with self-similar power alpha} and the fact that $\rho_0 = P_t^{-1} (\rho)$ we have that
	\begin{equation}
	\frac{u(t,x)^\alpha - U_M(t,x)^\alpha}{U_M(t,x)^\alpha} \longrightarrow 0 , \qquad |x| \to +\infty.
	\end{equation}
	Therefore, $u / U_M \in (c_1, c_2)$ at least for $|x|$ large. Then, arguing as in \eqref{eq:thm asymptotics t non-increasing u_0 comparison u^alpha and u} it holds that
	\begin{align*}
	\left| \frac{ u(t,x)  - U_M(t,x)}{U_M(t,x)} \right|
	&\le \frac{c_2 ^{ \frac{ \alpha}{1-\alpha}} }{\alpha} \left| \frac{ u(t,x) ^{\alpha} - U_M(t,x) ^{\alpha}}{U_M(t,x)^\alpha } \right| \to 0. \qedhere
	\end{align*}	
\end{proof}

\section{General radial data}
\label{sec.gendata}

As we pointed out in item \ref{it:u0 two triangle} of \Cref{rem:comments on characteristics} if the data $u_0$ is  not monotone, then characteristics can cross instantaneously. This leads to the appearance of a shock using the conservation law theory as we will discuss next.

\subsection{Shocks. The Rankine-Hugoniot equation}\label{sec:existhevanish}
	The choice of the free boundary is not trivial in principle. As usual in conservation laws, let us assume that  the solution is classical at either side of a shock wave $(t, S(t))$, and let us denote these solutions by $u^+$ and $u^-$.
	
	If we consider the mass at either side $S$ and continuity means that
	\begin{equation*}
		m(t,S(t)^+) = m(t,S(t)^-).
	\end{equation*}
Taking derivatives
	\begin{equation*}
		m(t, S(t)^+)_t + m_\rho(t, S(t)^+) S' = m_t(t, S(t)^-) + m_\rho (t, S(t)^-)S'.
	\end{equation*}
	Due to \eqref{eq:mass Burgers rho} and the continuity of $m$
	\begin{equation*}
		-m_\rho(t, S(t)^+)^\alpha m(t, S(t)) + m_\rho(t, S(t)^+) S' = -m_\rho(t, S(t)^-)^\alpha m(t, S(t)) + m_\rho(t, S(t)^-) S'.
	\end{equation*}
	Taking into account that $m_\rho = u$, we deduce that
	\begin{equation*}
		-(u^+)^\alpha m + u^+ S' = -(u^-)^\alpha m + u^-  S'
	\end{equation*}
	and solving for $S'$ we deduce the equation
	\begin{equation}
		\label{eq:RH}
		 S' (t) = m(t, S(t)) \frac{u^+(t,S(t))^\alpha - u^-(t,S(t))^\alpha}{u^+(t,S(t)) - u^-(t, S(t))} .
	\end{equation}
	We can call this the generalised Rankine-Hugoniot condition for \eqref{eq:mass Burgers rho}. We leave to the reader to check that if $u^+$ and $u^-$ are weak local solutions satisfying \eqref{eq:RH} then the solution constructed by pasting them
	\begin{equation*}
		u(t,x) = \begin{dcases}
			u^+(t, x) & x < S(t) , \\
			u^-(t,x) & x > S(t)
		\end{dcases}
	\end{equation*}
	is a weak local solution.

	\begin{remark}
		For solutions jumping at the boundary of the support formula \eqref{eq:RH} simplifies into
		\begin{equation}
			\label{eq:runkine support}
			S' (t) = m(t, S(t)) {u^+(t,S(t))^{\alpha-1}}.
		\end{equation}
	\end{remark}

	\subsection{Example of non-uniqueness of weak solutions: the square functions}\label{sec:spurious}

	Let $u_0 (\rho) = c_0 \chi_{[0,L]}(\rho)$. Then, the mass in volumetric coordinates is
	\begin{equation*}
		m(0,\rho) = \begin{dcases}
			c_0\rho & \rho < L , \\
			c_0 L & \rho \ge L .
		\end{dcases}
	\end{equation*}
	Let us find weak local solutions by characteristics. For $0 < \rho_0 < L$ we have that
	\begin{equation*}
		\rho(t) = \rho_0 + \alpha \rho_0 c_0^{\alpha} t .
	\end{equation*}
	We can therefore invert
	\begin{equation*}
		\rho_0 = \frac{r}{1 + \alpha c_0^{\alpha} t}.
	\end{equation*}
	Assume $0 < \rho/(1+\alpha c_0^{\alpha} t) < L$, then we can follow a characteristic back to $\rho_0 < L$.
	As a consequence we get
	\begin{align*}
		u(t,\rho) =
			c_0 (1 + \alpha c_0^{\alpha} t)^{-\frac 1 \alpha}.
	\end{align*}

	Using the generalised Rankine-Hugoniot condition \eqref{eq:RH},  we select two weak local solutions $\allowbreak u^+(t, \rho) = (c_0^{-\alpha} + \alpha t)^{-\frac 1 \alpha}$ and $u^- (t,\rho) = 0$. 		
		The mass $m(t,r)$ is clearly
		\begin{equation*}
			m(t,\rho) = (c_0^{-\alpha} + \alpha t)^{-\frac 1 \alpha} \Big( \rho \wedge S(t) \Big) .
		\end{equation*}
		Therefore, we substitute in \eqref{eq:runkine support} to deduce that
		\begin{equation*}
			S' (t) = S(t) (c_0^{-\alpha} + \alpha t)^{-\frac 1 \alpha} (c_0^{-\alpha} + \alpha t)^{-\frac {\alpha - 1} \alpha}.
		\end{equation*}
		Hence
		\begin{equation*}
			S' (t) = S(t) (c_0^{-\alpha} + \alpha t)^{-1}.
		\end{equation*}
		By definition the jump must begin at the jump of $u_0$, hence $S(0) = L$. We integrate the separable equation to deduce that
		\begin{equation*}
			S(t) = L (c_0^{-\alpha} + \alpha t)^{-\frac 1 \alpha}.
		\end{equation*}
		Therefore, the solutions obtained by elementary mass conservation arguments are precisely the weak solutions.
		We have therefore constructed the following function
		\begin{equation*}
			u(t,\rho) =
			\begin{dcases}
				c_0 (1 + \alpha c_0^{\alpha} t)^{-\frac 1 \alpha} & \rho < L (c_0^{-\alpha} + \alpha t)^{-\frac 1 \alpha}\\
				0 & \rho > L (c_0^{-\alpha} + \alpha t)^{-\frac 1 \alpha}.
			\end{dcases}
		\end{equation*}
		It is an additional weak local solution to the equation \eqref{eq:main equation} with initial data $u_0 = c_0 \chi_{[0,L]}$. Notice that we already constructed a rarefaction fan solution in \Cref{sec:rarefaction fan u_0 square}.
		We leave to the reader to check that this solution does not satisfy the Lax-Oleinik condition of incoming characteristics.		

\subsection{Initial data with two bumps}

We now consider the following initial data
\begin{equation*}
	u_0 = c_1 \chi_{[0,1]} + c_2 \chi_{[a,b]},
\end{equation*}
then physically meaningful solutions must develop a free boundary $S(t)$.
The solution can be computed classically on the left ($u_1(t,x)$) and right ($u_2(t,x)$) of $S(t)$. Presumably $S(0) = a$.
Since the characteristics emanating from the second bump have increasing slope, the Lax-Oleinik condition guarantees that $S$ is
increasing, so that they are incoming characteristics.
An ODE for $S(t)$ can be written via
the Rankine-Hugoniot equation \eqref{eq:RH}.

As $u_1$ we can take the solution by characteristics constructed in \Cref{sec:rarefaction fan u_0 square}, i.e.
\begin{equation*}
	u_1(t, \rho) =
	\begin{dcases}
	(c_1^{-\alpha} + \alpha t)^{-\frac 1 \alpha} & \rho \le 1 + \alpha c_1^{\alpha} t, \\
	\left( \left(\frac{\rho - 1}{\alpha c_1 t}\right)^{\frac \alpha {1-\alpha}} + \alpha t    \right)^{-1/\alpha} & \rho > 1 + \alpha c_1^{\alpha} t.
	\end{dcases}
\end{equation*}
For $u_2$ we have to do some additional computations. We will construct a solution by characteristics which is, as we have seen, a weak local solution. We look at the equation for the characteristics coming from $\rho_0 \in [a,b)$
\begin{equation*}
	\rho = \rho_0 + \alpha u_{2,0} (\rho_0)^{\alpha-1} m_{0,2} (\rho_0) t = \rho_0 + \alpha c_2^{\alpha-1} (c_1 + c_2 (\rho_0 - a)) t
\end{equation*}
where $m_{0,2}(a) = c_1$ is the mass accumulated from the first bump.
These characteristics correspond to the flat zone $u_2(t,\rho) = (c_2^{-\alpha} + \alpha t)^{-\frac 1 \alpha}$. On the other hand,
the characteristics of the rarefaction fan tail come from $\rho_0 = b$ and have as equations
\begin{equation*}
	\rho =  b + \alpha \eta_{0,2}^{\alpha-1} (c_1 + c_2 (b - a)) t
\end{equation*}
where each characteristic is determined by a unique value $\eta_{0,2} \in [0,c_2]$ and over this characteristic we have $u_2(t,\rho) = (\eta_{0,2}^{-\alpha} + \alpha t)^{-\frac 1 \alpha}$. Solving for $\eta_{0,2}$ we have
\begin{equation*}
	\eta_{0,2} = \left( \frac{\rho - b}{\alpha (c_1 + c_2 (b - a)) t } \right)^{\frac{1}{\alpha - 1}}.
\end{equation*}
Therefore, we can construct the solution right of the shock as
\begin{equation*}
	u_2(t,\rho) =
		\begin{dcases}
				\left( c_2^{- \alpha } + \alpha t    \right)^{-1/\alpha} & \rho \le b + \alpha (c_1 + c_2(b-a)) c_2^{\alpha-1} t\\
				\left( \left(\frac{\rho - b}{\alpha (c_1 + c_2 (b - a)) t }\right)^{\frac \alpha {1-\alpha}} + \alpha t    \right)^{-1/\alpha} & \rho > b + \alpha (c_1 + c_2(b-a)) c_2^{\alpha-1} t .
		\end{dcases}
\end{equation*}
We can compute the mass at the shock as that coming from the left. The shock will move faster than the characteristic coming from 1, and  Going back to \eqref{eq:mass characteristics}
\begin{align*}
	m(t, S(t)) &= m_1 (t, S(t)))
	= m_{0,1} (\rho_0) (1 + \alpha u_0( \rho_0 )^\alpha t)^{1- \frac 1 \alpha }\\
	&= \begin{dcases}
	c_1 S(t) (1 + \alpha c_1^\alpha t) ^{1-\frac 1 \alpha} &  S(t) \le 1 + \alpha c_1^{\alpha} t \\
	c_1 \left(1 + \alpha \left(  \frac{S(t) - 1}{{\alpha} c_1 t}  \right)^ {\frac \alpha {\alpha - 1}}  t\right)^{1-1/\alpha }, &  S(t) > 1 + \alpha c_1^{\alpha} t
	\end{dcases}
\end{align*}
and we end up with a piecewise defined ODE
\begin{equation*}
	S'(t) = m(t, S(t)) \frac{ u_1 (t, S(t))^\alpha -  u_2(t, S(t))^\alpha }{u_1(t, S(t)) - u_2(t, S(t))}.
\end{equation*}
The right hand side of the ODE for $S(t)$ is continuous and locally Lipschitz, so a unique solution exists. Solving numerically this ODE we obtain the characteristics given in \Cref{fig:two-bumps-characteristics-RK}.
\begin{figure}[H]
	\centering
	\includegraphics[width=.7\textwidth]{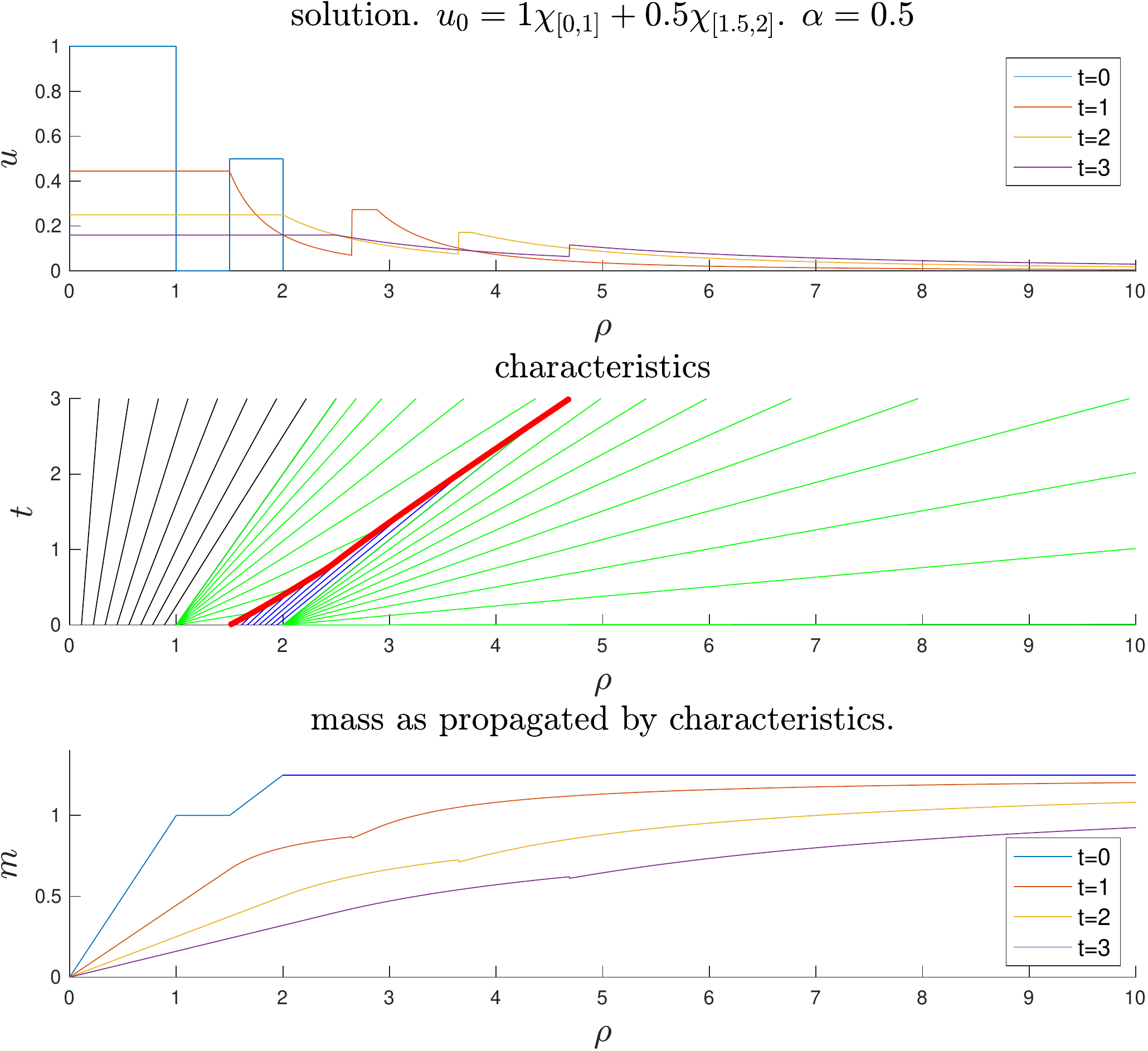}
	\caption{Solution using explicit solutions on either side of the shock, and a Runge-Kutta scheme to solve the Rankine-Hugoniot condition. We show only a few characteristics. The characteristics in black and blue correspond to the flat part of the solution, characteristics in green to the rarefaction fan tail, and the read line represents the shocks.}
	\label{fig:two-bumps-characteristics-RK}
\end{figure}

\section{Existence theory: vanishing viscosity}
\label{sec.exist}

Up to now we have constructed solutions by the method of characteristics. The problem is then to show that for this class of solutions the initial value problem is well-posed. As it is frequently done, we proceed further by constructing  first a well-posed theory for the approximate regularized problem \eqref{eq:PDE regularised}, and then we pass to the limit to construct solutions of \eqref{eq:main equation} that coincide with our previous constructions. Finally, we prove uniqueness of the limit by yet another theory.

\subsection{Classical solutions of the viscous problem \texorpdfstring{\eqref{eq:PDE regularised}}{(P eps)}}

We construct classical solution via fixed point of the heat equation

\begin{theorem}
	\label{thm:vanishing viscosity well-posedness}
	Let $u_0 \in L^1(\mathbb R^d) \cap \mathcal C_b(\mathbb R^d)$. Then, there exists a classical solution of \eqref{eq:PDE regularised}.
\end{theorem}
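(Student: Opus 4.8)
The plan is to construct a classical solution of \eqref{eq:PDE regularised} by a fixed-point argument built on the heat semigroup, exploiting the fact that once the coupling through the Newtonian potential is frozen, the first equation becomes a quasilinear parabolic equation whose principal part is the linear heat operator $\ee\Delta$. First I would set up the problem on a short time interval $[0,T]$ and define a map $\Phi$ on a suitable closed ball of $X = \mathcal C([0,T]; L^1(\Rd)\cap \mathcal C_b(\Rd))$ as follows: given a candidate $w$, solve $-\Delta v = w$ to obtain $v = \N(w)$ via the Newtonian potential, and then solve the \emph{linear} (in $u$) equation
\begin{equation*}
	u_t = \ee \Delta u + \diver\big( (\ee + w_+)^\alpha \nabla v \big)
\end{equation*}
by Duhamel's formula against the heat kernel $G_\ee(t,\cdot)$ of $\ee\Delta$, setting $\Phi(w) = u$. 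The output $u$ is the fixed point we seek, since a fixed point $u = \Phi(u)$ solves \eqref{eq:PDE regularised}. The key is that the nonlinearity $(\ee + w_+)^\alpha$ is globally Lipschitz in $w$ (because $\ee > 0$ keeps us away from the singular point where $s\mapsto s^\alpha$ has infinite slope, and $x\mapsto x_+$ is $1$-Lipschitz), so the troublesome sublinear mobility is tamed precisely by the $\ee$-regularisation.

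The key steps in order would be: (i) establish mapping estimates showing $\Phi$ maps a ball $B_R \subset X$ into itself for $T$ small, using the smoothing $\|\nabla G_\ee(t)\ast f\|_{L^1} \lesssim (\ee t)^{-1/2}\|f\|_{L^1}$ together with elliptic regularity for $v = \N(w)$ giving control of $\nabla v$; the $t^{-1/2}$ singularity is integrable, which is what makes the Duhamel iteration close; (ii) prove $\Phi$ is a contraction on $B_R$ for $T$ small by subtracting two Duhamel expressions, using the Lipschitz bound on the mobility and the linearity of $w\mapsto \N(w)$ to control the difference $\nabla v_1 - \nabla v_2$ in terms of $w_1 - w_2$; (iii) invoke the Banach fixed-point theorem to obtain a unique local mild solution; (iv) bootstrap regularity, using parabolic Schauder or $L^p$ theory to upgrade the mild solution to a classical solution, since the coefficients are now as smooth as the solution permits; and (v) extend to a global-in-time solution by means of the a priori $L^\infty$ bound coming from the Friendly Giant supersolution $\overline u = (\|u_0\|_{L^\infty}^{-\alpha} + \alpha t)^{-1/\alpha}$ constructed in \Cref{sec:explicit solutions}, which prevents blow-up and lets the local time $T$ be iterated forward without shrinking to zero.

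The main obstacle I expect is step (iv), the regularity bootstrap, and more subtly the control of $\nabla v$ in the right norms throughout. The issue is that $\N$ is a nonlocal operator of order $-2$ on the whole space $\Rd$, so $\nabla v = \nabla \N(w)$ is of order $-1$ relative to $w$; one must verify that when $w \in L^1 \cap \mathcal C_b$ the gradient $\nabla v$ is genuinely bounded and has enough Hölder continuity to feed the Schauder theory, which is delicate in low dimension (e.g. $d=1,2$) where the Newtonian kernel and its decay behave atypically. A clean way around this is to carry out the fixed point in a space with a little extra regularity, say $\mathcal C([0,T]; L^1 \cap \mathcal C^{0,\theta}_b)$, so that the parabolic smoothing of the heat semigroup produces Hölder gradients of $u$, and then these are used both to close the iteration and to justify the classical interpretation of all derivatives. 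I would also need to confirm that the $L^1$ mass and the $\mathcal C_b$ bound are propagated — the $L^1$ bound from integrating the (conservative) divergence-form equation, and the sup bound from the comparison with $\overline u$ — so that the global continuation in step (v) does not require any hypothesis beyond $u_0 \in L^1 \cap \mathcal C_b$.
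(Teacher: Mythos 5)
Your overall strategy --- Duhamel's formula for the heat semigroup, a Banach fixed point, bootstrap to classical regularity, then global continuation --- is exactly the paper's. However, step (i) as written has a genuine gap in the $L^1$ part of the estimate. You integrate by parts in the Duhamel integral and invoke $\|\nabla G_\ee(t)\ast f\|_{L^1}\lesssim (\ee t)^{-1/2}\|f\|_{L^1}$ with $f=(\ee+w_+)^\alpha\nabla \N(w)$, but this flux need not belong to $L^1(\Rd)$: where $w$ vanishes or decays, the regularised mobility does not vanish but tends to the constant $\ee^\alpha>0$, while $\nabla \N(w)$ decays only like $|x|^{1-d}$ whenever $\int w\neq 0$, so $\int_{|x|>R}|f|\,\diff x\gtrsim \ee^\alpha\int_{|x|>R}|x|^{1-d}\diff x=\infty$ in every dimension. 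The same obstruction reappears in your contraction step (ii), where the term $(\ee+w_{2,+})^\alpha\,\nabla \N(w_1-w_2)$ is likewise non-integrable even though $w_1-w_2\in L^1$. Ironically, the culprit is the very $\ee$-regularisation you rely on to make the mobility Lipschitz: it also turns the mobility into a non-decaying coefficient, and since you need the $L^1$ component of the space to make $\nabla\N(w)$ bounded at all, you cannot simply drop it from the iteration.

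The paper's proof is arranged precisely to dodge this: it expands the divergence \emph{before} applying Duhamel, writing $G(u)=\nabla\big((\ee+u_+)^\alpha\big)\cdot\nabla \N(u)-(\ee+u_+)^\alpha u$, so that the dangerous constant part of the mobility ends up multiplying $\Delta \N(u)=-u\in L^1$ rather than $\nabla \N(u)$; the price is that the iteration space must carry one derivative of $u$ (the paper takes $G$ Lipschitz from $H^1\cap\mathcal C^1$ into $L^1\cap\mathcal C_b$, letting the semigroup's smoothing supply that derivative). If you prefer to keep your derivative-free space $\mathcal C([0,T];L^1\cap\mathcal C_b)$, the repair is to split the flux as $\big[(\ee+w_+)^\alpha-\ee^\alpha\big]\nabla \N(w)+\ee^\alpha\nabla \N(w)$: the bracket is bounded by $\alpha\ee^{\alpha-1}w_+\in L^1$ (and $\nabla\N(w)\in L^\infty$), so your $\nabla G_\ee$ estimate applies to it, while for the second piece you keep the divergence inside, $\diver\big(\ee^\alpha\nabla \N(w)\big)=-\ee^\alpha w\in L^1$, so no kernel derivative is needed there. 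With that modification your steps (ii)--(v) go through; in particular your globalisation via the Friendly-Giant $L^\infty$ bound and $L^1$ control (cf.\ \Cref{prop:PDE regularised estimates}) is a legitimate, and arguably more robust, alternative to the paper's continuation argument, which instead asserts a uniform Lipschitz constant for $G$ so that the local existence time can be iterated.
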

\begin{proof}
	We define the map
\begin{equation*}
	G: u \mapsto  \nabla \cdot ((\ee + u_+)^\alpha \nabla \N(u)).
\end{equation*}
One can write the equation as
$
	u_t - \ee \Delta u = G(u)
$.
Hence, it is natural to look for solutions as fixed points of Duhamel's formula
\begin{equation}
	\label{eq:Duhamel}
	u(t) = S_{\ee}(t) u_0 + \int_0^t S_{\ee} (t-s) G (u (s))  \diff s,
\end{equation}
where $S_\ee$ is the semigroup for $-\ee \Delta$. We recall the regularisation properties of the Newtonian potential
	$\N: \mathcal C_b(\mathbb R^d) \cap L^1 (\mathbb R^d) \cap L^2 (\mathbb R^d) \longrightarrow  \mathcal C^2_b(\mathbb R^d) \cap L^1 (\mathbb R^d) \cap  H^1 (\mathbb R^d) $.
By writing
\begin{equation*}
	G(u) = (\ee + u_+)^\alpha \nabla u \cdot \nabla N(u) - (\ee + u_+)^\alpha u,
\end{equation*}
we deduce that $
	G: H^1 (\mathbb R^d) \cap \mathcal C^1 (\mathbb R^d)  \longrightarrow  L^1(\mathbb R^d) \cap \mathcal C_b (\mathbb R^d).
$
Furthermore, it a Lipschitz operator. The operator from \eqref{eq:Duhamel} given by
\begin{equation*}
	K_t (u) = S_\ee(t) u_0 + \int_0^t S_\ee(t-s) G(u(s)) \diff s.
\end{equation*}
Due to the standard decay properties of the heat semigroup we have that
\begin{eqnarray*}
	K_t : X \longrightarrow  X, \qquad  \text{where } X = L^2 (0,T; L^1 (\mathbb R^d))\cap \mathcal C_b(\mathbb R^d)
\end{eqnarray*}
is Lipschitz with a constant depending on $t$. For $t$ small enough, the operator is contractive and we can use Banach's fixed point theorem to show there exists a unique solution of \eqref{eq:Duhamel}. Since $G$ is Lipschitz, this constant can be taken uniformly, and hence the solution is global. By a simple bootstrap argument, we show that the solution is classical.
\end{proof}

\begin{proposition}
	\label{prop:PDE regularised estimates}
	Let $u_0 \in L^1 (\mathbb R^d) \cap \mathcal C_b (\mathbb R^d)$. Then, any classical solutions of \eqref{eq:PDE regularised} satisfies
	\begin{equation}
		\| u(t,\cdot)_{+} \|_{L^p} \le \| (u_0)_{+} \|_{L^p} .
	\end{equation}
	The same holds for $u_-$, so if $u_0 \ge 0$, then $u \ge 0$.
\end{proposition}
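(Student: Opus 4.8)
The plan is to run the standard $L^p$ energy method, testing the equation against the signed powers $p(u_+)^{p-1}$ and $-p(u_-)^{p-1}$ and showing the resulting time derivatives are non-positive. Fix $p\in(1,\infty)$ and work first at the formal level, assuming $u$ is smooth and decays fast enough at infinity that every integration by parts produces no boundary term; this is legitimate because of the regularity and the $L^1\cap L^2$ decay furnished by the Duhamel construction in \Cref{thm:vanishing viscosity well-posedness}.

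For the bound on $u_+$, note $(u_+)^p$ is $C^1$ in $u$ with derivative $p(u_+)^{p-1}$, so multiplying \eqref{eq:PDE regularised} by $p(u_+)^{p-1}$ and integrating gives
\[
\frac{d}{dt}\int_{\Rd}(u_+)^p = \int_{\Rd} p(u_+)^{p-1}\diver\big((\ee+u_+)^\alpha\nabla v\big) + \ee\int_{\Rd} p(u_+)^{p-1}\Delta u .
\]
The viscous term, after one integration by parts, equals $-\ee p(p-1)\int_{\Rd}(u_+)^{p-2}|\nabla u_+|^2\le 0$. For the mobility term I would introduce the non-negative primitive $H_p(s)=p(p-1)\int_0^s(\ee+\sigma)^\alpha\sigma^{p-2}\diff\sigma$, which is finite precisely because $p>1$, so that $\nabla[H_p(u_+)]=p(p-1)(\ee+u_+)^\alpha(u_+)^{p-2}\nabla u_+$. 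Integrating by parts and using $\Delta v=-u$,
\[
\int_{\Rd} p(u_+)^{p-1}\diver\big((\ee+u_+)^\alpha\nabla v\big) = -\int_{\Rd}\nabla[H_p(u_+)]\cdot\nabla v = -\int_{\Rd} H_p(u_+)\,u \le 0,
\]
since $H_p\ge 0$ and the integrand $H_p(u_+)\,u=H_p(u_+)\,u_+\ge0$ vanishes where $u\le 0$. Hence $\frac{d}{dt}\|u_+(t)\|_{L^p}^p\le 0$, the claimed monotonicity. The structural point is that the coupling sign in $-\Delta v=u$ makes the self-interaction dissipative for the positive part.

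For $u_-$ the decisive observation is that on $\{u<0\}$ one has $u_+=0$, so the mobility collapses to the constant $\ee^\alpha$ and the equation is there a linear advection–diffusion. Testing against $-p(u_-)^{p-1}$ and using $\partial_t u_-=-\chi_{\{u<0\}}u_t$ yields $\frac{d}{dt}\int(u_-)^p=-\int p(u_-)^{p-1}u_t$; the viscous contribution is again $-\ee p(p-1)\int(u_-)^{p-2}|\nabla u_-|^2\le0$. Since $\nabla[(u_-)^{p-1}]$ is supported in $\{u<0\}$, where $(\ee+u_+)^\alpha=\ee^\alpha$, the mobility term becomes
\[
-\int_{\Rd}p(u_-)^{p-1}\diver\big((\ee+u_+)^\alpha\nabla v\big)=\ee^\alpha\int_{\Rd}\nabla[p(u_-)^{p-1}]\cdot\nabla v=\ee^\alpha\int_{\Rd}p(u_-)^{p-1}u=-\ee^\alpha p\int_{\Rd}(u_-)^p\le0,
\]
using $\Delta v=-u$ and $u=-u_-$ on the support. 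Thus $\frac{d}{dt}\|u_-(t)\|_{L^p}^p\le 0$ as well, and when $u_0\ge0$ we have $\|(u_0)_-\|_{L^p}=0$, forcing $u_-\equiv0$, i.e.\ $u\ge0$.

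I expect the main obstacle to be the rigorous justification of these formal steps rather than the algebra: for $1<p<2$ the weight $(u_+)^{p-2}$ is singular at the free boundary $\{u=0\}$, so $p(u_+)^{p-1}$ is admissible but not smooth, and one must also justify differentiating $\int(u_+)^p$ in time and discarding boundary terms at infinity. The clean remedy is to replace the energies by the regularised quantities $(\delta+u_+^2)^{p/2}$ (equivalently, to smooth $s\mapsto s_+^{p-1}$), carry out the identical computation—whose mobility term retains the same sign structure—and let $\delta\to0$, the $L^1\cap L^2$ decay of $u$ killing the boundary terms. The endpoints follow by passing to the limit: $p=\infty$ from $\|u_+(t)\|_{L^\infty}=\lim_{p\to\infty}\|u_+(t)\|_{L^p}$, and $p=1$ by letting $p\to1^+$.
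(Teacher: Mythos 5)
Your proof is correct and follows essentially the same route as the paper's: testing with signed powers, introducing the primitive of $s\mapsto(\ee+s)^\alpha s^{p-2}$ (your $H_p$, the paper's $G$ with $G(0)=0$), using $-\Delta v=u$ to extract the dissipative sign of the mobility term, and obtaining the endpoints $p=1,\infty$ by passing to the limit. Your additional care regarding the singular weight when $1<p<2$ and the observation that the mobility collapses to $\ee^\alpha$ on the support of $u_-$ are more explicit renderings of steps the paper treats implicitly, not a different method.
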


\begin{proof}
	We study the positive and negative part separately. Studying the negative part of $u$ is very simple. For $1 < p < +\infty$, multiplying by $- (u_-)^{p-1}$ and integrating
	\begin{equation*}
		\int_{\mathbb R^d} u_t u_-^{p-1} - \int_{\mathbb R^d} (\ee + u_+)^\alpha \nabla N(u) \nabla u_-^{p-1} + \ee \int_{\mathbb R^d} |\nabla u_-|^2 = 0.	
	\end{equation*}
	Taking into account that $u$ is classical solution, that $(\ee + u_+)^\alpha \nabla u_- = \ee^\alpha \nabla u_-$ and the equation satisfied by $\N(u)$ we have that
	\begin{align*}
		\int_{\mathbb R^d} u_t u_-^{p-1} &=\frac{1}{p}\frac{\diff }{\diff t} \int_{\mathbb R^d} |u_-|^p \\
		- \int_{\mathbb R^d} (\ee + u_+)^\alpha \nabla N(u) \nabla u_-^{p-1} & = -\ee^\alpha \int_{\mathbb R^d} \nabla u_- \nabla \N(u) \\
		&= -\ee^\alpha \int_{\mathbb R^d} u u_- \ge  0.
	\end{align*}
	Hence, we can write
	\begin{equation*}
		\frac{\diff }{\diff t} \int_{\mathbb R^d} |u_-|^p \le  0.
	\end{equation*}
 	From this, we deduce that $\|u_-\|_{L^p}  \le \| (u_0)_-\|_{L^p}$.
	
	For $ 1 < p < +\infty$, we repeat a similar argument for $u_+$, multiplying by $u_+^{p-1}$ and integrating
	\begin{equation*}
		\frac{1}{p}\frac{\diff }{\diff t} \int_{\mathbb R^d} u_+^{p} + \int_{\mathbb R^d} (\ee+u_+)^\alpha \nabla N(u) \nabla u_+^{p-1} + \ee \int_{\mathbb R^d} |\nabla u_+|^2  = 0.
	\end{equation*}
	We have that $(\ee+u_+)^\alpha \nabla u_+^{p-1} = (p-1)(\ee+u_+)^\alpha u_+^{p-2} \nabla u_+ = g(u_+) \nabla u_+ = \nabla G(u_+)$ where $G$ is a primitive of $g$ such that $G(0) = 0$. We can write	
	Taking that $\N(u)$ is the solution $-\Delta \N(u) = u$ we have that
	\begin{equation*}
		 \int_{\mathbb R^d}  \nabla N(u) \nabla G(u_+) = \int_{\mathbb R^d} u G(u_+) = \int_{\mathbb R^d} u_+ G(u_+) \ge 0.
	\end{equation*}
	Finally
	\begin{equation*}
		\frac{1}{p}\frac{\diff }{\diff t} \int_{\mathbb R^d} u_+^{p} \le  0.	
	\end{equation*}
	For $p = 1, +\infty$, the estimates hold by passing to the limit.
\end{proof}

\subsection{An equation for the mass of \texorpdfstring{\eqref{eq:PDE regularised}}{(P eps)}}

Let us now compute
\begin{equation*}
	m_\ee(t,r) = \int_{B_r} u_\ee (t,x) \diff x.
\end{equation*}
As above, we write the equation in radial coordinates as
\begin{equation*}
	\frac{\partial u_\ee}{\partial t} =  r^{-(d-1)} \frac{\partial }{\partial r} \left(  r^{d-1} ( (u_\ee)_+ + \ee )^\alpha  \frac{\partial v}{\partial r}\right)
	+ \ee r^{-(d-1)} \frac{\partial }{\partial r} \left( r^{d-1}  \frac{\partial u_\ee }{\partial r}\right) .
\end{equation*}
Integrating over $B_r$ we have that
\begin{equation*}
	\frac{\partial m_\ee}{\partial t} =  d \omega_d  r^{d-1} ( (u_\ee)_+ + \ee )^\alpha  \frac{\partial v}{\partial r} + \ee d \omega_d  r^{d-1}  \frac{\partial u_\ee }{\partial r}.
\end{equation*}
Again $ - d \omega_d r^{d-1} \frac{\partial v}{\partial r} = m_\ee$ and
\begin{equation*}
	u_\ee =  \frac{1}{d \omega_d r^{d-1}} \frac{\partial m_\ee}{\partial r}
\end{equation*}
so we have that
\begin{equation*}
		\frac{\partial m_\ee}{\partial t} =   - \left(  \left( \frac{1}{d \omega_d r^{d-1}} \frac{\partial m_\ee}{\partial r} \right )_+ + \ee \right)^\alpha  m_\ee
		+ \ee d \omega_d r^{d-1}  \frac{\partial }{\partial r} \left(  \frac{1}{ d \omega_d r^{d-1} } \frac{\partial m_\ee}{\partial r} \right).
\end{equation*}
Considering the change in variable $\rho = \omega_d r^d$ we have that $\frac{\partial }{\partial r} = {d \omega_d r^{d-1}} \frac{\partial }{\partial \rho}$ and hence
\begin{equation*}
		\frac{\partial m_\ee}{\partial t} =   - \left(  \left( \frac{\partial m_\ee}{\partial \rho} \right )_+ + \ee \right)^\alpha  m_\ee
	+  \ee (d \omega_d r^{d-1})^{2}\frac{\partial }{\partial \rho} \left(   \frac{\partial m_\ee}{\partial \rho} \right).
\end{equation*}
Replacing the last $r$ by $\rho$
\begin{equation}
	\label{eq:PDE regularised mass}
		\frac{\partial m_\ee}{\partial t} = - \left(  \left( \frac{\partial m_\ee}{\partial \rho} \right )_+ + \ee \right)^\alpha  m_\ee
+  \ee (d \omega_d^{\frac 1 d} \rho^{\frac{d-1} d})^{2} \frac{\partial^2 m_\ee }{\partial \rho^2}.
\end{equation}
Therefore, the equation contains a term with degenerate viscosity for $d\geq 2$.

\subsection{Weak solutions of \texorpdfstring{\eqref{eq:main equation}}{(P)} via vanishing viscosity}

We can show existence of a  weak solution of \eqref{eq:main equation} by letting $\varepsilon \to 0$.

\begin{theorem}
	\label{thm:vanishing viscosity ee to 0}
	Let $u^{(n)}$ be a sequence of solutions of \eqref{eq:PDE regularised} with $\ee = \frac 1 n$. Then, there is a sub-sequence converging weakly in $L^1_{loc} \cap L^\infty_+$, and the limit is in $L^1 \cap L^\infty_+$.
\end{theorem}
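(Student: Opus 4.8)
The plan is to combine the uniform $L^p$ bounds from \Cref{prop:PDE regularised estimates} with weak compactness to obtain the stated convergence, and then to upgrade to enough strong convergence to pass to the limit in the nonlinear flux, thereby identifying the limit as a weak solution of \eqref{eq:main equation}.

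First, the a priori estimates. Since $u_0\ge 0$, \Cref{prop:PDE regularised estimates} gives $u^{(n)}\ge 0$ and $\| u^{(n)}(t,\cdot)\|_{L^p(\Rd)}\le \|u_0\|_{L^p(\Rd)}$ for every $p\in[1,\infty]$, uniformly in $t$ and $n$; in particular $\{u^{(n)}\}$ is bounded in $L^\infty((0,\infty);L^1(\Rd)\cap L^\infty(\Rd))$. On each slab $(0,T)\times B_R$ this is a bounded sequence in the reflexive space $L^2$, so by Banach--Alaoglu and a diagonal argument over an exhaustion $B_R\uparrow \Rd$, $T\to\infty$, I extract a single subsequence with $u^{(n)}\rightharpoonup u$ weakly in $L^1_{\mathrm{loc}}$ and weakly-$\star$ in $L^\infty$. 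Testing against nonnegative functions preserves $u\ge 0$, and weak lower semicontinuity of the norms (Fatou) gives $u\in L^\infty((0,\infty);L^1(\Rd)\cap L^\infty(\Rd))$. This already yields the stated conclusion.

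It remains to show that $u$ is a weak solution of \eqref{eq:main equation}, i.e. to pass to the limit in the weak formulation of \eqref{eq:PDE regularised}. The terms carrying the time derivative and the initial datum pass to the limit by the weak convergence of $u^{(n)}$, and the viscous contribution $\ee_n\iint u^{(n)}\,\Delta\phi$ vanishes because $u^{(n)}$ is uniformly bounded and $\ee_n=1/n\to 0$. The only delicate term is the flux $\iint (\ee_n+u^{(n)})^\alpha\,\nabla v^{(n)}\cdot\nabla\phi$, where $-\Delta v^{(n)}=u^{(n)}$, i.e. $v^{(n)}=\N(u^{(n)})$.

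For the flux I would first obtain strong convergence of the drift: since $u^{(n)}$ is bounded in $L^1\cap L^\infty$, elliptic regularity for $-\Delta v^{(n)}=u^{(n)}$ bounds $v^{(n)}$ in $W^{2,p}_{\mathrm{loc}}$ for every $p<\infty$ and $\nabla v^{(n)}$ in $\cC^{0,\beta}_{\mathrm{loc}}$ for every $\beta<1$; together with the time-equicontinuity inherited from the equation (the flux being bounded, $\partial_t u^{(n)}$ is controlled in a negative-order space), Rellich--Kondrachov and Arzel\`a--Ascoli give $\nabla v^{(n)}\to\nabla v$ strongly in $L^p_{\mathrm{loc}}$ of space--time. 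The genuine obstacle is strong (equivalently a.e.) convergence of $u^{(n)}$ itself, which is needed because $s\mapsto s^\alpha$ is concave: weak convergence alone only yields a limit $\chi\le u^\alpha$ for the weak limit of $(u^{(n)})^\alpha$, too weak to identify \eqref{eq:main equation}. I would establish a.e. convergence by a Riesz--Fr\'echet--Kolmogorov / Aubin--Lions--Simon compactness argument, controlling time translations through the equation and spatial oscillations through the nonlocal coupling; the hard part is precisely that the only spatial regularity estimate, the viscous bound $\ee_n\iint|\nabla u^{(n)}|^2\le C$, degenerates as $n\to\infty$, so uniform spatial compactness cannot be read off directly and one must instead exploit the compactifying map $u^{(n)}\mapsto \nabla v^{(n)}$ (or, alternatively, an energy-dissipation/compensated-compactness argument that upgrades $\chi\le u^\alpha$ to equality). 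Once $u^{(n)}\to u$ a.e. and $\nabla v^{(n)}\to\nabla v$ strongly, $(\ee_n+u^{(n)})^\alpha\to u^\alpha$ boundedly and a.e., the flux passes to the limit, and $u$ is the desired weak solution of \eqref{eq:main equation}.
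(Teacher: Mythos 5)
Your first two paragraphs are correct and follow essentially the same route as the paper: the uniform $L^p$ bounds of \Cref{prop:PDE regularised estimates} give weak-$\star$ compactness in $L^\infty$, hence weak convergence in $L^p_{loc}$ along a subsequence, and lower semicontinuity of the norms places the nonnegative limit in $L^1\cap L^\infty_+$. That is all the theorem asserts, and it is essentially all the paper's proof establishes.

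Your third paragraph goes beyond the statement, and your diagnosis there is accurate and worth comparing with the paper. The paper's proof ends by invoking compactness of the singular integral operator to get strong convergence of $\nabla\N(u_\ee)$ in $L^2_{loc}$ — exactly the drift compactness you derive via Calder\'on--Zygmund estimates plus an Aubin--Lions argument. But neither you nor the paper passes to the limit in the concave flux $(u^{(n)})^\alpha\nabla v^{(n)}$: weak convergence of $u^{(n)}$ together with strong convergence of $\nabla v^{(n)}$ would identify the limit of a product $u^{(n)}\nabla v^{(n)}$ (the linear-mobility case), but for $0<\alpha<1$ one needs a.e.\ convergence of $u^{(n)}$ itself, and the only spatial estimate, $\ee\iint|\nabla u^{(n)}|^2\le C$, degenerates as $\ee\to 0$, just as you say. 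The paper does not resolve this obstruction here either; instead, the identification and uniqueness of the limit is carried out only for radial data, through the mass equation and the theory of viscosity solutions in \Cref{sec.viscos} (see \Cref{thm:converges mass PDE regularised}). So nothing in your proposal is wrong; the gap you honestly flag is a gap the paper itself leaves open at this point and circumvents by a different (radial, mass-based) route.
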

\begin{proof}
	Since $\| u_\ee  (t) \|_{L^\infty ( \Rd)} \le \|u_0 \|_{L^\infty  ( \Rd)}$, there exists $u \in L^\infty  (\Rd)$ such that, up to a subsequence, $u_\ee \rightharpoonup u$ weak-$\star$ in $L^\infty(Q_T)$. Hence $u_n \rightharpoonup u$ weakly in $L^p_{loc} (\Rd)$. Furthermore, by the lower semicontinuity of the norm, we have that $u \in L^1 \cap L^\infty (\Rd)$. We finally use the compactness properties of the singular potentials, we deduce the local strong convergence of $\nabla\N(u_\ee)$ in $L^2_{loc} (\Rd)$.
\end{proof}

Instead of trying to prove uniqueness of weak solutions under additional conditions, we integrate and consider the mass function for radial initial data. In the next section, we show uniqueness of solutions in the sense of viscosity solutions for the mass variable.

\section{A theory of viscosity solutions for the mass equation}
\label{sec.viscos}

	As shown above, weak solutions are not in general unique. This is a common problem of conservation laws. In some cases, this difficulty is overcome by introducing the notion of entropy solutions (see, e.g. \cite{Kruzkov1970,Carrillo1999,Andreianov2000}). Such solutions are stable under passage to the limit and regularisation. They are understood as the ``physically meaningful solutions''. This notion works well for scalar laws, but authors have failed to extend it to systems, as is our case.
	
	In one dimension, the primitive of entropy solutions of conservation laws (or of radial solution) is a solution of a Hamilton-Jacobi equation. The corresponding notion with uniqueness is that on \emph{viscosity solutions} introduced by Crandall and Lions in \cite{Crandall1983} (a nice explanation on the link of entropy and viscosity solutions can be found \cite{Endal2018}). The nice properties are well-understood in our times (see \cite{crandall+evans+lions2006,crandall+ishii+lions1992users-guide-viscosity,Crandall1983}, for a nice introduction to this theory we point the reader to \cite{Tran2019}). Furthermore, viscosity solutions are approximated by Finite-Difference schemes (see \cite{Crandall1984}).
	
For the sake of clarity, let us recall here that vanishing viscosity solutions and viscosity solutions in the sense of Crandall-Lions are quite different concepts, though they often give the same class of solutions in practice. The latter concept will be used here below.

\subsection{Viscosity solutions}

The equation for the mass is written
\begin{equation*}
	m_t + (m_\rho)^\alpha m = 0
\end{equation*}
which is not problematic since we know that $m_\rho = u \ge 0$. To make a general theory it is better to write
\begin{equation}
	\label{eq:mass Burgers rho +}
	m_t + (m_\rho)_+^\alpha m = 0.
\end{equation}
Then, the Hamiltonian $H(z,p_1,p_2) = (p_2)^\alpha z$ is defined and non-decreasing everywhere.

\medskip

Letting $G (z,p_1,p_2,q) = p_1 + H(z,p_2)$ we have a monotone function. This recalls the theory in \cite{Carrillo2019}. We are not exactly in their setting, since our function is not \emph{weakly increasing}. The authors prove that viscosity solutions of this equation are non-decreasing in $\rho$ ($\rho$-m in their notation). We could apply their existence theory, but not the uniqueness one. Still, the solutions for the general case are continuous, but not necessarily uniformly continuous.

We introduce the definition of viscosity solution for our problem and some notation
\begin{definition}
		Let $f: \Omega \subset \mathbb R^m \to \mathbb R$. We define the Fréchet subdifferential and superdifferential
		\begin{align*}
			D^- u(x) &= \left \{ p \in \mathbb R^m : \liminf_{y \to x} \frac{u(y) - u(x) - p (y-x)}{|y-x|} \ge 0 \right\}\\
			D^+ u(x) &= \left \{ p \in \mathbb R^m : \limsup_{y \to x} \frac{u(y) - u(x) - p (y-x)}{|y-x|} \le 0 \right\}.
		\end{align*}
		
	\end{definition}
	
We recall the following result
\begin{theorem}
	Let $\Omega \subset \mathbb R^m$ be an open set and $f: \Omega \to \mathbb R$ be a continuous function. Then, $p \in D^+ f(x)$ if and only if there exists a function $\varphi \in C^1(I)$ such that $D\varphi (x) = p$ and $f - \varphi$ as a local maximum at $x$.
\end{theorem}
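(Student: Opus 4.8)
The plan is to prove the two implications separately; the reverse one is a direct consequence of differentiability, while the forward one requires constructing a suitable $C^1$ function and is where the real work lies. For the reverse direction, suppose $\varphi$ is $C^1$ on a neighbourhood of $x$ with $D\varphi(x) = p$ and $f - \varphi$ attains a local maximum at $x$. Then $f(y) - f(x) \le \varphi(y) - \varphi(x)$ for $y$ near $x$, and the first-order expansion $\varphi(y) - \varphi(x) = p\cdot(y - x) + o(|y-x|)$ gives $f(y) - f(x) - p\cdot(y-x) \le o(|y-x|)$; dividing by $|y-x|$ and taking the $\limsup$ as $y \to x$ yields $p \in D^+ f(x)$.

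For the forward direction I would first normalise by subtracting the affine part. Setting $g(y) = f(y) - f(x) - p\cdot(y-x)$, the hypothesis $p \in D^+ f(x)$ becomes $\limsup_{y\to x} g(y)/|y-x| \le 0$ with $g(x) = 0$, and it suffices to produce a $C^1$ function $\Psi$ with $\Psi(x) = 0$, $D\Psi(x) = 0$ and $g \le \Psi$ near $x$; then $\varphi = f(x) + p\cdot(\,\cdot - x) + \Psi$ is the desired test function. Taking $x = 0$ for brevity, I would encode the decay of $g$ in the one-variable modulus $h(r) = \sup_{|z| \le r} g(z)$, which is continuous, non-decreasing, satisfies $h(0) = 0$ (since $g(0)=0$) and, by the superdifferential inequality, $h(r) = o(r)$ as $r \to 0^+$, while $h(|y|) \ge g(y)$ for all $y$.

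The key step is to smooth $h$ into a radial $C^1$ upper barrier, and this is the \emph{main obstacle}, since $h$ need not be differentiable. I would take the average $\Phi(r) = \frac{1}{r}\int_r^{2r} h(s)\,\diff s$ for $r>0$, with $\Phi(0)=0$. Monotonicity of $h$ gives $\Phi(r) \ge h(r)$, and since $h$ is continuous the map $r \mapsto \int_r^{2r} h$ is $C^1$, so $\Phi$ is $C^1$ on $(0,\infty)$. The delicate point is the regularity at the origin: from $h(r) = o(r)$ one gets $\Phi(r) \le h(2r) = o(r)$, hence $\Phi(0) = \Phi'(0) = 0$, and differentiating gives $\Phi'(r) = -\Phi(r)/r + \frac{1}{r}\bigl(2h(2r) - h(r)\bigr) \to 0$ as $r \to 0^+$. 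This is exactly what makes $\Psi(y) = \Phi(|y|)$ of class $C^1$ across $0$ with $D\Psi(0) = 0$, because for $y \ne 0$ its gradient $\Phi'(|y|)\,y/|y|$ has norm $\Phi'(|y|) \to 0$. Finally $g(y) \le h(|y|) \le \Phi(|y|) = \Psi(y)$ near $0$ shows that $g - \Psi$, and therefore $f - \varphi$, has a local maximum at $x$, which completes the construction.

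I expect the averaging-and-regularity step to be the only genuinely technical part; an alternative would be to mollify $h$, but the explicit average $\frac1r\int_r^{2r}h$ is preferable because it simultaneously delivers the domination $\Phi \ge h$, the $C^1$-regularity, and the quantitative estimates $\Phi(r),\Phi'(r) = o(1)$ at the origin in one stroke. Everything else reduces to the elementary continuity and monotonicity properties of $h$ and the first-order expansion already used in the reverse implication.
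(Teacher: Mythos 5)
Your proof is correct, and it is essentially the standard argument: the paper itself does not prove this statement (it is recalled as a known fact from the viscosity-solutions literature, cf.\ \cite{Tran2019}), and your construction --- subtract the affine part, form the monotone sup-envelope $h(r)=\sup_{|z|\le r}g(z)$, and regularise via the sliding average $\Phi(r)=\frac{1}{r}\int_r^{2r}h(s)\,\mathrm{d}s$, which dominates $h$ and is $C^1$ with $\Phi(r),\Phi'(r)\to 0$ because $h(r)=o(r)$ --- is exactly the classical one. The only cosmetic slip is writing the norm of the gradient of $\Psi(y)=\Phi(|y|)$ as $\Phi'(|y|)$ rather than $|\Phi'(|y|)|$, which does not affect the conclusion since $|\Phi'(|y|)|\to 0$ either way.
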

With the initial condition $(m_0)_\rho = u_0$ and the fact that $u_0 \in L_{loc}^1(\mathbb R^d)$ (so that the mass over the point $\{0\} = B_0$ is null), we consider the Cauchy problem
\begin{equation}
	\label{eq:mass equation}
	\begin{dcases}
		m_t + (m_\rho)_+^\alpha m = 0 & t > 0, \rho > 0 \\
		m(0,\rho) = m_0(\rho), \\
		m(t,0) = 0.
	\end{dcases}
\end{equation}
The natural setting is with $m_0$ Lipschitz (i.e.\ $m_\rho = u \in L^\infty$) and bounded (i.e.\ $u \in L^1$).

\begin{definition}
	\label{defn:viscosity sub, super and solution}
	We say that a continuous function $m \in \mathcal C([0,+\infty)^2)$ is a:
	\begin{itemize}
		\item[$\bullet$] viscosity subsolution of \eqref{eq:mass equation} if
	\begin{equation*}
		p_1 + (p_2)_+^\alpha m(t, \rho) \le  0, \qquad \forall (t, \rho) \in \mathbb R_+^2 \text{ and } (p_1,p_2) \in D^+ m(t, \rho).
	\end{equation*}
	and $m(0,\rho) \le m_0(\rho)$ and $m(t,0) \le  0$.

	\item[$\bullet$]  a viscosity supersolution of \eqref{eq:mass equation} if
	\begin{equation*}
		p_1 + (p_2)_+^\alpha m(t, \rho) \ge   0, \qquad \forall (t, \rho) \in \mathbb R_+^2 \text{ and } (p_1,p_2) \in D^- m(t, \rho).
	\end{equation*}
	and $m(0,\rho) \ge m_0(\rho)$ and $m(t,0) \ge 0$.
	
	\item[$\bullet$] a viscosity solution of \eqref{eq:mass equation} if it is both a sub and supersolution.
	\end{itemize}
	
	\begin{remark}
		The more general theory in \cite{Carrillo2019} allows for discontinuous sub and supersolution provided they are respectively lower and upper semicontinuous.
	\end{remark}

\end{definition}

\subsection{Comparison principle for the mass}

\begin{theorem}
	\label{thm:comparison principle m}
	Let $m$ and $M$ be uniformly continuous sub and supersolution of \eqref{eq:mass equation} in the sense of Definition~\ref{defn:viscosity sub, super and solution}. Then $m \le M$.
\end{theorem}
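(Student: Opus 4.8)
The plan is to prove the comparison principle by the doubling-of-variables technique of Crandall--Ishii--Lions, adapted to the degenerate first-order equation \eqref{eq:mass equation} whose Hamiltonian is $H(z,p_2)=(p_2)_+^\alpha\, z$. Two structural features will carry the argument: the \emph{monotonicity} (properness) $\partial_z H=(p_2)_+^\alpha\ge 0$, and the global $\alpha$-Hölder bound $|(s)_+^\alpha-(t)_+^\alpha|\le |s-t|^\alpha$. The latter is what compensates for the fact that $p_2\mapsto(p_2)_+^\alpha$ is only Hölder at the origin and that its coefficient $z=m$ multiplies a quantity unbounded in $p_2$; this interplay is where I expect the main difficulty to lie, since the usual Lipschitz-in-gradient hypotheses of the standard comparison theorems fail here.

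First I would fix $T>0$ and argue by contradiction, supposing $\theta:=\sup_{[0,T]\times[0,\infty)}(m-M)>0$; proving $m\le M$ on each such slab yields the claim. To create a strict inequality I replace $m$ by $m_\lambda:=m-\lambda/(T-t)$. A short computation, in which the monotonicity is used to lower $m$ to $m_\lambda$ inside $(p_2)_+^\alpha m$, shows that $m_\lambda$ is a \emph{strict} subsolution, $p_1+(p_2)_+^\alpha m_\lambda\le-\lambda/(T-t)^2$ on $D^+m_\lambda$, and that $m_\lambda\to-\infty$ as $t\to T^-$. For $\lambda$ small, $\sup(m_\lambda-M)>0$ still holds.

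Next I double variables. Since the $\rho$-domain is unbounded and $m,M$ are merely bounded, I penalise at infinity, setting $\langle\rho\rangle=\sqrt{1+\rho^2}$ and
\[
\Phi_\epsilon(t,\rho,s,\sigma)=m_\lambda(t,\rho)-M(s,\sigma)-\frac{|t-s|^2+|\rho-\sigma|^2}{2\epsilon}-\delta\big(\langle\rho\rangle+\langle\sigma\rangle\big).
\]
Boundedness plus the penalties force the maximum to be attained at some $(\hat t,\hat\rho,\hat s,\hat\sigma)$. The blow-up of $m_\lambda$ keeps $\hat t,\hat s$ away from $T$; the parabolic-boundary inequalities $m(0,\cdot)\le m_0\le M(0,\cdot)$ and $m(t,0)\le 0\le M(s,0)$, together with uniform continuity, rule out the maximiser being on $\{t=0\}\cup\{\rho=0\}$ for $\delta$ small and $\epsilon\to 0$. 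The standard estimates then give $|\hat t-\hat s|,|\hat\rho-\hat\sigma|\to 0$, $(|\hat t-\hat s|^2+|\hat\rho-\hat\sigma|^2)/\epsilon\to0$, and $m_\lambda(\hat t,\hat\rho)-M(\hat s,\hat\sigma)\ge\sup\Phi_\epsilon>0$.

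Finally I write the viscosity inequalities at the maximiser. With $a=(\hat t-\hat s)/\epsilon$, $b_1=(\hat\rho-\hat\sigma)/\epsilon+\delta\langle\hat\rho\rangle'$ and $b_2=(\hat\rho-\hat\sigma)/\epsilon-\delta\langle\hat\sigma\rangle'$ (where $\langle\cdot\rangle'\in[0,1)$, so $|b_1-b_2|\le 2\delta$), the strict subsolution property of $m_\lambda$ and the supersolution property of $M$ give $a+(b_1)_+^\alpha m_\lambda(\hat t,\hat\rho)\le-\lambda/(T-\hat t)^2$ and $a+(b_2)_+^\alpha M(\hat s,\hat\sigma)\ge 0$. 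Subtracting and decomposing
\[
(b_1)_+^\alpha m_\lambda-(b_2)_+^\alpha M=\underbrace{(b_1)_+^\alpha\big(m_\lambda(\hat t,\hat\rho)-M(\hat s,\hat\sigma)\big)}_{\ge\,0}+\big[(b_1)_+^\alpha-(b_2)_+^\alpha\big]M(\hat s,\hat\sigma),
\]
the first term is nonnegative by properness and $m_\lambda-M\ge 0$ at the maximiser, while the Hölder bound controls the second by $\|M\|_{L^\infty}(2\delta)^\alpha$ \emph{uniformly in} $\epsilon$ (this is precisely where the potential blow-up of $(b_1)_+^\alpha$ is absorbed into the favourably signed term). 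Hence $-\|M\|_{L^\infty}(2\delta)^\alpha\le-\lambda/T^2$, which is impossible once $\delta$ is chosen so small that $\|M\|_{L^\infty}(2\delta)^\alpha<\lambda/T^2$. This contradiction shows $\theta\le 0$, i.e. $m\le M$.
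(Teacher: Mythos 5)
Your proof is correct and takes essentially the same route as the paper: a Crandall--Lions doubling-of-variables argument whose key step is the identical decomposition of the Hamiltonian difference into a favourably signed term $(b_1)_+^\alpha\bigl(m_\lambda(\hat t,\hat\rho)-M(\hat s,\hat\sigma)\bigr)$, discarded thanks to the positivity of $m_\lambda-M$ at the maximiser (which is exactly how the paper absorbs the unbounded gradient), plus a term controlled by the $\alpha$-H\"older bound on $\tau\mapsto\tau_+^\alpha$ and the boundedness of $M$. The only differences are interchangeable technical devices: you create strictness with $m-\lambda/(T-t)$ on time slabs and penalise infinity with $\delta(\langle\rho\rangle+\langle\sigma\rangle)$, making the H\"older error $O(\delta^\alpha)$ uniformly in $\epsilon$, whereas the paper uses a linear penalty $\lambda(s+t)$ and quadratic terms $\varepsilon(\rho^2+\xi^2)$, so that there the error vanishes as $\varepsilon\to 0$ with $\lambda$ fixed.
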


We apply an old idea by Crandall and Lions \cite{Crandall1984} of variable doubling. For its application we follow the scheme as presented in \cite[Theorem 1.18]{Tran2019} there written for $u_t + H(D_x u) = 0$ with suitable modifications.
\begin{proof}
	Assume, towards a contradiction that
	\begin{equation*}
		\sup_{(t,\rho) \in [0,+\infty)^2} (m(t,\rho) - M(t,\rho)) = \sigma > 0.
	\end{equation*}
	Since both functions are continuous, there exists $(t_1,\rho_1)$ such that $m(t_1,\rho_1) - M(t_1, \rho_1) > \frac {3\sigma}4$. Clearly, $t_1 , \rho_1 > 0$. Let us take $\varepsilon$ and $\lambda$ positive such that
	\begin{equation*}
		\varepsilon < \frac{\sigma}{16(\rho_1 + 1)}, \qquad \lambda < \frac{\sigma}{16(t_1 + 1)}.
	\end{equation*}
	With this choice, we have that
	\begin{equation*}
		2 \varepsilon \rho_1^2 + 2 \lambda t_1 < \frac{\sigma}{4}.
	\end{equation*}
	For this $\varepsilon$ and $\lambda$ fixed, let us construct the variable doubling function:
	\begin{equation*}
		\Phi(t,s,\rho,\xi) = m(t,\rho) - M(s,\xi) - \frac{|\rho-\xi|^2 + |s-t|^2}{\varepsilon^2} - \varepsilon( \rho^2 + \xi^2 ) - \lambda (s + t).
	\end{equation*}
	This function is continuous and bounded above, so it achieves a maximum at some point. Let us name this maximum depending on $\ee$, but not on $\lambda$:
	\begin{equation*}
		\Phi (t_\ee, s_\ee, \rho_\ee, s_\ee) \ge \Phi  (t_1,t_1,\rho_1,\rho_1) > \frac{3\sigma}{4} - 2 \ee \rho_1^2 - 2 \lambda t_1 > \frac{\sigma}{2}.
	\end{equation*}
	In particular, it holds that
	\begin{equation}
		\label{eq:estimate m - M at max of Phi ee}
		m(t_\ee, \rho_\ee) - M(s_\ee, \xi_\ee)  \ge \Phi(t_\ee,s_\ee, \rho_\ee, \xi_\ee) > \frac{\sigma}{2} .
	\end{equation}
	
	\noindent \textbf{Step 1. Variables collapse.} As $\Phi(t_\ee, s_\ee, \rho_\ee, \xi_\ee) \ge \Phi(0,0,0,0) = 0$, we have
	\begin{equation*}
		 \frac{|\rho_\ee-\xi_\ee|^2 + |s_\ee-t_\ee|^2}{\varepsilon^2} + \varepsilon( \rho_\ee^2 + \xi_\ee^2 ) + \lambda (s_\ee + t_\ee) \le m(t_\ee, \rho_\ee) - M(s_\ee, \xi_\ee) \le C.
	\end{equation*}
	Therefore, we obtain
	\begin{equation*}
		|\rho_\ee - \xi_\ee| + |t_\ee - s_\ee| \le C \varepsilon, \qquad \text{and} \qquad \rho_\ee + \xi_\ee \le \frac{C}{\sqrt \ee}.
	\end{equation*}
	This implies that, as $\varepsilon \to 0$, the variable doubling collapses to a single point.
	
	\medskip
	
	We can improve the first estimate using that $\Phi(t_\ee, s_\ee, \rho_\ee, \xi_\ee) \ge \Phi(t_\ee, t_\ee, \rho_\ee, \rho_\ee)$. This gives us
	\begin{align*}
		\frac{|\rho_\ee-\xi_\ee|^2 + |s_\ee-t_\ee|^2}{\varepsilon^2} &\le M(t_\ee, \rho_\ee) - M(s_\ee, \xi_\ee) + \varepsilon(\rho_\ee^2 - \xi_\ee^2) + \lambda(t_\ee - s_\ee) \\
		&\le M(t_\ee, \rho_\ee) - M(s_\ee, \xi_\ee) + \varepsilon \frac{C}{\sqrt \ee} C \ee + C \ee.
	\end{align*}
	Since $M$ is uniformly continuous, we have that
	\begin{equation*}
		\lim_{\ee \to 0}\frac{|\rho_\ee-\xi_\ee|^2 + |s_\ee-t_\ee|^2}{\varepsilon^2} = 0.
	\end{equation*}
	
	\noindent \textbf{Step 2. For $\ee > 0$ sufficiently small, the points are interior.} We show that there exists $\mu$ such that $t_\ee, s_\ee , \rho_\ee, \xi_\ee \ge \mu > 0$ for $\varepsilon>0$ small enough.  For this, since $m$ and $M$ are uniformly continuous
	\begin{align*}
		\frac{\sigma}{2} &<  m(t_\ee, \rho_\ee) - M(s_\ee, \xi_\ee) \\
		&=m(t_\ee, \rho_\ee) - m(0, \rho_\ee) + m(0, \rho_\ee) - M(0,\rho_\ee) \\
		&\qquad + M(0,\rho_\ee) - M(t_\ee, \rho_\ee) + M(t_\ee, \rho_\ee) - M(s_\ee, \xi_\ee) \\
		&\le \omega(t_\ee) + \omega( |\rho_\ee - \xi_\ee| + |t_\ee -s_\ee| ),
	\end{align*}
	where $\omega \ge 0$ is a modulus of continuity (the minimum of the moduli of continuity of $m$ and $M$),
	i.e.\ a continuous non-decreasing function such that $\lim_{r \to 0} \omega(r) = 0$. For $\varepsilon > 0$ such that
	\begin{equation*}
		\omega( |\rho_\ee - \xi_\ee| + |t_\ee -s_\ee| ) < \frac{\sigma}{4},
	\end{equation*}
	we have $\omega(t_\ee) > \frac{\sigma}{4}$. The reasoning is analogous for $s_\varepsilon$.
	For $\rho_\ee$ we can proceed much in the same manner
	\begin{align*}
		\frac{\sigma}{2} & < m(t_\ee, \rho_\ee) - M(s_\ee, \xi_\ee) \\
		&= m(t_\ee, \rho_\ee) - m(t_\ee, 0) + M(t_\ee,0) - M(t_\ee, \rho_\ee) \\
		&\qquad  + M(t_\ee, \rho_\ee) - M(s_\ee,\xi_\ee) \\
		&\le \omega(\rho_\ee) + \omega( |\rho_\ee - \xi_\ee| + |t_\ee -s_\ee| ).
	\end{align*}
	And analogously for $\xi_\ee$.
	
	\noindent \textbf{Step 3. Choosing viscosity test functions.} With the construction we have made, the function $(t,\rho) \mapsto \Phi(t,s_\ee, \rho, \xi_\ee)$ has a maximum at $(t_\ee, \rho_\ee)$. Thus, so does the function
	\begin{equation*}
		(t,\rho) \mapsto m(t,\rho) - \left( \frac{|\rho-\xi_\ee|^2 + |t- s_\ee|^2}{\varepsilon^2} + \ee \rho^2 + \lambda t  \right) = m(t, \rho) - \bar \varphi_\ee (t,\rho).
	\end{equation*}
	We must be careful to ensure that the test function has contact with $m$ at the right point $(t_\ee,\rho_\ee)$:
	\begin{equation*}
		\varphi_\ee (t, \rho) = \bar\varphi_\ee(t,\rho) + m(t_\ee, \rho_\ee) - \bar \varphi_\ee(t_\ee,\rho_\ee).
	\end{equation*}
	In fact, this is equivalent to
	\begin{equation*}
		D\varphi_\ee (t_\ee, \rho_\ee) = D\overline \varphi_\ee (t_\ee, \rho_\ee) \in D^+ m(t_\ee, \rho_\ee).
	\end{equation*}
	Since $m$ is a viscosity subsolution, we recover
	\begin{equation}
		\label{eq:comparison m viscosity}
		\frac{2 (t_\ee - s_\ee)}{\varepsilon^2} + \lambda + \left( \frac{2(\rho_\ee - \xi_\ee) }{\varepsilon^2} + 2 \ee \rho_\ee  \right)_+^\alpha m(t_\ee, \rho_\ee) \le 0.
	\end{equation}
	Analogously, the following function has a minimum at $(s_\ee, \xi_\ee)$:
	\begin{equation*}
		(s,\xi) \mapsto M(s,\xi) - \left(- \frac{|\rho_\ee-\xi|^2 + |s-t_\ee|^2}{\varepsilon^2} - \ee \xi^2 - \lambda s  \right) = M(s, \xi) - \bar \psi_\ee (s,\xi).
	\end{equation*}
	Again, the correct test function is
	\begin{equation*}
		\psi_\ee (s, \xi) = \bar\psi_\ee(s,\xi) + M(s_\ee, \xi_\ee) - \bar \psi_\ee(s_\ee,\xi_\ee).
	\end{equation*}
	Since $M$ is a viscosity supersolution, we recover
	\begin{equation}
		\label{eq:comparison M viscosity}
		\frac{2 (t_\ee - s_\ee)}{\varepsilon^2} - \lambda  + \left( \frac{2(\rho_\ee - \xi_\ee) }{\varepsilon^2} + 2 \ee \xi_\ee  \right)_+^\alpha M (s_\ee , \xi_\ee) \ge 0.
	\end{equation}
	
	\noindent \textbf{Step 4. A contradiction.} Taking the difference between \eqref{eq:comparison m viscosity} and \eqref{eq:comparison M viscosity}, we have that
	\begin{align*}
		2\lambda &\le \left( \frac{2(\rho_\ee - \xi_\ee) }{\varepsilon^2} + 2 \ee \xi_\ee  \right)_+^\alpha M_\ee (s_\ee, \xi_\ee)
		 - \left( \frac{2(\rho_\ee - \xi_\ee) }{\varepsilon^2} + 2 \ee \rho_\ee  \right)_+^\alpha m_\ee (t_\ee, \rho_\ee)\\
		 &= \left[\left( \frac{2(\rho_\ee - \xi_\ee) }{\varepsilon^2} + 2 \ee \xi_\ee  \right)_+^\alpha - \left( \frac{2(\rho_\ee - \xi_\ee) }{\varepsilon^2} + 2 \ee \rho_\ee  \right)_+^\alpha\right] M(s_\ee, \xi_\ee)\\
		 &\quad + \left( \frac{2(\rho_\ee - \xi_\ee) }{\varepsilon^2} + 2 \ee \rho_\ee  \right)_+^\alpha (M(s_\ee, \xi_\ee) - m(t_\ee, \rho_\ee))\\
		 &\le C \left| 2 \ee (\rho_\ee - \xi_\ee) \right|^\alpha   - \frac{\sigma}2 \left( \frac{2(\rho_\ee - \xi_\ee) }{\varepsilon^2} + 2 \ee \rho_\ee  \right)_+^\alpha \\
		 &\le C \left| 2 \ee (\rho_\ee - \xi_\ee) \right|^\alpha,
	\end{align*}
	due to the Hölder continuity of $\tau \mapsto \tau_+^\alpha$, the fact that $M$ is bounded and \eqref{eq:estimate m - M at max of Phi ee}. As $\ee \to 0$ recover the contradiction
	$0 < 2 \lambda \le 0$,
	and the proof is complete.
\end{proof}

\begin{remark}
	In the argument above, the uniform continuity condition plays a key role. Notice that it is possible that $\rho_\ee, \xi_\ee  \to +\infty$, and hence the continuity must be uniform to obtain the comparison estimate.
	
It was pointed out in \cite[Remark 4.2]{crandall+evans+lions2006} that the assumption of uniform continuity can be weakened (with minor modifications to the proof) to uniform continuity of $u_0, v_0$ and uniform convergence of $u(x, t) \to u_0(x)$ and $v(x, t) \to v_0(x)$ as $t \to 0$.
\end{remark}

\begin{remark}
	Notice that this proof can be extended to equations of the form $m_t + H(m_\rho) m = 0$ where $H \ge 0$ and uniformly continuous.
\end{remark}

\begin{remark}
	Notice that \cite[Theorem V.3]{Crandall1983} covers the case $\alpha \ge 1$, and furthermore gives information on the cone of dependence. Naturally, in our setting there is no cone of dependence.
\end{remark}

As a simple consequence of the comparison principle, we can take advantage of our explicit solutions for $u$ in \Cref{sec:explicit solutions}. The mass corresponding to the friendly giant should be a global supersolution. We compute the corresponding mass, which gives
\begin{equation*}
	M(t, \rho) =  ( \alpha t ) ^{-\frac 1 \alpha}\rho.
\end{equation*}
This is a classical solution of the equation
\begin{equation*}
	M_t + (M_\rho)_+^\alpha M =
	-\frac 1 \alpha\rho ( \alpha t ) ^{-\frac 1 \alpha - 1} + \left(( \alpha t ) ^{-\frac 1 \alpha-1} \right ) ^\alpha \left( \rho ( \alpha t ) ^{-\frac 1 \alpha} \right) = 0.
\end{equation*}
It is uniformly continuous for $ t > \mu$. We can apply the proof of the comparison principle to this very nice classical solution, and hence we deduce that
\begin{equation}
	\label{eq:global bound m}
	m(t,\rho) \le (\alpha t)^{-\frac 1 \alpha} \rho
\end{equation}
holds for all uniformly continuous viscosity solutions.
\begin{remark}
	This is known for Burger's as the universal or absolute supersolution.
\end{remark}

\begin{remark}
	Notice that this implies $m(t,0) \le 0$ for all $t \ge 0$. Since $m \equiv 0$ is also a solution, we check that, for all $m_0 \ge 0$, then the viscosity solution satisfies $m(t,0) = 0$ and $m(t,\rho)\ge 0$.
\end{remark}

\begin{remark}
	Formula \eqref{eq:global bound m} shows us that, for initial data $m_0 \ge 0$ and for all $\rho$ fixed, $m(t, \rho) \to 0$ as $t\to+\infty$, i.e.\ eventually all mass travels to infinity.
\end{remark}

\subsection{The mass of \texorpdfstring{\eqref{eq:PDE regularised}}{(P eps)} converges to the viscosity solution of \texorpdfstring{\eqref{eq:mass equation}}{(\ref{eq:mass equation})}}

\begin{theorem}
	\label{thm:converges mass PDE regularised}
	Let $d \ge 1$, $u_0 \in L^1 (\mathbb R^d) \cap \mathcal C_b(\mathbb R^d)$ radially symmetric. Let $u_\ee$ be the solution of \eqref{eq:PDE regularised}, $m_\ee$ its mass. Then $m_\ee \to m$ in $\mathcal C_{loc}([0,+\infty) \times [0,+\infty))$ where  $m$ is the viscosity solution of \eqref{eq:mass equation}.
	Furthemore, $m$ is Lipschitz continuous (in variable $\rho$).
\end{theorem}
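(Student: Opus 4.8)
The plan is to run a vanishing-viscosity argument: extract a locally uniform limit of the regularised masses $m_\ee$, show by stability that the limit is a viscosity solution of \eqref{eq:mass equation}, and invoke the comparison principle \Cref{thm:comparison principle m} to identify it uniquely. First I would record the a priori bounds that follow at once from \Cref{prop:PDE regularised estimates}: since $u_0\ge 0$ we have $0\le u_\ee\le \|u_0\|_{L^\infty}=:L$ and $\|u_\ee(t)\|_{L^1}\le\|u_0\|_{L^1}=:M$. Because in the volume variable $m_\ee(t,\rho)=\int_0^\rho u_\ee$, this gives $0\le m_\ee\le M$ and $\partial_\rho m_\ee=u_\ee\in[0,L]$, so the family is uniformly bounded and uniformly Lipschitz in $\rho$ with constant $L$. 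This bound is inherited by any limit and already yields the claimed Lipschitz regularity in $\rho$.

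The crux is a modulus of continuity in $t$ that is uniform in $\ee$. I would first treat smooth, compactly supported $u_0\ge0$ and differentiate the regularised mass equation \eqref{eq:PDE regularised mass} in time. Writing $A_\ee(\rho)=\ee\,\big(d\,\omega_d^{1/d}\rho^{(d-1)/d}\big)^2$ for the (degenerate) diffusion coefficient and $w=\partial_t m_\ee$, one finds that $w$ solves a linear parabolic equation
\begin{equation*}
 w_t = A_\ee(\rho)\,w_{\rho\rho} - \alpha\big((m_\ee)_\rho+\ee\big)^{\alpha-1} m_\ee\, w_\rho - \big((m_\ee)_\rho+\ee\big)^{\alpha} w,
\end{equation*}
whose zeroth-order coefficient is $\le 0$ and which carries the boundary value $w(t,0)=\partial_t m_\ee(t,0)=0$. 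The maximum principle then gives $\|w(t,\cdot)\|_{L^\infty}\le\|w(0,\cdot)\|_{L^\infty}$, and the initial time-derivative
\begin{equation*}
 w(0,\rho)=-\big((m_0)_\rho+\ee\big)^{\alpha} m_0 + A_\ee(\rho)\,(m_0)_{\rho\rho}
\end{equation*}
is bounded uniformly in $\ee$: the first term by $(L+1)^\alpha M$, and the second because on the compact support $A_\ee(\rho)\le \ee\,C_R$ while $(m_0)_{\rho\rho}=\partial_\rho u_0$ is bounded. Hence $|\partial_t m_\ee|\le C$ uniformly, i.e.\ a uniform Lipschitz bound in $t$. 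I expect this to be the main obstacle: the coefficient $A_\ee(\rho)\sim \ee\,\rho^{2(d-1)/d}$ degenerates at $\rho=0$ and grows at infinity for $d\ge2$, which is precisely why one must keep the boundary condition at $\rho=0$ and reduce to compactly supported data to control the viscous contribution to $w(0,\cdot)$.

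With uniform bounds and equicontinuity in both variables in hand (for smooth data), the Arzel\`a--Ascoli theorem produces a subsequence converging locally uniformly to some $m\in\mathcal C([0,+\infty)^2)$ with $m(0,\cdot)=m_0$, $m(t,0)=0$, $0\le m\le M$. The stability step is the standard vanishing-viscosity argument: if $\varphi\in C^1$ touches $m$ from above at an interior point $(t_0,\rho_0)$, local uniform convergence yields nearby maxima $(t_\ee,\rho_\ee)$ of $m_\ee-\varphi$ at which $\partial_t m_\ee=\partial_t\varphi$, $\partial_\rho m_\ee=\partial_\rho\varphi$ and $\partial_{\rho\rho}m_\ee\le\partial_{\rho\rho}\varphi$; inserting this into \eqref{eq:PDE regularised mass} and letting $\ee\to0$, the term $A_\ee(\rho_\ee)\partial_{\rho\rho}m_\ee$ disappears and one obtains $\partial_t\varphi+(\partial_\rho\varphi)_+^\alpha m\le0$ at $(t_0,\rho_0)$, the subsolution inequality; the supersolution inequality is symmetric. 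Thus $m$ is a viscosity solution of \eqref{eq:mass equation}, and being Lipschitz it satisfies the equation a.e., whence $|\partial_t m|=(m_\rho)_+^\alpha m\le L^\alpha M$, so $m$ is Lipschitz in $t$ as well, hence uniformly continuous. The comparison principle \Cref{thm:comparison principle m} now applies and forces every subsequential limit to coincide, so the whole family $m_\ee$ converges.

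Finally, for general $u_0\in L^1(\mathbb R^d)\cap\mathcal C_b(\mathbb R^d)$ I would approximate by smooth compactly supported $u_0^{(k)}\to u_0$ in $L^1$ with $\|u_0^{(k)}\|_{L^\infty}\le L$. The decisive structural remark is that, since the Hamiltonian $(p)_+^\alpha z$ is nonnegative and nondecreasing in $z$, adding a nonnegative constant $c$ to any solution of \eqref{eq:PDE regularised mass} (resp.\ \eqref{eq:mass equation}) produces a supersolution with initial datum shifted by $c$; comparison then yields the contraction
\begin{equation*}
 \|m_\ee^{(k)}-m_\ee\|_{L^\infty}\le\|m_0^{(k)}-m_0\|_{L^\infty}\le\|u_0^{(k)}-u_0\|_{L^1},
\end{equation*}
uniformly in $\ee$, and likewise at the level of the limit viscosity solutions. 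A $3\ee$-argument upgrades the convergence $m_\ee\to m$ from smooth to general data, the Lipschitz-in-$\rho$ constant $L$ and the Lipschitz-in-$t$ constant $L^\alpha M$ being preserved in the limit. This also shows that $m$ is the unique uniformly continuous viscosity solution, completing the argument.
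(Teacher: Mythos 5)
Your proposal is correct, and its skeleton is the same as the paper's proof: uniform bounds $0\le m_\ee\le\|u_0\|_{L^1}$ and $(m_\ee)_\rho=u_\ee\in[0,\|u_0\|_{L^\infty}]$, Arzel\`a--Ascoli compactness, the standard vanishing-viscosity stability argument (a $\mathcal C^2$ test function touching the limit at a maximum, nearby maxima $(t_\ee,\rho_\ee)$ of $m_\ee-\varphi$, and discarding the viscous term in the limit), and identification of every subsequential limit through the comparison principle \Cref{thm:comparison principle m}. The genuine difference is that you supply two ingredients the paper's proof leaves implicit. First, the paper invokes Arzel\`a--Ascoli after recording only the spatial Lipschitz bound; equicontinuity in $t$, uniform in $\ee$, is not addressed there and cannot be read off \eqref{eq:PDE regularised mass} directly, because the viscous term involves $(m_\ee)_{\rho\rho}$ which is not uniformly controlled. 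Your bound on $w=\partial_t m_\ee$ — obtained by differentiating \eqref{eq:PDE regularised mass} in time, noting the zeroth-order coefficient $-(u_\ee+\ee)^\alpha\le 0$ and the boundary value $w(t,0)=0$, and applying the maximum principle — fills exactly this gap for smooth compactly supported data. Second, your $L^\infty$-contraction $\|m_\ee^{(k)}-m_\ee\|_{L^\infty}\le\|u_0^{(k)}-u_0\|_{L^1}$, uniform in $\ee$ (coming from the observation that adding a nonnegative constant produces a supersolution), transfers the result to general $u_0\in L^1\cap\mathcal C_b$ by a $3\ee$ argument, a layer the paper does not include. The only caveats are standard and affect the paper equally: your maximum/comparison principles act on an unbounded domain where the coefficient $A_\ee(\rho)$ degenerates at $\rho=0$ and grows like $\rho^{2(d-1)/d}$, so one needs a Phragm\'en--Lindel\"of-type argument valid for bounded solutions, and one should check that $\partial_t m_\ee$ is indeed a bounded classical solution of the differentiated equation; both are routine for the regularised problem. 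In short, your proof is a more complete rendering of the paper's argument rather than a different route.
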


\begin{proof}
	We first point out that $0\le m_\ee \le \| u_0 \|_{L^1 (\mathbb R^d)}$ and
	\begin{equation}\label{eq:viscosity wellposedness bound m_rho}
	(m_\ee)_\rho = u_\ee \in [0, \|u_0\|_{L^\infty (\mathbb R^d)}].
	\end{equation}
	Furthermore, we know that the solution is classical and $(m_\ee)_{\rho \rho}$ is also continuous.
	Hence, by the Ascolí-Arzelá theorem there is a convergent subsequence $m_\ee \to  m$ in $\mathcal C_{loc}([0,+\infty) \times [0,+\infty))$. Let us now check that $m$ is viscosity solution. We begin by showing it is a viscosity subsolution and, likewise, one proves it is a supersolution.  Fix $t,\rho > 0$, and let $(p_1,p_2) \in D^+ m(t, \rho)$. Again, we can construct a function $\varphi \in \mathcal C^2$ such that $m - \varphi$ has a local maximum at $(t,\rho)$ and $p_1 = \varphi_t(t,\rho), p_2=\varphi_\rho(t, \rho)$. Now we need to prove that the quantity
	\begin{equation*}
	\varphi_t(t,\rho) + (\varphi_\rho(t, \rho))_+^\alpha \, m(t, \rho)
	\end{equation*}
	is non-positive. For this, we go back to the viscosity equation. Since $m^\ee \to m$ and $m - \varphi$ has a maximum at $(t, \rho)$, by \cite[Lemma 1.8]{Tran2019} there exists a sub-sequence $\ee$ and $(t_\ee,\rho_\ee)$ of values such that $m^\ee - \varphi$ as a maximum at $(t_\ee,\rho_\ee)$ and, furthermore, $(t_\ee,\rho_\ee) \to (t, \rho)$ as $\ee \to 0$. We go back to \eqref{eq:PDE regularised mass}
	\begin{equation}
	\frac{\partial m_\ee}{\partial t} + \left(  \left( \frac{\partial m_\ee}{\partial \rho} \right )_+ + \ee \right)^\alpha  m_\ee
	 =  \ee C_d(\rho)\frac{ \partial^2 m_\ee}{\partial \rho^2}  .
	\end{equation}
	where $C_d(\rho)$ is defined and positive outside $\rho = 0$. For $\ee $ small enough $\rho_\ee > 0$ and, hence
	\begin{equation*}
	\frac{\partial m_\ee}{\partial t} (t_\ee, \rho_\ee) = \frac{\partial \varphi}{\partial t} (t_\ee, \rho_\ee), \qquad   \frac{\partial m_\ee}{\partial \rho} (t_\ee, \rho_\ee) = \frac{\partial \varphi}{\partial \rho} (t_\ee, \rho_\ee)
	\end{equation*}
	and
	\begin{equation*}
	\frac{\partial^2 m_\ee}{\partial \rho^2} (t_\ee, \rho_\ee) \le  \frac{\partial^2 \varphi}{\partial \rho^2} (t_\ee, \rho_\ee).
	\end{equation*}
	Hence,
	\begin{align*}
	\varphi_t(t_\ee,\rho_\ee) + \Big(\ee + (\varphi_\rho(t_\ee,\rho_\ee))_+\Big)^\alpha \, m_\ee (t_\ee,\rho_\ee) &\le  \ee C_d(\rho_\ee)\frac{ \partial^2 \varphi }{\partial \rho^2}(t_\ee,\rho_\ee) .
	\end{align*}
	As $\ee \to 0$ we have that
	\begin{equation*}
	\varphi_t(t,\rho) +  (\varphi_\rho(t,\rho))_+^\alpha \, m (t,\rho) \le 0.
	\end{equation*}
	Hence, we recover the sign
	\begin{equation*}
	p_1 +  (p_2)_+^\alpha \, m (t,\rho) \le 0.
	\end{equation*}
	Since this viscosity solution $m$ is unique by the comparison principle, the whole sequence $m_\ee$ converges.
\end{proof}

\subsection{Stability}\label{stability}

\begin{theorem}
	\label{thm:stability}
	Let $m_j$ be viscosity subsolutions (resp. supersolutions) of \eqref{eq:mass equation}, and assume that $m_j \to m$ uniformly over compacts as $j \to +\infty$. Then, $m$ is a viscosity subsolution (resp. supersolution) of \eqref{eq:mass equation}.
\end{theorem}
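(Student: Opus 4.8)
The plan is to prove only the subsolution statement, since the supersolution case is obtained by the symmetric argument (reversing every inequality and replacing $D^+$ by $D^-$). The strategy is the classical stability mechanism for viscosity solutions based on converging maximum points, essentially the same device already used in the proof of \Cref{thm:converges mass PDE regularised} via \cite[Lemma 1.8]{Tran2019}.

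First I would fix an interior point $(t_0,\rho_0)\in\mathbb R_+^2$ and an element $(p_1,p_2)\in D^+ m(t_0,\rho_0)$. By the characterisation of the superdifferential recalled above, I choose $\varphi\in C^1$ with $D\varphi(t_0,\rho_0)=(p_1,p_2)$ and such that $m-\varphi$ has a local maximum at $(t_0,\rho_0)$. To make the subsequent perturbation argument robust, I would replace $\varphi$ by $\varphi+|(t,\rho)-(t_0,\rho_0)|^2$, so that the maximum becomes \emph{strict} while the gradient at $(t_0,\rho_0)$ is unchanged. The goal is then to establish
\[
	p_1+(p_2)_+^\alpha\, m(t_0,\rho_0)\le 0.
\]

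The key step is the production of nearby maxima for the approximating functions. Since $m_j\to m$ uniformly on compacts and $m-\varphi$ has a strict local maximum at $(t_0,\rho_0)$, the standard lemma yields local maximum points $(t_j,\rho_j)$ of $m_j-\varphi$ with $(t_j,\rho_j)\to(t_0,\rho_0)$, and consequently $m_j(t_j,\rho_j)\to m(t_0,\rho_0)$ by uniform convergence. For $j$ large these points are interior, so $D\varphi(t_j,\rho_j)\in D^+ m_j(t_j,\rho_j)$, and the subsolution inequality for $m_j$ gives
\[
	\varphi_t(t_j,\rho_j)+\big(\varphi_\rho(t_j,\rho_j)\big)_+^\alpha\, m_j(t_j,\rho_j)\le 0.
\]

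Finally I would let $j\to\infty$. As $\varphi\in C^1$, its derivatives converge at $(t_j,\rho_j)\to(t_0,\rho_0)$; the map $\tau\mapsto\tau_+^\alpha$ is continuous; and $m_j(t_j,\rho_j)\to m(t_0,\rho_0)$, so the inequality passes to the limit and delivers the required subsolution condition. The initial and boundary inequalities $m(0,\rho)\le m_0(\rho)$ and $m(t,0)\le 0$ follow at once from the pointwise convergence $m_j\to m$ together with the corresponding inequalities for each $m_j$. The only genuinely delicate point is the construction of converging maxima; the limit of the Hamiltonian $H(z,p_2)=(p_2)_+^\alpha z$ is harmless precisely because it is continuous in both arguments (only Hölder in $p_2$), and because the dependence on the zeroth-order term $z=m$ is controlled by uniform — rather than merely pointwise — convergence. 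I therefore expect no obstacle beyond invoking the cited lemma correctly.
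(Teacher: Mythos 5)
Your proposal is correct and follows essentially the same route as the paper's proof: fix $(p_1,p_2)\in D^+m$, pick a $C^1$ test function touching from above, invoke the converging-maxima lemma (\cite[Lemma 1.8]{Tran2019}) to get local maxima of $m_j-\varphi$ converging to the point, apply the subsolution inequality for $m_j$ there, and pass to the limit using continuity of $\tau\mapsto\tau_+^\alpha$ and uniform convergence. Your two refinements — perturbing $\varphi$ to make the maximum strict (which the cited lemma genuinely requires) and checking the initial/boundary inequalities — are points the paper's proof glosses over, so they strengthen rather than alter the argument.
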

\begin{proof}
	Fix $t,\rho > 0$, and let $(p_1,p_2) \in D^+ m(t, \rho)$. We can construct a function $\varphi \in \mathcal C^1$ (see \cite[Theorem 1.4]{Tran2019}) such that $m - \varphi$ has a local maximum at $(t,\rho)$ and $p_1 = \varphi_t(t,\rho), p_2=\varphi_\rho(t, \rho)$. Now we need prove that the quantity
	\begin{equation*}
		\varphi_t(t,\rho) + (\varphi_\rho(t, \rho))_+^\alpha \, m(t, \rho)
	\end{equation*}
	Since $m_j \to m$ and $m - \varphi$ has a maximum at $(t_j, \rho)$, by \cite[Lemma 1.8]{Tran2019} there exists a sub-sequence $\ee$ and $(t_j,\rho_j)$ of values such that $m^\ee - \varphi$ as a maximum at $(t_j,\rho_j)$ and, furthermore, $(t_j,\rho_j) \to (t, \rho)$ as $j \to +\infty$. Then we have that
	\begin{equation*}
		\varphi_t (t_j,\rho_j) + (\varphi_\rho(t_j,\rho_j))_+^\alpha \, m_j (t_j,\rho_j) \le 0.
	\end{equation*}
	As $j \to +\infty$ we recover the definition of viscosity subsolution for $m$.
\end{proof}

We can also prove continuous dependence on the data, using this result. If $m_{0,j} \to m_0$, then the solutions converge uniformly.

\subsection{Well-posedness}

The mass associated to solutions of problem \eqref{eq:PDE regularised} are always Lipschitz continuous, but the comparison principle holds true for uniformly continuous. Let us show that, for $m_0 \in \BUC$ (i.e.\ bounded and uniformly continuous), there is a viscosity solution $u \in \BUC$.

\begin{theorem}
	\label{thm:viscosity mass well-posedness}
	Let $m_0 \in \BUC(\mathbb R^d)$ be non-decreasing such that $m_0(0) = 0$. Then, there exists a unique bounded and uniformly continuous viscosity solution. If $m_0$ is Lipschitz, then so is $m$.
\end{theorem}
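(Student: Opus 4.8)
The plan is to derive uniqueness from the comparison principle \Cref{thm:comparison principle m}, and existence from an approximation argument that combines the convergence result \Cref{thm:converges mass PDE regularised} with the stability of viscosity solutions \Cref{thm:stability}. The comparison principle also supplies the contraction estimate on which the whole construction rests. First I would observe that if $M$ is a supersolution and $\delta\ge 0$, then $M+\delta$ is again a supersolution: $D^-(M+\delta)=D^-M$ and $(p_2)_+^\alpha(M+\delta)=(p_2)_+^\alpha M+(p_2)_+^\alpha\delta\ge(p_2)_+^\alpha M$, while the initial and boundary inequalities are only relaxed. Consequently, given two bounded uniformly continuous solutions $m^{(1)},m^{(2)}$ with data $m_0^{(1)},m_0^{(2)}$ and $\delta=\|m_0^{(1)}-m_0^{(2)}\|_\infty$, the function $m^{(2)}+\delta$ may be regarded as a supersolution of the problem with datum $m_0^{(1)}$, so \Cref{thm:comparison principle m} gives $m^{(1)}\le m^{(2)}+\delta$; by symmetry $\|m^{(1)}-m^{(2)}\|_\infty\le\delta$. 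Taking $m_0^{(1)}=m_0^{(2)}$ yields uniqueness, and the same estimate gives continuous dependence on the data.

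For existence I would mollify: take $m_{0,j}=m_0*\phi_j$ shifted to satisfy $m_{0,j}(0)=0$, which keeps each $m_{0,j}$ smooth, non-decreasing, and does not increase the modulus of continuity (so its modulus is bounded by that $\omega_0$ of $m_0$), with $m_{0,j}\to m_0$ uniformly. Then $u_{0,j}=(m_{0,j})_\rho\in L^1\cap\mathcal C_b$, so \Cref{thm:converges mass PDE regularised} produces a viscosity solution $m_j$ of \eqref{eq:mass equation} that is Lipschitz in $\rho$. The contraction estimate gives $\|m_i-m_j\|_\infty\le\|m_{0,i}-m_{0,j}\|_\infty\to 0$, so $m_j$ converges uniformly on $[0,\infty)^2$ to a bounded limit $m$ with $0\le m\le\lim_{\rho\to\infty}m_0(\rho)$, and by \Cref{thm:stability} the limit $m$ is a viscosity solution of \eqref{eq:mass equation}.

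It remains to prove uniform continuity of $m$, which is exactly what closes the uniqueness statement. I would first note that the time-independent function $m_{0,j}(\rho)$ is itself a supersolution (since $u_{0,j}\ge 0$ and $m_{0,j}\ge 0$), so comparison gives $m_j(t,\rho)\le m_{0,j}(\rho)$ and hence that $m_j$ is non-increasing in $t$. For the spatial modulus I would exploit that \eqref{eq:mass equation} is autonomous in $\rho$: for $h>0$ the shift $w(t,\rho)=m_j(t,\rho+h)$ is a subsolution with $w(t,0)=m_j(t,h)\le m_{0,j}(h)\le\omega_0(h)$, which matches the supersolution $m_j+\omega_0(h)$; comparison then yields the bound $0\le m_j(t,\rho+h)-m_j(t,\rho)\le\omega_0(h)$, a spatial modulus uniform in $t$ and $j$ that passes to the limit. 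For the time variable I would use autonomy in $t$: $m_j(\cdot+s,\cdot)$ solves \eqref{eq:mass equation} with datum $m_j(s,\cdot)$, so the contraction estimate gives $\sup_{t,\rho}|m_j(t+s,\rho)-m_j(t,\rho)|\le\sup_\rho|m_j(s,\rho)-m_{0,j}(\rho)|$, reducing everything to a uniform-in-$j$ bound on $\sup_\rho(m_{0,j}(\rho)-m_j(s,\rho))$ tending to $0$ as $s\to0$. Finally, the Lipschitz claim follows because, when $m_0$ is Lipschitz, the approximants can be chosen with uniformly bounded Lipschitz constants and \Cref{thm:converges mass PDE regularised} already gives Lipschitz dependence in $\rho$, which the moduli above promote to joint Lipschitz continuity.

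The hard part will be precisely this last reduction, namely the uniform-in-$j$ modulus at $t=0$ (equivalently, the time modulus). Since the doubling argument underlying \Cref{thm:comparison principle m} genuinely permits $\rho_\ee\to+\infty$, there is no localisation available, and one must produce \emph{global} barriers whose closeness to $m_0$ is controlled by $\omega_0$ uniformly in the base point. I expect to obtain the upper control for large times from the friendly-giant bound \eqref{eq:global bound m}, $m_j(s,\rho)\le(\alpha s)^{-1/\alpha}\rho$, and the matching lower bound $m_j(s,\rho_*)\ge m_0(\rho_*)-o(1)$ by comparing from below with a spatially shifted self-similar solution of \Cref{sec:explicit solutions} carrying the mass $m_0(\rho_*)$; making these estimates uniform in the reference point $\rho_*$ and in $j$ is where the real work lies.
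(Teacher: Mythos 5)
Your route is genuinely different from the paper's, and the part of it that concerns uniqueness is both correct and cleaner. You mollify $m_0$ and drive the construction with a quantitative contraction estimate, obtained from the (correct) observation that adding a nonnegative constant to a supersolution yields a supersolution because the Hamiltonian $(p_2)_+^\alpha m$ is nondecreasing in $m$; the paper instead approximates $m_0$ from above and below by \emph{ordered} sequences of Lipschitz data, orders the corresponding solutions via \Cref{thm:comparison principle m}, passes to monotone limits by Dini's theorem, and only at the end identifies the upper and lower envelopes by comparison. Both constructions rest on \Cref{thm:converges mass PDE regularised} for regular data and on \Cref{thm:stability} for the limit. Your contraction buys something the paper's qualitative argument does not give directly: \emph{global} uniform convergence of the approximants, hence uniform continuity of the limit, uniqueness and continuous dependence in one stroke. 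Note also that your concern about uniform-in-$j$ moduli is a red herring: a uniform limit of uniformly continuous functions is uniformly continuous, so per-$j$ regularity of $m_j$ suffices once the contraction gives uniform Cauchyness; likewise your shift argument for a spatial modulus is unnecessary, since \Cref{thm:converges mass PDE regularised} already gives Lipschitz continuity in $\rho$ for each fixed $j$.

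The genuine gap is a circularity in the regularity step, and it sits exactly where you say the hard part is. \Cref{thm:comparison principle m} is stated and proved only for sub/supersolutions that are uniformly continuous \emph{jointly} in $(t,\rho)$ — Step 2 of its proof uses the time modulus of both functions — whereas \Cref{thm:converges mass PDE regularised} delivers Lipschitz continuity only in the variable $\rho$. Consequently, every comparison you invoke that involves an approximant $m_j$ (the contraction between $m_i$ and $m_j$, the bound $m_j\le m_{0,j}$, the comparison of $m_j(\cdot,\cdot+h)$ with $m_j+\omega_0(h)$, and the friendly-giant/self-similar barriers of your last paragraph) presupposes precisely the joint uniform continuity of $m_j$ that you set out to prove; as written, the time modulus is derived from comparisons that are only licensed once the time modulus is known. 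What breaks the circle is a regularity statement proved \emph{without} the comparison theorem: for instance, the standard lemma that a bounded viscosity solution of $m_t+H(m,m_\rho)=0$, Lipschitz in $\rho$ uniformly in $t$, is automatically Lipschitz in $t$ with constant $\sup\{|H(z,p)|:|p|\le L,\ |z|\le\|m\|_\infty\}$, or a time modulus for $m_\varepsilon$ uniform in $\varepsilon$ established at the parabolic level by a maximum-principle argument. To be fair, the paper is silent on the same point — it applies \Cref{thm:comparison principle m} to the solutions produced by \Cref{thm:converges mass PDE regularised} without verifying their joint uniform continuity — but your proposal, having made the difficulty explicit, cannot then discharge it by arguments that presuppose its resolution; with such a time-regularity lemma added, your construction goes through and is arguably tighter than the paper's.
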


\begin{proof} We first prove the case of $m_0 \in \mathcal C^1$ and Lipschitz, and then the general case.

\smallskip

\noindent \textbf{Step 1. $m_0$ of class $C^1$ and Lipschitz.}
We can construct initial data in 1D by taking $u_0 (\rho) =  (m_0)_\rho (|\rho|) \in   L^1 (\mathbb R)\cap \mathcal C_b(\mathbb R)$ and we apply \Cref{thm:converges mass PDE regularised}. %

\noindent\textbf{Step 2. Approximation arguments}

\smallskip

\textbf{Step 2a. $m_0\in \BUC (\mathbb R^d)$.}
Then, it can be approximated from above by a decreasing sequence of Lipschitz functions $\overline m_{0,k}$ and from below by an increasing sequence of functions $\underline m_{0,k}$. We construct $\underline m_{0,k}$ as follows: it can be taken as a piecewise approximation of $m_0 - \frac 1 k$ by piecewise constant function so that the uniform distance is less that $1/2k$. The procedure is analogous for $\overline m_{0,k}$.

 By the comparison principle for $m$, the corresponding solutions are ordered $\underline m_{k} \le \overline m_{k} $,  $\underline m_{k}$ is increasing and $\overline m_k$ decreasing. Due to Dini's theorem, the pointwise limits exists and the convergence is uniform over compacts
\begin{equation*}
	\underline m = \lim_k \underline m_k \le \lim_k \overline m_k = \overline m.
\end{equation*}
By \Cref{thm:stability}, these two functions are viscosity solutions, both corresponding to initial data $m_0$. Since they are the uniform limit of uniformly continuous functions, $\overline m, \underline m$ are uniformly continuous. By the comparison principle, $\overline m = \underline m$ and can simply call this function $m$.

\textbf{Step 2b. $m_0$ Lipschitz continuous.}
If $m_0$ is not only $\BUC$ but also Lipschitz continuous, then $m_{0,k}$ are uniformly Lipschitz continuous, and due to \eqref{eq:viscosity wellposedness bound m_rho} the functions $m_k$ are also uniformly Lipschitz continuous. Hence, $m$ is Lipschitz continuous.
\end{proof}

\begin{remark}
	If we repeat the approximation argument for $m_0$ continuous, we can repeat the argument and construct two continuous  viscosity solutions $\overline m$ and $\underline m$. Since the comparison principle \Cref{thm:comparison principle m} cannot be applied, we do not know if they are the same.
\end{remark}

\begin{remark}
	Notice that the rarefaction fan solution for $u_0 = c_0 \chi_{[0,L]}$, studied in \Cref{sec:rarefaction fan u_0 square}, is continuous $(0,+\infty)\times (0,+\infty)$, therefore $m$ is differentiable and thus a classical solution of \eqref{eq:mass equation}, so the unique viscosity solution. The other solution constructed in \Cref{sec:spurious} is therefore, shown as the spurious solution.
\end{remark}

\subsection{Asymptotics for the mass}

\begin{theorem}
	\label{thm:asymptotics mass}
	Let $m_0 \in \BUC ([0,+\infty))$  be non-decreasing with $m_0(0) = 0$ and such that $(m_0)_\rho$ is compactly supported. Let us denote by
$$
G_M(\kappa)=\int \limits _{\omega_d |y|^d \le \kappa } F_M (|y|) \diff y
$$
the mass function of the selfsimilar solution with total mass $M$.
Then, the unique viscosity solution satisfies for all $\kappa_0 \ge 0$ that
	\begin{equation}
	\label{eq:asymp mass}
	\sup_{\kappa \ge \kappa_0 } \left| \dfrac{  m (t, t^{\frac 1 \alpha} \kappa ) -  G_M (\kappa)} {G_M (\kappa)}  \right| \longrightarrow 0 ,
	\qquad \text{as } \ t \to +\infty.
	\end{equation}
\end{theorem}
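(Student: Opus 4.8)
The plan is to transfer the profile convergence already proved for $u$ (\Cref{thm:asymptotics t non-increasing u_0} and its gap version of \Cref{sec:asymp in t non-increasing u_0 with gap}) to the mass by integration, and to remove the monotonicity hypothesis by squeezing $m$ between two monotone comparison solutions. First I would record the exact dictionary between the two rescalings. Writing $w(t,y)=t^{1/\alpha}u(t,t^{1/(d\alpha)}y)$ as in \eqref{eq:profile of u} and performing the change of variables $x=t^{1/(d\alpha)}y$ (so that $\diff x=t^{1/\alpha}\diff y$ and $\omega_d|x|^d=t^{1/\alpha}\omega_d|y|^d$), one obtains
\begin{equation*}
	m(t,t^{\frac 1 \alpha}\kappa)=\int_{\omega_d|y|^d\le \kappa} w(t,y)\diff y, \qquad G_M(\kappa)=\int_{\omega_d|y|^d\le \kappa} F_M(|y|)\diff y.
\end{equation*}
Thus the quantity in \eqref{eq:asymp mass} is precisely the relative error of the cumulative integrals of $w(t,\cdot)$ and $F_M$, and the denominator satisfies $G_M(\kappa)\ge G_M(\kappa_0)>0$ for $\kappa\ge\kappa_0$.

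For the upper bound I would use the radially decreasing rearrangement $u_0^\#$ of $u_0$: it is non-increasing, has the same mass $M$, and by the Hardy--Littlewood inequality its mass function dominates that of $u_0$, i.e.\ $m_0\le m_0^\#$. Since for such monotone data the characteristic solution is the unique viscosity solution, \Cref{thm:comparison principle m} gives $m\le m^\#$. Applying \Cref{thm:asymptotics t non-increasing u_0} to $u_0^\#$ and integrating the pointwise bound $|w^\#(t,y)-F_M(|y|)|\le \delta(t)F_M(|y|)$ over $\{\omega_d|y|^d\le\kappa\}$ yields, uniformly in all $\kappa>0$,
\begin{equation*}
	m(t,t^{\frac 1 \alpha}\kappa)\le m^\#(t,t^{\frac 1 \alpha}\kappa)\le (1+\delta(t))\,G_M(\kappa), \qquad \delta(t)\to 0.
\end{equation*}

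For the lower bound I would construct a monotone minorant carrying the full mass, at the price of a gap. Let $b=\sup\{\rho:m_0(\rho)=0\}$ and let $\underline m_0$ be the greatest non-decreasing concave function on $[b,+\infty)$ with $\underline m_0(b)=0$ and $\underline m_0\le m_0$; since $m_0\equiv M$ beyond $\supp u_0$, one checks that $\underline m_0$ may be taken to reach the value $M$, so that $\underline u_0=(\underline m_0)_\rho$ is a radially non-increasing datum supported in $[b,+\infty)$ of mass $M$ satisfying $\underline m_0\le m_0$. By \Cref{thm:comparison principle m}, $\underline m\le m$. As $\underline u_0$ is exactly of the type treated in \Cref{sec:data with initial gap}, the gap asymptotics of \Cref{sec:asymp in t non-increasing u_0 with gap} give that the relative error of the associated $w$ tends to $0$ uniformly on $\{\omega_d|y|^d\ge bt^{-1/\alpha}\}$. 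Integrating and splitting $\{\omega_d|y|^d\le\kappa\}$ into the rescaled gap $\{\omega_d|y|^d\le bt^{-1/\alpha}\}$ (whose $F_M$-mass vanishes as $t\to\infty$) and its complement, and using $G_M(\kappa)\ge G_M(\kappa_0)>0$, gives $\underline m(t,t^{1/\alpha}\kappa)\ge (1-o(1))\,G_M(\kappa)$ uniformly for $\kappa\ge\kappa_0$. Combined with the upper bound this proves \eqref{eq:asymp mass}.

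The main obstacle is the lower bound: justifying that the concave minorant can be chosen with the full mass $M$ (so the limit profile is $G_M$ rather than some $G_{M'}$ with $M'<M$), and verifying that the gap—harmless at the level of $u$ for $|y|\ge\delta$—does not spoil the mass relative error near $\kappa_0$. This is exactly where the restriction $\kappa\ge\kappa_0>0$ enters, since a persistent (though rescaled-vanishing) gap precludes any control of $m(t,t^{1/\alpha}\kappa)/G_M(\kappa)$ as $\kappa\to0$. The remaining ingredients—the change of variables, the rearrangement inequality $m_0\le m_0^\#$, and the passage from the $u$-profile to the cumulative mass—are routine.
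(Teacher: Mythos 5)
Your overall strategy is the same as the paper's: squeeze the viscosity solution between two radially non-increasing comparison solutions whose $u$-level asymptotics are already known (\Cref{thm:asymptotics t non-increasing u_0} and the gap variant of \Cref{sec:asymp in t non-increasing u_0 with gap}), invoke the comparison principle \Cref{thm:comparison principle m}, and transfer to the mass by integration. Your upper bound is correct and is a genuinely different (and rather elegant) construction: the paper majorises $m_0$ by the mass of the maximal-slope square function $\overline u_0=\|(m_0)_\rho\|_\infty\,\chi_{[0,\,M/\|(m_0)_\rho\|_\infty]}$, whereas you use the decreasing rearrangement $u_0^{\#}$ together with Hardy--Littlewood to get $m_0\le m_0^{\#}$; both majorants have total mass $M$, are bounded, compactly supported and non-increasing with no gap, so \Cref{thm:asymptotics t non-increasing u_0} applies and integration over $\{\omega_d|y|^d\le\kappa\}$ gives the one-sided bound uniformly in $\kappa$.

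The lower bound, however, contains a genuine error, exactly at the point you flag as the main obstacle. Anchoring a concave minorant at $b=\sup\{\rho:\ m_0(\rho)=0\}$ cannot in general produce a minorant of full mass $M$. Take $m_0(\rho)=\min\{(\rho-b)_+^2,\,M\}$, so that $u_0=(m_0)_\rho$ vanishes continuously at the left edge of its support. If $g$ is concave on $[b,+\infty)$ with $g(b)=0$, $g\le m_0$, and $g(\rho^*)>0$ for some $\rho^*$, then concavity forces $g$ to lie above its chord, $g(\rho)\ge \frac{\rho-b}{\rho^*-b}\,g(\rho^*)$ on $[b,\rho^*]$, which exceeds $(\rho-b)^2$ for $\rho$ close to $b$; hence every admissible minorant is identically zero, carries mass $0$ rather than $M$, and your squeeze degenerates to the trivial statement $m\ge 0$. (A secondary issue: a supremum of concave functions need not be concave, so ``the greatest non-decreasing concave minorant'' needs care even to be defined.) The repair is to place the gap at the \emph{far} end of the support rather than at $b$: choose $\delta$ with $m_0(\delta)=M$ and take $\underline u_0=M\chi_{[\delta,\delta+1]}$, i.e.\ $\underline m_0=0$ on $[0,\delta]$, linear with slope $M$ on $[\delta,\delta+1]$, and $\equiv M$ afterwards. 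Then $\underline m_0\le m_0$ is immediate because $m_0\equiv M$ past $\delta$, the datum has full mass $M$, and it is non-increasing up to an initial gap, so the asymptotics with gap apply; this is precisely the paper's subsolution. With that correction, either your splitting of $\{\omega_d|y|^d\le\kappa\}$ into the rescaled gap and its complement, or the paper's device of integrating the relative-error bound over the tail $[\rho,+\infty)$ (using that $\underline u$ and $U_M$ have the same total mass $M$, so bounds on $M-\underline m$ become bounds on $\underline m$), finishes the proof.
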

\begin{proof}
Let us define $M = m_0(+\infty)$. Consider the supersolution with initial datum
\begin{equation*}
	\overline m_0 (\rho) = \begin{dcases}
	\| (m_0)_\rho \|_\infty \rho & \rho < M / 	\| (m_0)_\rho \|_\infty \\
	M & \rho > M / 	\| (m_0)_\rho \|_\infty.
	\end{dcases}
\end{equation*}
and the subsolution with initial datum, for $\delta$ such that $m_0(\delta ) = M$
\begin{equation}
	\underline m_0 (\rho) = \begin{dcases}
	0 & \rho < \delta  , \\
	M(\rho - \delta) & \delta < \rho < \delta + 1 \\
	M & \rho \ge \delta + 1.
	\end{dcases}
\end{equation}
Since $\overline u_0 = (\overline m_0)_\rho$ and $\underline u_0 = (\underline m_0)_\rho$ are square functions (and therefore non-increasing up to an initial gap),  we know from \Cref{sec:asymp in t non-increasing u_0} and \Cref{sec:asymp in t non-increasing u_0 with gap} that
\begin{equation*}
	\sup_{ \omega_d |x|^d \ge \delta } \left| \frac{   \underline u (t,x)   -  U_M \left (  t,x \right ) }{U_M \left ( t,x \right )}\right|  \longrightarrow 0 ,
	\qquad
	\sup_{ x \in \mathbb R^d } \left| \frac{  \overline u (t,x)   -  U_M \left ( t,x  \right ) }{U_M \left ( t,x  \right )}\right|  \longrightarrow 0 ,  \qquad \textrm{as } t \to +\infty.
\end{equation*}
Furthermore, since we know they are given by characteristics, we can apply \Cref{thm:comparison principle characteristics} to deduce that
\begin{equation*}
	\underline u \le \overline u.
\end{equation*}
Let $\ee > 0$ be fixed.
From the previous estimate we have that, for some $t_\ee > 0$
\begin{equation*}
	(1- \ee) U_M (t, x) \le \underline u(t,x) \le \overline u (t, x) \le (1+ \ee) U_M (t, x), \qquad \forall t \ge 0, \omega_d |x|^d \ge \delta
\end{equation*}
We will write in detail only upper bounds for $\underline u$, which is more delicate due to the presence of $\delta$.
\begin{equation*}
	 (1- \ee) U_M (t, x) \le \underline u (t, x) \le (1+ \ee) U_M (t, x), \qquad \forall t \ge t_\ee, |x| \ge \delta .
\end{equation*}
We define the mass of the self-similar solution
\begin{equation*}
m_M (t, \rho) = \int_{\omega_d |x|^d \le \rho } U_M (t,x) \diff x.
\end{equation*}
Integrating from $\rho$ to $+\infty$
\begin{equation*}
	 (1 - \varepsilon) ( M  -  m_M(t, \rho) ) \le M - \underline m (t, \rho) \le (1 + \varepsilon) ( M  -  m_M(t, \rho) ).
\end{equation*}
From this, we deduce that
\begin{equation*}
	  -\ee M + (1 + \ee ) m_M(t, \rho)  \le \underline m (t, \rho) \le   \ee M + (1 - \ee ) m_M(t, \rho) .
\end{equation*}
We have that
\begin{equation*}
	-\ee  \left(  \frac{M}{m_M(t, \rho)} - 1 \right) \le \frac{\underline m (t, \rho) - m_M(t, \rho)} {m_M(t, \rho)} \le   \ee \left(  \frac{M}{m_M(t, \rho)} - 1 \right), \qquad \forall t \ge t_\ee, \rho \ge \delta.
\end{equation*}
Notice that $M \ge m_M$ so $M/m_M - 1 \ge 0$. From here on, we write the estimate in absolute value. For $\rho$ fixed $m_M(t, \rho)\to 0$, so this estimate is not very nice on its own. However, we can pass to the scaling $y = x t^{-\frac{1}{\alpha d}}$ so it rescaled volume variable
\begin{equation*}
\kappa = \rho t^{-\frac 1 {\alpha } }.
\end{equation*}
Let us look the profile of $m_M$. Going back to the definition of $U_M$, we recover by integration that
\begin{equation*}
	 m_M (t, t^{\frac 1 \alpha} \kappa) = \int \limits _{\omega_d |y|^d \le \kappa} F_M (|y|) \diff y = G_M(\kappa).
\end{equation*}
Hence, in this rescaled variable we have that
\begin{equation*}
		\left| \dfrac{  \underline m (t, t^{\frac 1 \alpha} \kappa ) - G_M(\kappa)} {G_M(\kappa)} \right| \le  \ee \left( \frac{M}{G_M(\kappa)} - 1 \right) ,
		\qquad \forall t \ge t_\ee, \kappa \ge \delta t^{-\frac 1 {\alpha}} .
\end{equation*}
If we want a uniform bound, we cannot run all the range of $\kappa$, nevertheless we can fix a single $\kappa_0 > 0$ and, since $G_M (\kappa)$ is increasing $\kappa$, get
\begin{equation*}
	\left| \dfrac{ \underline m (t, t^{\frac 1 \alpha} \kappa ) - G_M(\kappa)} {G_M(\kappa)} \right| \le   \ee \left(  \frac{M}{G_M(\kappa_0)} - 1 \right) ,
\qquad \forall t \ge t_\ee, \kappa \ge \kappa_0 .
\end{equation*}
Proceeding analogously for $\overline m$ we recover the same bounds and hence we recover \eqref{eq:asymp mass}.
\end{proof}
\section{A finite-difference scheme for the mass}
\label{sec.fds}

In this section we return to the consideration of nonnegative solutions with positive nondecreasing mass function.
Since we know that the characteristics arrive from the left due to $m_\rho$ being positive, we can construct an upwind explicit scheme. We discretise the space and time variable by $	t_n = n h_t$, $\rho_j = j h_\rho$, and propose the scheme
\begin{equation}
	\frac{M_i^{k+1} - M_i^k}{h_t} + \left( \frac{M_{i}^k - M_{i-1}^k}{h_\rho} \right)^\alpha M_i^{k+1} = 0.
\end{equation}
Factoring out $M_i^{k+1}$, we get
\begin{equation}
	\left( 1 + h_t \left( \frac{M_{i}^k - M_{i-1}^k}{h_\rho} \right)^\alpha \right)M_{i}^{k+1} = M_i^k,
\end{equation}
hence, we deduce
\begin{equation}
	M_i^{k+1} = \frac{ M_i^k  }{1 + h_t \left( \frac{M_{i}^k - M_{i-1}^k}{h_\rho} \right)^\alpha}.
\end{equation}
Unfortunately, this scheme is not a monotone scheme for $\alpha < 1$, see \cite{Crandall1984}, due to the presence of the power, and hence it cannot be studied in the natural fashion proposed in \cite{Crandall1984}.

Nevertheless, we can still propose a monotone scheme, given by regularising the power $\alpha$. Including the initial and boundary conditions, one can write
\begin{equation}
	\tag{M$_\delta$}
	\label{eq:method delta}
	\begin{dcases}
			{M_j^{n+1}} = \frac{ M_j^n } { 1 +  h_t   H_\delta \left(\dfrac {M_j^n - M_{j-1}^n}{h_\rho} \right)  } & \text{if } j>0, n \ge 0 \\
			M_j^0 = m_0(h_\rho j) & \text{if } j \ge  0 \\
			M_0^n = 0 & \text{if }n > 0,
	\end{dcases}
\end{equation}
where
\begin{equation*}
	H_\delta (s) = (s_+ + \delta)^\alpha - \delta^\alpha.
\end{equation*}
Now, we can set a CFL condition such that the method is monotone. Indeed, if we assume that
\begin{equation}
		\tag{CFL$_\delta$}
		\label{eq:CFL delta}
		 \frac{h_t}{h_\rho} < \frac {\delta ^{1-\alpha }} { \alpha \overline M   },
\end{equation}
then the method is monotone if $M_j^n \in [0, \overline M]$.
Naturally, $\delta$ must go to zero as $h_t$ and $h_\rho$. It is easy to see that, if $m_0$ is non-decreasing in $\rho$ then so is $M_j^{n}$ in $j$.

\begin{theorem}
	\label{thm:convergence}
	Let $m_0$ be non-negative, non-decreasing, Lipschitz continuous and bounded, $M_j^n$ be the solution of \eqref{eq:method delta} and $m$ the viscosity solution of \eqref{eq:mass equation}. Let, for $\delta > 0$
	\begin{equation*}
		h_\rho = \delta^{{1 + 2\alpha}}  , \qquad h_t =\frac { \delta^{2+\alpha}} {2\alpha \| m_0 \|_\infty  }.
	\end{equation*}
	Then, for any $T > 0$
	\begin{equation*}
		\sup_{ \substack{ j \ge 0 \\ 0 \le n \le T / h_t}} |m(t_n, \rho_j) - M_j^n| \le C \delta^\alpha.
	\end{equation*}
	where $C = C(\alpha, T, \| m_0 \|_\infty, \|(m_0)_\rho\|_\infty)$. Hence, as $\delta \to 0$, the scheme (M$_\delta$) converges to the viscosity solution of \eqref{eq:mass equation}.
\end{theorem}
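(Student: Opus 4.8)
The plan is to follow the Crandall--Lions framework for monotone finite-difference schemes \cite{Crandall1984}, reusing the doubling-of-variables machinery already developed for \Cref{thm:comparison principle m}. I would split the argument into three stages: structural properties of the scheme, a consistency estimate that isolates the three error sources ($h_t$, $h_\rho$, and the regularisation $\delta^\alpha$), and the passage from consistency plus monotonicity to the $L^\infty$ rate. \textbf{Step 1 (scheme properties).} First I would record that, under \eqref{eq:CFL delta} with $\overline M = \|m_0\|_\infty$, the update $(S[M])_j = M_j/(1 + h_t H_\delta(D^- M_j))$, where $D^-M_j = (M_j - M_{j-1})/h_\rho$, is order-preserving: the dependence on $M_{j-1}$ is monotone because $H_\delta$ is non-decreasing, while the $M_j$-derivative is non-negative precisely when $\frac{h_t}{h_\rho} M_j H_\delta'(D^-M_j) \le 1$, which is guaranteed by \eqref{eq:CFL delta} since $H_\delta'(s) \le \alpha \delta^{\alpha-1}$. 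For the prescribed scalings one computes $h_t/h_\rho = \delta^{1-\alpha}/(2\alpha\|m_0\|_\infty)$, so \eqref{eq:CFL delta} holds with a factor-two margin. Because $0 \le M_j^{n+1} \le M_j^n$, the bound $M_j^n \in [0,\|m_0\|_\infty]$ propagates and the CFL hypothesis is self-consistent; monotonicity in $j$ is likewise preserved, matching $u = m_\rho \ge 0$, and the boundary values $M_0^n = 0$ match $m(t,0)=0$. Monotonicity of $S$ yields a discrete comparison principle, the discrete counterpart of \Cref{thm:comparison principle m}.

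\textbf{Step 2 (consistency).} For $\varphi \in C^2$ I would expand
\begin{equation*}
	\frac{\varphi(t+h_t,\rho)-\varphi(t,\rho)}{h_t} + H_\delta\!\left( \frac{\varphi(t,\rho)-\varphi(t,\rho-h_\rho)}{h_\rho}\right)\varphi(t+h_t,\rho)
\end{equation*}
about $(t,\rho)$. Taylor expansion recovers $\varphi_t + (\varphi_\rho)_+^\alpha \varphi$ plus a time error $O(h_t\|\varphi_{tt}\|_\infty)$, an upwind space error $O(h_\rho\|\varphi_{\rho\rho}\|_\infty)$ fed into the Hölder-continuous $H_\delta$, and the regularisation error. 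The decisive nonlinear estimate is that $\tau\mapsto \tau_+^\alpha$ is $\alpha$-Hölder with constant $1$, so $|H_\delta(s) - s_+^\alpha| \le |(s_++\delta)^\alpha - s_+^\alpha| + \delta^\alpha \le 2\delta^\alpha$; this is exactly what pins the final rate to $\delta^\alpha$ and explains why the power in \eqref{eq:method delta} was regularised.

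\textbf{Step 3 (error estimate).} Here I would double variables between the viscosity solution $m$ and the extension $M^h$ of the grid function (piecewise constant in time, interpolated in $\rho$), maximising
\begin{equation*}
	m(t,\rho) - M^h(s,\xi) - \frac{|t-s|^2 + |\rho - \xi|^2}{\varepsilon^2} - \varepsilon(\rho^2 + \xi^2) - \lambda(s+t),
\end{equation*}
exactly as in the proof of \Cref{thm:comparison principle m}, the quadratic confinement being needed because the maximising points may escape to infinity and because one must first push them into the interior $\rho,\xi \ge \mu > 0$. At the optimal point I would use the viscosity subsolution inequality for $m$ together with the discrete scheme satisfied by $M^h$ at the nearest node (not a PDE, since $M^h$ is not a viscosity solution); subtracting the two inequalities and inserting the Step 2 consistency error gives a bound combining the penalty scale $\varepsilon$, the discretisation terms $h_t$ and $h_\rho$ divided by powers of $\varepsilon$, and the regularisation $\delta^\alpha$. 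Since the prescribed scalings $h_\rho = \delta^{1+2\alpha}$ and $h_t = O(\delta^{2+\alpha})$ make $h_t,h_\rho$ of strictly higher order in $\delta$ than $\delta^\alpha$, taking $\varepsilon \sim \delta^\alpha$ subordinates every discretisation term to $\delta^\alpha$ and produces $\sup|m - M^h| \le C\delta^\alpha$ on $[0,T]$, with $C$ depending on $\alpha,T,\|m_0\|_\infty,\|(m_0)_\rho\|_\infty$.

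\textbf{Main obstacle.} The delicate point is Step 3. The equation is not of the pure form $m_t + H(m_\rho)=0$ but carries the zeroth-order product $H(m_\rho)\,m$ with $H(p)=p_+^\alpha$ only Hölder, while $m$ is merely Lipschitz, so no direct Taylor expansion of $m$ is available and the doubling must be run against the discrete scheme. Controlling the coupling term (as in the $\sigma/2$ estimate of \Cref{thm:comparison principle m}), handling the implicit level $M_j^{n+1}$ inside the discrete inequality, and verifying that the chosen powers of $\delta$ indeed absorb all discretisation errors into $\delta^\alpha$ after the $\varepsilon$-optimisation, is where the genuine work lies; the remaining steps are routine once monotonicity and consistency are in place.
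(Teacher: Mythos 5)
Your proposal is correct and follows essentially the same route the paper prescribes: the paper gives no detailed proof of \Cref{thm:convergence}, deferring explicitly to the blueprint of \cite{Crandall1984}, which is exactly the argument you outline — monotonicity of the update under \eqref{eq:CFL delta} (your factor-two margin computation for the prescribed scalings is right), consistency with the regularisation error $|H_\delta(s)-s_+^\alpha|\le\delta^\alpha$ driving the final rate, and a discrete--continuous doubling of variables modelled on \Cref{thm:comparison principle m}. The one point of divergence is that the paper says its proof ``crucially used'' the stability result of \Cref{thm:stability}, which your quantitative doubling argument never invokes; this is defensible rather than a gap, since stability under uniform limits is a qualitative tool (Barles--Souganidis style) that cannot by itself yield the stated rate $C\delta^\alpha$, so a direct quantitative estimate like yours is needed in any case.
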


The lengthy details of the proof of this result are left to the interested reader following the blueprint of \cite{Crandall1984}. They crucially used the stability property of viscosity solutions proved in \Cref{stability}.

\begin{remark}
	We have performed a numerical simulation of the case with two bumps, in order to see if the results in \Cref{fig:two-bumps-characteristics-RK} are indeed the viscosity solution. The results can be seen in \Cref{fig:two-bumps-characteristics-FD}.
\end{remark}

\begin{figure}
	\centering
	\includegraphics[width=.7\textwidth]{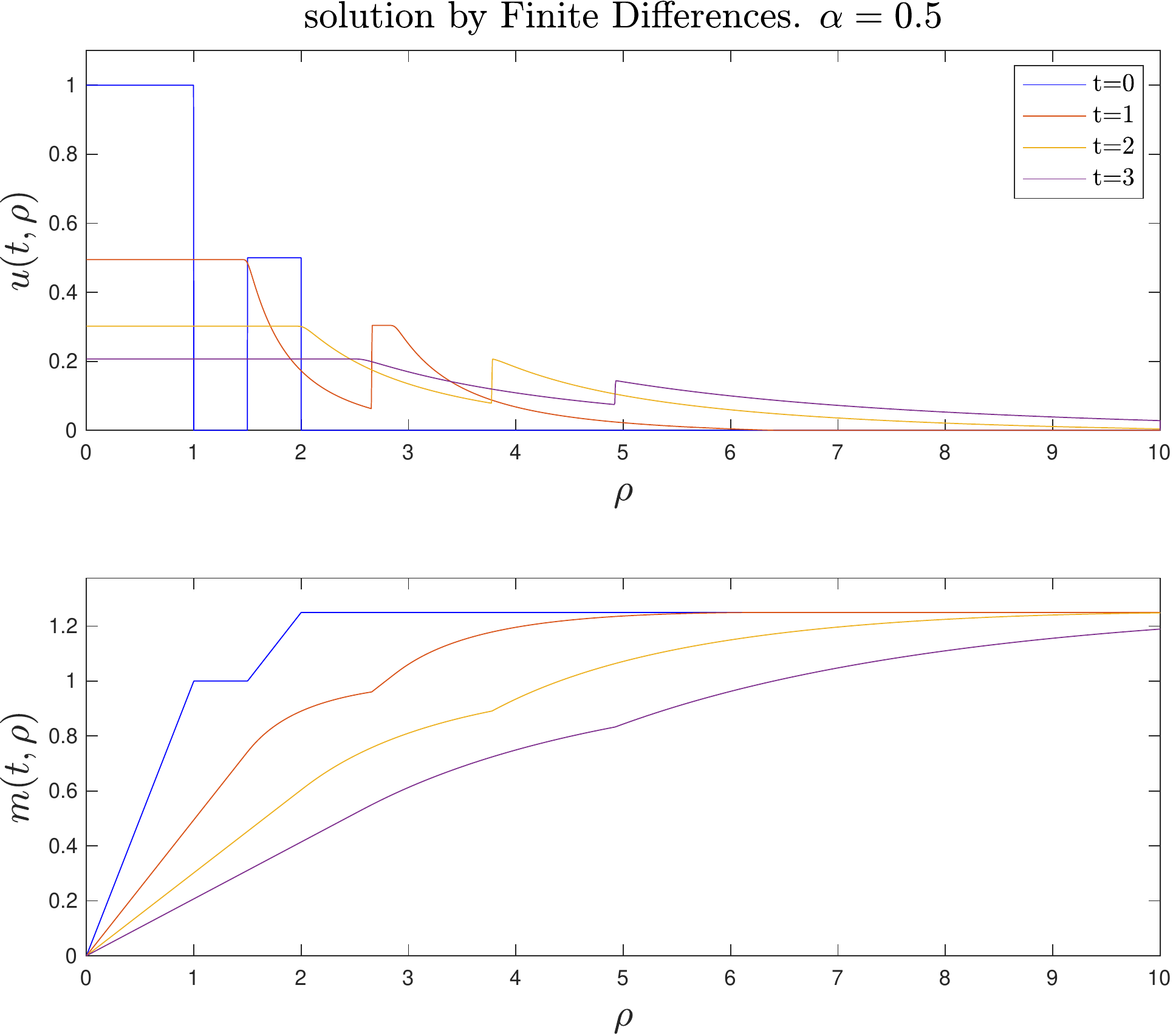}
	\caption{Solution by Finite Differences of the case with two bumps. Compare with \Cref{fig:two-bumps-characteristics-RK}.}
	\label{fig:two-bumps-characteristics-FD}
\end{figure}

\section{Extensions and open problems}
\label{sec.extop}
\begin{enumerate}
	\item The study of qualitative properties for non-radial solutions in several dimensions is completely open.
	\item The formal gradient flow structure of the equation may be relevant to discuss regularity and asymptotic properties of the equation for general initial data.	
	\item Another interesting question arises by looking at the attractive case or equivalently the backward evolution of our model. In the case $\alpha=1$, this was analyzed in \cite{BLL12} and is known in the literature as the skeleton problem.
\item Is there uniqueness of solutions for the mass equation with only continuous initial data?
	\item Can one construct convergent higher order numerical schemes for the mass equation?
\end{enumerate}

\medskip

\noindent {\bf Acknowledgments}.
JAC was partially supported by EPSRC grant number EP/P031587/1 and the Advanced Grant Nonlocal-CPD (Nonlocal PDEs for Complex Particle Dynamics:
Phase Transitions, Patterns and Synchronization) of the European Research Council Executive Agency (ERC) under the European Union’s Horizon 2020 research and innovation programme (grant agreement No. 883363).
The research of  DGC and JLV was partially supported by grant PGC2018-098440-B-I00 from the Ministerio de Ciencia, Innovación y Universidades of the Spanish Government.
JLV was an Honorary Professor at Univ.\ Complutense. DGC and JLV are grateful to Imperial College London, where most of this work was done while JLV held a Nelder fellowship.

\hyphenation{Sprin-ger Schatz-man equi-li-brium a-vaila-ble Mathe-ma-tics Me-cha-nics}

\printbibliography

\end{document}